\documentclass{amsart}

\usepackage[foot]{amsaddr}
\usepackage[numbers]{natbib}

\usepackage{lineno}
\usepackage{amsmath,amssymb}
\usepackage[utf8]{inputenc} 
\usepackage{accents}				
\usepackage{todonotes}
\usepackage{subcaption}
\usepackage{stmaryrd} 
\usepackage{amsthm} 
\usepackage[makeroom]{cancel} 
\usepackage{bookmark} 

\usepackage[margin=1in]{geometry}


\newcommand{\norm}[1]{\left\lVert#1\right\rVert}
\newcommand{\mat}[1]{\underline{\mathbf{#1}}}
\newcommand\acclrvec[1]{\accentset{\,\leftrightarrow}{#1}}	
\newcommand{\blocktensor}[1]{\acclrvec{{\mathbf #1}}}

\newcommand{\Nabla} {\vec{\nabla}}
\newcommand{\numfluxb}[1]{\hat{\mathbf{#1}} }

\newcommand\threeMatrix[1]{\underline{ #1}}
\newcommand{\ec}{{\mathrm{EC}}}






\newcommand{\bigpartialderiv}[2]{ \frac{\partial {#1}}{\partial {#2} } }

\newcommand\stateG[1]{\boldsymbol #1}


\newcommand\state[1]{\mathbf{#1}}


\newcommand{\supEuler}{{\mathrm{Euler}}}
\newcommand{\supMHD}{{\mathrm{MHD}}}
\newcommand{\supGLM}{{\mathrm{GLM}}}
\newcommand{\DG}{{\mathrm{DG}}}
\newcommand{\FV}{{\mathrm{FV}}}
\newcommand{\LLF}{{\mathrm{LLF}}}






\newcommand{\vn}{\vec{\tilde{n}}}







\newcommand{\avg}[1]{\left\{\hspace*{-3pt}\left\{#1\right\}\hspace*{-3pt}\right\}}





\def\D{\mathtt{D}}
\def\Didp{\widehat{\D}}


\def\cc{\hat{\mathtt{c}}} 
\def\mm{\mathtt{m}} 
\def\NN{\mathcal{N}} 
\def\dd{\hat{\mathtt{d}}} 


\newtheorem{proposition}{Proposition}


\title[Subcell limiting strategies for DGSEM]{Subcell Limiting Strategies for Discontinuous Galerkin Spectral Element Methods}

\author[Rueda-Ramírez]{Andrés M. Rueda-Ramírez$^{*,1}$}
\address{$^1$Department of Mathematics and Computer Science, University of Cologne, 50931 Cologne, Germany}

\author[Pazner]{Will Pazner$^2$}
\address{$^2$Center for Applied Scientific Computing, Lawrence Livermore National Laboratory, Livermore, CA, USA}

\author[Gassner]{Gregor J. Gassner$^{1,3}$}
\address{$^3$Center for Data and Simulation Science, University of Cologne, 50931 Cologne, Germany}

\begin{document}

\begin{abstract}
We present a general family of subcell limiting strategies to construct robust high-order accurate nodal discontinuous Galerkin (DG) schemes.
The main strategy is to construct compatible low order finite volume (FV) type discretizations that allow for convex blending with the high-order variant with the goal of guaranteeing additional properties, such as bounds on physical quantities and/or guaranteed entropy dissipation.
For an implementation of this main strategy, four main ingredients are identified that may be combined in a flexible manner: (i) a nodal high-order DG method on Legendre-Gauss-Lobatto nodes, (ii) a compatible robust subcell FV scheme, (iii) a convex combination strategy for the two schemes, which can be element-wise or subcell-wise, and (iv) a strategy to compute the convex blending factors, which can be either based on heuristic troubled-cell indicators, or using ideas from flux-corrected transport methods.

By carefully designing the metric terms of the subcell FV method, the resulting methods can be used on unstructured curvilinear meshes, are locally conservative, can handle strong shocks efficiently while directly guaranteeing physical bounds on quantities such as density, pressure or entropy.
We further show that it is possible to choose the four ingredients to recover existing methods such as a provably entropy dissipative subcell shock-capturing approach or a sparse invariant domain preserving approach.

We test the versatility of the presented strategies and mix and match the four ingredients to solve challenging simulation setups, such as the KPP problem (a hyperbolic conservation law with non-convex flux function), turbulent and hypersonic Euler simulations, and MHD problems featuring shocks and turbulence.
\end{abstract}

\maketitle

\makeatletter
\def\blfootnote{\xdef\@thefnmark{}\@footnotetext}
\makeatother

\blfootnote{$^*$Corresponding author}

\section{Introduction}

There is a vast literature on the construction of robust high-order numerical methods (and in particular, discontinuous Galerkin (DG) methods) for hyperbolic conservation laws and other advection-dominated problems.
A large class of such methods is based on the idea of combining, through some type of limiting procedure, the high-order discretization with a related, more robust, low-order method.
In this paper, we describe a general methodology and framework for constructing robust nodal DG spectral element methods (DGSEM) using subcell convex limiting strategies.
A number of different methods from the literature can be recast in the context of this framework, and it can also be used to demonstrate that some seemingly different methods are, in fact, equivalent.
Furthermore, this framework allows for the formulation of new classes of methods by combining, in a flexible way, different components of the limiting algorithm that we enumerate in this paper.

In this paper, we consider a number of subcell limiting strategies for DGSEM on Legendre-Gauss-Lobatto (LGL) nodes on unstructured curvilinear quadrilateral/hexahedral meshes.
Broadly speaking, there are two classes of subcell-based limiting approaches.
The first class of approaches identifies a troubled elements, and in those elements, switches over to a robust discretization on a refined subcell grid.
The robust discretizations may be based on, for example, TVD (total variation diminishing) finite volume (FV) methods \cite{Sonntag2017,Krais2019,Sonntag2017a,Sonntag2014,Gaburro2021,Dumbser2016} or can be even high-order accurate WENO-type approximations \cite{Boscheri2019,Dumbser2014}.

The second class of approaches also identifies troubled elements, but then computes a convex blending of the high-order DG scheme with a compatible low-order discretization.
In this work, we focus on this second approach.
By compatible low-order discretization, we refer to a low-order scheme that uses the same degrees of freedom (DOF) as the high-order DG method, such that a convex combination of the two methods can be formulated.
This general approach of convex blending dates back to the development of the flux-corrected transport (FCT) methods introduced by \citet{boris1976flux} in 1976, since which many variations have been developed, e.g., \cite{fct2012,lohner1987finite, kuzmin2010failsafe}.
In 1994, \citet{Giannakouros1994} formulated an FCT approach for spectral element methods for the compressible Euler equations; in that work, the authors rewrote the spectral element method as FV-type update scheme, which enabled convex blending with the low-order scheme.
Such a so-called conservative reformulation of the high-order method was also presented for spectral (very high order) methods in, e.g., \citet{Sidilkover1993}.
More recently, \citet{Fisher2013} and \citet{Carpenter2014} demonstrated that diagonal-norm summation-by-parts (SBP) operators may be equivalently reformulated as a subcell based FV-type conservative update.
Furthermore, they established a general approach to obtain entropy-consistent high-order SBP discretizations based on split-formulations of the original partial differential equations.
Since DGSEM with LGL nodes are themselves diagonal-norm SBP operators, these ideas can be applied directly to DGSEM \cite{gassner_skew_burgers}, resulting in robust entropy-stable high-order discretizations, cf.~\cite{Carpenter2014,Gassner2021,chan2018,Renac2019,ranocha2019,Bohm2018}.

\citet{vilar2019posteriori} introduced a way to re-write a DG scheme into a compatible high-order FV scheme on an arbitrary subcell distribution and used this in combination with a low-order FV approximation for shock capturing.
Shock capturing for DGSEM with LGL based on convex blending was recently proposed with an element-wise approach in \citet{Hennemann2020}.
In that work, the goal was to retain the provable entropy consistency of the hybrid scheme.
The blending coefficient is adjusted with a shock indicator such that the robust low-order method is activated only in the presence of shocks.
\citet{Rueda-Ramirez2020} extended the method to compressible magnetohydrodynamics (MHD), and showed that higher-order reconstructions can be used to enhance the accuracy of the low-order method, while retaining the property of provable entropy consistency.
More recently, \citet{Rueda-Ramirez2021} showed that the subcell FV method of \citet{Hennemann2020} can be used to preserve positivity of density and pressure for standard and split-form DGSEM discretizations of the Euler equations.

Recently, the concept of invariant domain preserving (IDP) methods has been used to formulate robust higher-order methods for conservation laws.
In 2008, \citet{Berthon2008} used the concept of invariant domain preservation to construct a second order MUSCL-type FV scheme that, for example, guarantees positivity of the solution for all times.
IDP methods have been developed in the context of continuous finite element methods in the work of \citet{Guermond2016,Guermond2017} and \citet{guermond2019invariant}.
In these works, the authors proposed to construct IDP low-order methods for higher-order nodal continuous finite element methods (FEM) by adding a so-called graph viscosity term, analogous to (local) Lax-Friedrichs (LLF) dissipation, allowing them to prove the IDP property on general unstructured grids.
\citet{Pazner2020} extended this approach to DGSEM with LGL.
Instead of using the full SEM operators to construct a low order IDP scheme analogous to the continuous FEM, a sparse, compatible low-order IDP discretization was introduced, substantially reducing the dissipation for approximations with high polynomial degrees, when compared with the full low-order graph viscosity approach.
In addition to an element-wise convex blending approach, Pazner introduced a methodology for consistently blend the high-order DGSEM on a local subcell basis for each individual LGL node within a DG element, demonstrating that such a localized blending allows for the recovery of subcell accuracy within high-order elements, even when limiting is required.

In the remainder of this paper, we describe a systematic framework for formulating methods of this type.
In Section~\ref{sec:strategies}, we enumerate four components, or ``building blocks\rlap{,}'' needed to implement the subcell based limiting.
These components may be combined in a flexible manner, allowing one to recover existing methods, as well as generate new methods.
In Section~\ref{sec:equivalence}, we use this framework to show that the sparse IDP method of \citet{Pazner2020} is equivalent to the subcell FV methods of \citet{Hennemann2020} when the standard DGSEM is used with the Rusanov or LLF numerical flux function.
In Section~\ref{sec:options}, a number of possible options and choices are listed and summarized, describing the benefits and trade-offs of each combination.
Section~\ref{sec:results} includes numerical results, in which we apply a variety of limiting strategies for a range of challenging test cases, including the KPP problem, a variety of compressible Euler test cases, and the equations of ideal MHD.
We draw our conclusions in the last section, Section \ref{sec:conclusion}.

\section{Discretization Building Blocks}\label{sec:strategies}

In this section, we introduce the four ingredients that we will use to mix and match for our limiting strategies. The goal is to construct robust high-order DG discretizations of hyperbolic conservation laws,
\begin{equation} \label{eq:ConsLaw}
\bigpartialderiv{\state{u}}{t} + \nabla \cdot \blocktensor{f}(\state{u}) = \state{0},
\end{equation}
describing the evolution of a state quantity, $\state{u}$, in time, $t$, where $\blocktensor{f}$ is a flux function.

We focus on the spatial discretization in following sections, and make use of explicit strong stability-preserving (SSP) Runge-Kutta (RK) methods (cf.~\cite{gottlieb2005high,gottlieb2011strong,shu1988efficient}) for the temporal discretization with typical explicit CFL time step restrictions.

For brevity, we present the numerical methods in two space dimensions and add references to three-dimensional extensions in the literature in cases where the extension is not straightforward.

\subsection{Ingredient (I): The High-Order Method}\label{sec:DGSEM}

The first ingredient is the choice of our high-order DG method as a collocated nodal method based on spectral element  ansatz function with Legendre-Gauss-Lobatto nodes, the DGSEM with LGL,  cf.~\cite{Kopriva:2009nx,Gassner2021,Gassner_BR1,gassner2016split}. All variables are approximated within each element by piecewise Lagrange interpolating polynomials of degree $N$ on tensor-product LGL nodes. For the conservation law \eqref{eq:ConsLaw} in two spatial dimensions, the time derivative of the state quantities at node $ij$ of a curvilinear quadrilateral element reads
\begin{align} \label{eq:DGSEM}
J_{ij} \omega_{ij} \dot{\state{u}}^{\DG}_{ij}
&=
 \omega_{j} \,\,\,\, \left(
 \state{F}^{\mathrm{Vol(1)}}_{ij}
 \,\,\,\,\,\,
+ \delta_{i0} \numfluxb{f}_{(0,L)j}
- \delta_{iN} \numfluxb{f}_{(N,R)j}
\right)
\nonumber\\
&+ \omega_{i}
\underbrace{
 \left(
 \state{F}^{\mathrm{Vol(2)}}_{ij}
 \right.
}_{\mathrm{volume~integral}}
+
\underbrace{
 \left.
  \delta_{j0} \numfluxb{f}_{i(0,L)}
- \delta_{jN} \numfluxb{f}_{i(N,R)}
 \right),
}_{\mathrm{surface~integral}}
\end{align}
where $J_{ij}$ denotes the geometry mapping Jacobian from reference space to physical space, $\vec{\xi} \in [-1,1]^2 \rightarrow \vec{x} \in \Omega_k$, $\Omega_k$ denotes the quadrilateral element $k$ under consideration, $\omega_i$ and $\omega_j$ denote the reference-space quadrature weights in $\xi^1$ and $\xi^2$, respectively, $\omega_{ij} := \omega_i \omega_j$ is simply a short-hand notation for the product of quadrature weights, $\delta_{ij}$ denotes Kronecker's delta function with node indexes $i$ and $j$, and $\numfluxb{f}$ denotes the surface numerical flux function, which are typically based on approximate Riemann solvers and thus depend on the local values and the values from the neighbor elements.

The volume integral terms for the standard, e.g., \cite{Kopriva:2009nx} and split-form DGSEM, e.g., \cite{gassner2016split} in the first coordinate direction are respectively
\begin{equation}
\state{F}^{\mathrm{Vol(1),Std}}_{ij} =
- \sum_{m=0}^{N} \bar{S}_{im} \tilde{\state{f}}^{1}_{mj},
~~~~ \mathrm{and} ~~~
\state{F}^{\mathrm{Vol(1),Split}}_{ij} =
- \sum_{m=0}^{N} S_{im} \tilde{\state{f}}^{1*}_{(i,m)j},
\end{equation}
where $\tilde{\state{f}}^1_{mj}$ is the so-called contravariant flux in the first reference coordinate direction at node $mj$, and $\tilde{\state{f}}^{1*}_{(i,m)j}$ is the first contravariant numerical volume flux evaluated between nodes $ij$ and $mj$. The numerical volume flux is a two-point numerical flux function that needs to be consistent to the continuous flux and symmetric in its two arguments.
Note that by choosing a central arithmetic two-point flux, the split-form DGSEM reduces to the standard DGSEM \cite{gassner2016split}.

The volume terms in the other coordinate directions are defined in an analogous way.

The volume integral matrices are defined as
\begin{equation}
    \bar{\mat{S}} = \mat{Q} - \mat{B},
    ~~~~~
    {\mat{S}} = 2 \mat{Q} - \mat{B},
\end{equation}
with $Q_{ij} := \omega_i \ell'_j(\xi_i)$ the SBP derivative matrix, defined in terms of the Lagrange interpolating polynomials, $\{ \ell_i \}_{i=0}^N$, and $\mat{B} := \text{diag} (-1, 0, \ldots, 0, 1)$ the so-called boundary evaluation matrix.

The contravariant fluxes are defined using the Jacobian and the contravariant basis vectors of the element mapping, $\vec{a}^m_{ij} := \Nabla \xi^m$, as
\begin{equation}
    \tilde{\state{f}}^1_{ij} = \sum_{m=1}^2 \left( J{a}^1_m \right)_{ij} \, \state{f}^m_{ij},
    ~~~~~
    \tilde{\state{f}}^2_{ij} = \sum_{m=1}^2 \left(J{a}^2_m \right)_{ij} \, \state{f}^m_{ij},
\end{equation}
and the volume numerical two-point fluxes are defined with the metric terms as
\begin{align}
\tilde{\state{f}}^{1*}_{(i,m)j} &:= \blocktensor{f}^{*}(\state{u}_{ij}, \state{u}_{mj}) \cdot \avg{J\vec{a}^1}_{(i,m)j}, ~~~~~
\tilde{\state{f}}^{2*}_{i(j,m)} := \blocktensor{f}^{*}(\state{u}_{ij}, \state{u}_{im}) \cdot \avg{J\vec{a}^2}_{i(j,m)},
\end{align}
where $\avg{\cdot}_{(i,m)j}$ denotes the average operator between nodes $ij$ and $mj$. As mentioned, $\blocktensor{f}^*(\cdot,\cdot)$ is a two-point flux function and its choice leads to possible desirable properties of the final DG discretization. There are choices of two-point numerical volume fluxes that give kinetic energy preservation \cite{gassner2016split}, entropy conservation/dissipation \cite{ismail2009affordable,Chandrashekar2013}, pressure equilibrium preservation \cite{shima2021preventing}, or all of these properties together \cite{ranocha2018generalised,ranocha2021preventing}.

As mentioned, the standard DGSEM can be written in the form of a split-form DGSEM using the standard average two-point fluxes,
\begin{align}
\tilde{\state{f}}^{1*}_{(i,m)j} &:=
\avg{ \blocktensor{f} \cdot J\vec{a}^1 }_{(i,m)j}, ~~~~~
\tilde{\state{f}}^{2*}_{i(j,m)} := \avg{ \blocktensor{f} \cdot J\vec{a}^2 }_{i(j,m)}.
\end{align}

Fisher and Carpenter et al.~\cite{Fisher2013a,Carpenter2014} proved the key property that diagonal norm SBP discretizations (and hence also the DGSEM considered in this work) can be rewritten in so-called \textit{flux-differencing} form,
\begin{equation} \label{eq:DGSEM_fluxDiff}
J_{ij} \dot{\state{u}}^{\DG}_{ij} =
\frac{1}{\omega_i}
\left(
  \numfluxb{f}^{\DG}_{(i-1,i)j}
- \numfluxb{f}^{\DG}_{(i,i+1)j}
\right)
+
\frac{1}{\omega_j}
\left(
  \numfluxb{f}^{\DG}_{i(j-1,j)}
- \numfluxb{f}^{\DG}_{i(j,j+1)}
\right)
,
~~~~~~
\forall i,j=0, \ldots, N,
\end{equation}
where indexes $-1$ and $N+1$ refer to the outer states: across the left and right boundaries, respectively.
Equation \eqref{eq:DGSEM_fluxDiff} implies local subelement-wise conservation due to the well-known theorem of \citet{laxwendroff}.
We define the high-order fluxes such that the boundary fluxes match with the surface numerical fluxes, and \eqref{eq:DGSEM} is recovered with the flux-differencing formula, \eqref{eq:DGSEM_fluxDiff}.
The fluxes in $\xi^1$ are then defined as
\begin{align}
\numfluxb{f}^{\DG}_{(-1,0)j}  &= \numfluxb{f}_{(0,L)j}
\\
\numfluxb{f}^{\DG}_{(i,i+1)j} &= - \sum_{l=0}^i \state{F}^{\mathrm{Vol(1)}}_{lj}, & i=0, \ldots, N-1,
\label{eq:inteFlux}\\
\numfluxb{f}^{\DG}_{(N,N+1)j} &= \numfluxb{f}_{(N,R)j},
\end{align}
and the fluxes in $\xi^2$ (and $\xi^3$) in are defined analogously.

Note that the above-defined fluxes differ slightly from those of \citet{Fisher2013a}, as we not only transform the volume integral, but also the entire discretization including surface terms. Moreover, note that the inner fluxes \eqref{eq:inteFlux} can be computed in an efficient recursive manner by also using the symmetry of the numerical volume fluxes:
\begin{align*}
\numfluxb{f}^{\DG}_{(0,1)j} &= -\state{F}^{\mathrm{Vol(1)}}_{0j},
\\
\numfluxb{f}^{\DG}_{(i,i+1)j} &= \numfluxb{f}^{\DG}_{(i-1,i)j} - \state{F}^{\mathrm{Vol}}_{ij}, & i=1, \ldots, N-1.
\end{align*}

\subsection{Ingredient (II): The Compatible Low-Order Method}\label{sec:FV}

The second ingredient is the choice of our compatible subcell FV method for the (split-form) DGSEM on LGL nodes. As in \cite{Hennemann2020,Rueda-Ramirez2020,Rueda-Ramirez2021}, we construct the compatible subcell FV method by interpreting the LGL nodal values of the state quantities as `cell-centered' values of the FV subcells (though we note that the LGL nodes themselves are not geometrically located in the center of the subcells),
\begin{equation} \label{eq:FV}
J_{ij} \dot{\state{u}}^{\FV}_{ij} =
\frac{1}{\omega_i}
\left(
  \numfluxb{f}^{\FV}_{(i-1,i)j}
- \numfluxb{f}^{\FV}_{(i,i+1)j}
\right)
+
\frac{1}{\omega_j}
\left(
  \numfluxb{f}^{\FV}_{i(j-1,j)}
- \numfluxb{f}^{\FV}_{i(j,j+1)}
\right).
\end{equation}
As is typical for FV methods, the numerical fluxes are based on approximate Riemann solvers, cf.~\cite{Toro:1999yq},
\begin{align} \label{eq:FV_fluxes}
\numfluxb{f}^{\FV}_{(i,m)j} :=&
\norm{\vn_{(i,m)j}} \numfluxb{f}^{\FV} \left(\state{u}_{ij}, \state{u}_{mj}, \ldots ; \frac{\vn_{(i,m)j}}{\norm{\vn_{(i,m)j}}} \right),
\end{align}
resulting in very robust low-order discretizations.
The approximate Riemann solvers in particular are designed to be dissipative, and thus the straightforward idea is that such a compatible (overly) dissipative low-order method can be used to make the low-dissipation high-order DG method more robust.

To enable the approximation on curvilinear meshes, the choice of the subcell normal vectors of the FV method is crucial. \citet{Hennemann2020} showed how to derive the subcell normal vectors, $\vn_{(\cdot,\cdot)}$, from the high-order flux-differencing formula \eqref{eq:DGSEM_fluxDiff} to ensure a compatible and watertight subcell FV discretization on high-order curvilinear meshes,
\begin{align} \label{eq:SubcellMetrics}
\vn_{(i,i+1)j}  &= J\vec{a}^1_{0j} + \sum_{l=0}^{i} \sum_{m=0}^{N} Q_{lm} (J\vec{a}^1)_{mj}, &
\vn_{i(j,j+1)}  &= J\vec{a}^2_{i0} + \sum_{l=0}^{j} \sum_{m=0}^{N} Q_{lm} (J\vec{a}^2)_{im}.
\end{align}
This particular choice of the subcell metrics \eqref{eq:SubcellMetrics} also ensures free-stream preservation \cite{Hennemann2020} and thus provably discrete subcell metric identities
\begin{equation} \label{eq:metric_ids_subcell}
    \frac{ \vn_{(i-1,i)j} - \vn_{(i,i+1)j} }{\omega_i}
  + \frac{ \vn_{i(j-1,j)} - \vn_{i(j,j+1)} }{\omega_j}
  = \vec{0},
\end{equation}
which are obtained by evaluating a constant solution in \eqref{eq:FV}-\eqref{eq:FV_fluxes}.

The interpretation as a classical FV method on subcell allows us to apply standard techniques to improve the fidelity of the subcell method by e.g.\ choosing accurate approximate Riemann solvers \eqref{eq:FV_fluxes} and/or by using reconstruction techniques to improve the piecewise constant solution interpretation on the subcell (see, e.g., Figure \ref{fig:FV_reconstruction}).
\begin{figure}
\centering
\includegraphics[trim=20 10 20 10 ,clip,width=0.5\linewidth]{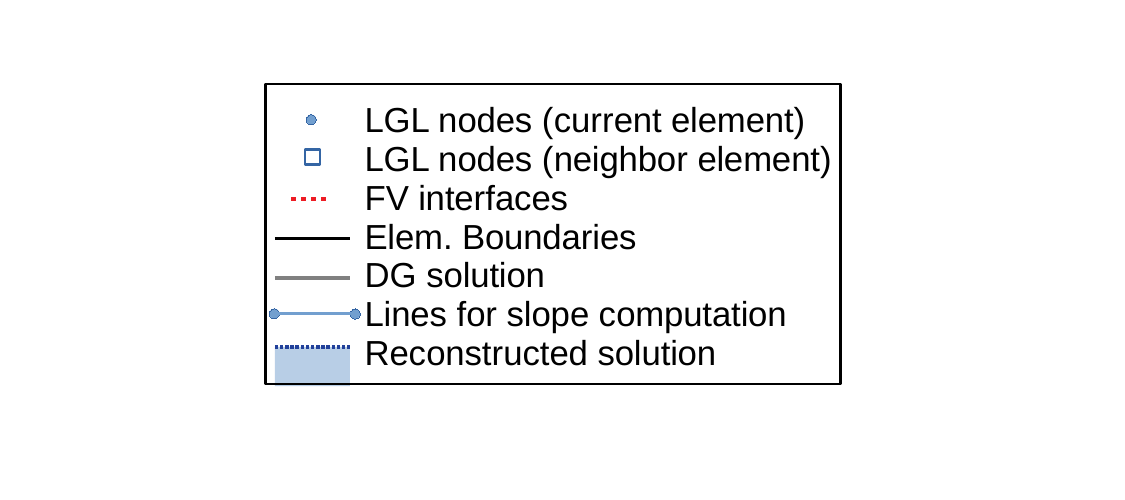}\\
\begin{subfigure}[b]{0.45\linewidth}
    \centering
	\includegraphics[trim=50 10 50 5 ,clip,width=0.75\linewidth]{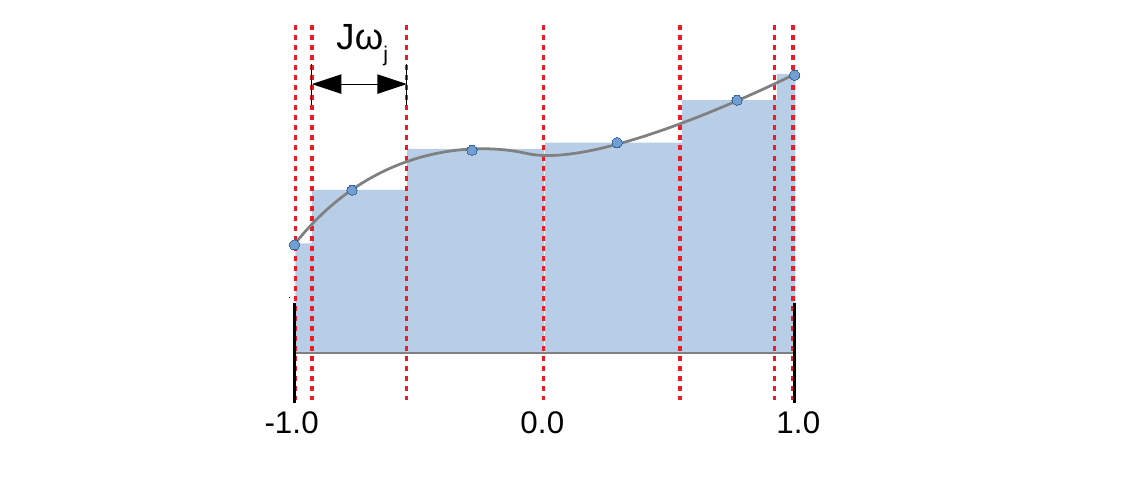}
	\caption{$1^{st}$ order reconstruction}
	\label{fig:FV_1st}
\end{subfigure}%
\begin{subfigure}[b]{0.45\linewidth}
    \centering
	\includegraphics[trim=50 10 50 5 ,clip,width=0.75\linewidth]{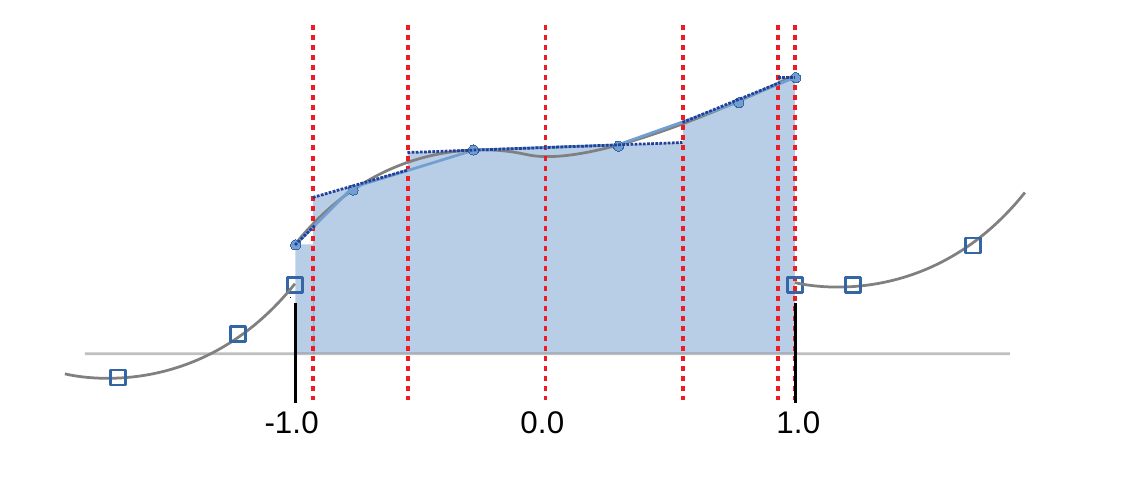}
	\caption{$2^{nd}$ order reconstruction using \textit{minmod} limiter}
	\label{fig:FV_2nd}
\end{subfigure}
\caption{Illustration of first and second-order FV-reconstruction procedures for the subcells.}
\label{fig:FV_reconstruction}
\end{figure}
So-called total variation diminishing (TVD) reconstructions based on primitive variables (e.g., $\rho$, $\vec{v}$ and $p$ for the Euler equations) can be used to enhance accuracy while complying with positivity constraints.
Moreover, TVD reconstructions based on the \textit{minmod} limiting of so-called \textit{scaled entropy variables} \cite{fjordholm2012arbitrarily} can be used to obtain higher-order entropy-stable FV schemes. As shown by \citet{Rueda-Ramirez2020}, any element-wise convex combination of such a higher-order entropy-stable FV scheme with an entropy-stable high-order DGSEM scheme remains provably entropy-stable as long as the surface terms of both schemes match.

It is clear that there are many choices and strategies to construct the low-order method, resulting in different computational complexities and numerical properties such as dissipativity, positivity preservation of physical quantities, and guaranteed entropy dissipation.

\subsection{Ingredient (III): Convex Blending Strategies}\label{sec:blending}

The third ingredient is the strategy to blend the high-order DG method from Section \ref{sec:DGSEM} with its compatible low-order FV method from Section \ref{sec:FV}.
The simplest way to blend the two schemes is to compute an \textbf{element-wise} convex combination of the spatial discretization operators, as for instance presented in \cite{Hennemann2020,Rueda-Ramirez2020,Rueda-Ramirez2021}:
\begin{equation} \label{eq:Elem_blend}
\dot{\state{u}}_{ij} =
(1-\alpha) \dot{\state{u}}_{ij}^{\DG}
+ \alpha \dot{\state{u}}_{ij}^{\FV},
\end{equation}
where $\alpha$ is an element-local blending coefficient, also known as \textit{limiting factor}.
Thus, every element has one constant value of $\alpha$ that determines the amount of FV method added to the high-order DG scheme.

Alternatively, it is possible to blend the high-order DG method with the lower-order FV method using a \textbf{subcell-wise} strategy. The subcell-wise blending was suggested by both \citet{Hennemann2020} in the context of shock capturing and \citet{Pazner2020} in the context of IDP independently, and was tested by the latter author for the combination of a standard DGSEM scheme with a low-order IDP method. The subcell-wise convex combination reads as
\begin{equation} \label{eq:Subcell_blend}
J_{ij} \dot{\state{u}}_{ij} =
\frac{1}{\omega_i}
\left(
  \numfluxb{f}_{(i-1,i)j}
- \numfluxb{f}_{(i,i+1)j}
\right)
+
\frac{1}{\omega_j}
\left(
  \numfluxb{f}_{i(j-1,j)}
- \numfluxb{f}_{i(j,j+1)}
\right),
\end{equation}
where each interface flux $\numfluxb{f}_{(\cdot,\cdot)}$ is computed as a convex combination of the DG and FV fluxes,
\begin{equation}
    \numfluxb{f}_{(a,b)} = (1-\alpha_{(a,b)}) \numfluxb{f}^{\DG}_{(a,b)} + \alpha_{(a,b)} \numfluxb{f}^{\FV}_{(a,b)},
\end{equation}
and a different blending coefficient, $\alpha_{(a,b)}$, can be selected for each interface between any two adjacent nodes $a$ and $b$. Thus, with this subcell strategy, every element has a collection of local blending factors $\alpha_{(a,b)}$ that determine locally the amount of FV mixed to the high-order DG scheme.

\subsection{Ingredient (IV): Strategies to Compute the Blending Coefficients}

The fourth and final ingredient is the computational strategy used to compute the element-wise or subcell-wise blending coefficients $\alpha$ or $\alpha_{(a,b)}$, respectively.
The goal of the blending is to retain as much of the high-order method as possible, while increasing the robustness (shock capturing, positivity of the solution, guaranteed entropy dissipation, etc.) where necessary.
In general, the blending coefficients can be computed before or after a time step (explicit RK stage) has been taken.
In this work, we will refer to the former technique as the \textit{a priori} computation and to the latter as the \textit{a posteriori} computation.

\paragraph{\textbf{The \textit{a priori} computation of the blending coefficient}} is perhaps the simplest strategy, as it only requires the evaluation of a troubled cell indicator function, e.g., \cite{Persson2006,Klockner2011}, for the element-wise convex combination strategy; or a nodal feature-based indicator function, e.g., \cite{Fernandez2018,Ciuca2020}, for the subcell-wise convex combination strategy.
This strategy was successfully used by \citet{Hennemann2020} and \citet{Rueda-Ramirez2020} in the context of entropy-stable shock capturing with element-wise hybrid FV/DG methods (which be recast in the context of the present framework) and a modification of the shock indicator of \citet{Persson2006}. The general idea is to measure the modal energy for a given indicator quantity $\epsilon$, e.g., density, or pressure. We transform the quantity $\epsilon$ from (collocated) nodal space to hierarchical Legendre space with modal coefficients $\{\hat{\epsilon}\}_{j=0}^N$. The modal energy of the highest (and the second highest) modes are compared to the total modal energy of the polynomial
\begin{equation}
\mathbb{E} = \max\left(\frac{\hat{\epsilon}_N^2}{\sum_{j=0}^{N} \hat{\epsilon}_j^2}, \frac{\hat{\epsilon}_{N-1}^2}{\sum_{j=0}^{N-1} \hat{\epsilon}_j^2}\right).
\end{equation}
The blending coefficient is then estimated as
\begin{equation}\label{eq:alpha}
\alpha = \frac{1}{1+\exp \left( \frac{-s}{\mathbb{T}}(\mathbb{E}-\mathbb{T})\right)},
\end{equation}
with $s=9.21024$, see \cite{Hennemann2020}, to obtain $\alpha(\mathbb{E}=0)=0.0001$, and
\begin{equation}
\mathbb{T}(N)=0.5 \cdot 10^{-1.8 (N+1)^{0.25}}.
\end{equation}
This blending coefficient can be further refined to increase computational efficiency of the hybrid method. For instance, one can introduce the threshold $\alpha_{max}\in(0,1]$, which limits the maximum amount of allowed FV scheme
\begin{equation}\label{eq:alpha_max}
    \alpha = \min(\alpha, \alpha_{max}).
\end{equation}
We refer to \cite{Hennemann2020,Rueda-Ramirez2020} for details and other modifications.

The two main advantages of the \textit{a priori} technique are that it only requires information from the previous time step, and that it allows for the most efficient use of the computational resources. For instance, if $\alpha = 0$ uniformly within an element, only the DG operator has to be computed, whereas if $\alpha = 1$, only the FV operator has to be computed. On the other hand, the main drawback of the \textit{a priori} approach is that it is typically a heuristic approach with user-chosen parameters.

\paragraph{\textbf{The \textit{a posteriori} computation of the blending coefficient}} is a more complicated but versatile strategy, which can be used to impose bounds to the numerical solution, e.g., positivity of the solution or general discrete maximum principles, entropy inequalities, etc.
The \textit{a posteriori} technique has its roots in the flux corrected transport (FCT) methods, e.g., \cite{boris1976flux,zalesak1979fully,lohner1987finite,kuzmin2010failsafe}, where the main idea is to start from a bounds-preserving low-order method and carefully blend in as much as possible of a less dissipative operator, which is not necessarily bounds preserving, such that blended solution is bounds-satisfying.
In the terminology of FCT methods, the application of a low-order bounds-preserving scheme is commonly known as the first or \textit{transport} stage, and the addition of a higher-order operator is commonly known as the second, \textit{antidiffusive} or \textit{flux-correcting} stage.

The \textit{a posteriori} computation requires both a strategy to compute the bounds and a method to compute the blending coefficient from those bounds.
In principle, any physically-relevant bounds can be imposed, as long as the subcell FV method delivers a solution that is within bounds.
For instance, \citet{Rueda-Ramirez2021} proposed a maximum allowable deviation from the FV solution in the context of positivity-preservation for the Euler equations.

It is also common to compute the target bounds for FCT methods from local minimum or maximum principles, e.g., \cite{kuzmin2010failsafe,lohner1987finite},
\begin{equation} \label{eq:lowStencil_bounds}
    \rho_{ij}^{\max} = \max_{k \in \NN (ij)} \rho (\state{u}^{*}_k),
    ~~~~~~
    \rho_{ij}^{\min} = \min_{k \in \NN (ij)} \rho (\state{u}^{*}_k),
\end{equation}
where $\rho(\cdot)$ denotes any quantity that needs to be bounded, $\NN (ij)$ is the low-order stencil,
\begin{equation} \label{eq:lowStencil}
    \NN (ij) := \{ ij, (i-1)j, (i+1)j, i(j-1), i(j+1) \},
\end{equation}
and $\state{u}^{*}$ can be the solution at the previous time step, i.e., $\state{u}_{ij}^{*} = \state{u}_{ij}^{n}$, or the low-order solution in the next time step, i.e., $\state{u}_{ij}^{*} = \state{u}_{ij}^{\FV,n+1}$, when the low-order method is monotonic.

Another popular alternative to compute the bounds comes from the work of \citet{Guermond2016}, who showed that a low-order method with a specific graph Laplacian viscosity operator can be written as a convex combination of auxiliary two-point states in the low-order stencil, often referred to as \textit{bar states}
\begin{equation} \label{eq:barstates}
    \bar{\state{u}}_{(a,b)} = \frac{1}{2} \left( \state{u}_a + \state{u}_b \right) + \frac{1}{2 \lambda_{(a,b)}} \left( \state{f}_a - \state{f}_b \right).
\end{equation}
In the above, $\lambda_{(a,b)}$ is the maximum wavespeed of the one-dimensional Riemann problem computed between two nodes, here $a$ and $b$, along the normal direction at the interface between those nodes.
Bounds for the maximum wavespeed can be computed efficiently using the algorithm developed in \cite{guermond2016fast}.
Moreover, \citet{Guermond2016} showed that the bar states preserve all \textit{convex invariants} of the governing hyperbolic systems (e.g. positivity and entropy conditions), and hence can be used to compute the target bounds, i.e., $\state{u}_{k}^{*} = \bar{\state{u}}_{(ij,k)}$.

Since SSP-RK methods can be written as a convex combination of forward Euler steps, we can compute the blending coefficient that guarantees the target bounds for a simple Euler evolution.
In FCT methods, an Euler step is usually rewritten as
\begin{equation} \label{eq:ForwardEuler}
 \state{u}_{ij}^{n+1} =
 \state{u}_{ij}^{\FV,n+1} +
 \sum_{k \in \NN(ij) \setminus \lbrace ij \rbrace} (1 - \alpha_{(ij,k)} ) \bar{\state{f}}_{(ij,k)},
\end{equation}
where $\state{u}_{ij}^{\FV,n+1}$ is the forward Euler increment for the low order method, and $\bar{\state{f}}_{(ij,k)}$ is the so-called mass-weighted \textit{anti-diffusive} flux between nodes $ij$ and $k$, which is defined as the high-order flux minus the low-order flux.
For instance, the anti-diffusive flux between nodes $ij$ and $(i+1)j$ reads as
\begin{equation}
    \bar{\state{f}}_{(i,i+1)j} := -
    \frac{\Delta t}{J_{ij} \omega_i}
    \left(
    \numfluxb{f}^{\DG}_{(i,i+1)j} - \numfluxb{f}^{\FV}_{(i,i+1)j}
    \right).
\end{equation}

If we blend the operators in an element-wise manner, the computation of the blending coefficient is straight-forward: every element must use the maximum blending coefficient that is determined for each of its nodes independently \cite{Rueda-Ramirez2021,Pazner2020}.
The blending coefficient for linear constraints (i.e., when we want to impose a bound on one of the state quantities) can be obtained directly from \eqref{eq:ForwardEuler} by replacing the element of $\state{u}_{ij}^{n+1}$ that needs to be bounded with its maximum and minimum values.
For nonlinear convex constraints, the blending coefficient is also obtained from \eqref{eq:ForwardEuler}, but a nonlinear problem must be solved; see \cite{Rueda-Ramirez2021} for details.
The solution of the nonlinear problems can be approximated efficiently with a Newton--bisection or Newton--secant algorithm \cite{guermond2019invariant,maier2021efficient}.

If we blend the operators in a subcell-wise manner, more sophisticated techniques are required to compute the local blending coefficient.
For instance, a Zalesak-type limiter \cite{zalesak1979fully,Pazner2020} can be used to compute a provisional blending coefficient for each node that imposes lower and upper bounds on a state quantity (here noted $\rho$),
\begin{align}\label{eq:Zalesak}
    P^+ &= \sum_{k \in \NN(ij) \setminus \lbrace ij \rbrace} \max \left(0, \bar{\state{f}}^{\rho}_{(ij,k)}\right), &
    P^- &= \sum_{k \in \NN(ij) \setminus \lbrace ij \rbrace} \min \left(0, \bar{\state{f}}^{\rho}_{(ij,k)}\right),
    \nonumber \\
    \alpha^+_{ij} &= 1 - \min \left( 1, \frac{\rho^{\max}_{ij} - \rho (\state{u}^{\FV,n+1}_{ij})}{P^+} \right), &
    \alpha^-_{ij} &= 1 - \min \left( 1, \frac{\rho^{\min}_{ij} - \rho (\state{u}^{\FV,n+1}_{ij})}{P^-} \right)
    \nonumber \\
    \tilde{\alpha}_{ij} &= \max (\alpha^-_{ij}, \alpha^+_{ij}).
\end{align}

For convex (nonlinear) constraints, the blending coefficient for each node can be obtained by solving $2d$ nonlinear problems, where $d$ is the number of dimensions of the problem \cite{guermond2019invariant,Pazner2020}.
We find provisional blending coefficients for each node, such that the solution of the provisional Euler increment,
\begin{align}\label{eq:interface_nonlin_update}
    \tilde{\state{u}}_{ij}^{n+1} =
    \state{u}_{ij}^{\FV,n+1} +
    \Gamma (1 - \tilde{\alpha}_{ij} ) \bar{\state{f}}_{(ij,k)},
    ~~~~
    \forall
    k \in \NN(ij) \setminus \lbrace ij \rbrace,
\end{align}
fulfills the bound that is being enforced.
In the above equation, $\Gamma$ is a parameter that depends on the number of dimensions of the problem.
A value of $\Gamma := 2d$ guarantees the fulfillment of the bound \cite{Pazner2020}.
The blending coefficient at each subcell interface can be then computed as the maximum between the provisional coefficients of the two nodes it connects, e.g., $\alpha_{(i,m)j} = \max (\tilde{\alpha}_{ij} ,  \tilde{\alpha}_{mj})$.

In general, the subcell-wise techniques to obtain the blending coefficient are stricter than the element-wise methods since they compute the blending coefficient for each subcell interface independently.
Therefore, they have to guarantee that the bounds are always met in each node, regardless of the blending coefficient in the other subcell interfaces.

\section{On the Hybrid Subcell FV/DG and the Sparse IDP DG Equivalence}\label{sec:equivalence}

In this section, we show that the sparse invariant domain preserving DG method proposed by \citet{Pazner2020} can be equivalently recovered within the collection or proposed subcell limiting strategies by particular choices of the ingredients.

Ingredient (I): \cite{Pazner2020} uses the high-order standard DGSEM scheme on LGL points, which clearly belongs to the present high-order framework as a special case.

Ingredient (III): \cite{Pazner2020} presents either an element-wise or an subcell-wise convex blending. It is straight forward to derive that these blending strategies directly correspond to the ones described in this actual work.

Ingredient (IV): The strategy to compute the blending coefficient used in \cite{Pazner2020} is the \textit{a posteriori} approach based on a Zalesak type limiter to ensure positivity of density and specific (physical) entropy. Furthermore, for DG schemes with high-order polynomials, Pazner added an a priori step based on an element-wise troubled cell indicator by \citet{Persson2006} to pre-assess if the high-order element really needs limiting as the bounds from the bar states of the low order IDP discretization are typically very restrictive and thus resulting in very dissipative solutions.

Ingredient (II): What remains to show is that the low-order IDP method of \cite{Pazner2020} based on local graph Laplacian viscosity is equivalent to the compatible subcell FV scheme on LGL nodes by \citet{Hennemann2020};
see Section \ref{sec:FV}.

\begin{proposition}
The subcell FV discretization on high-order curvilinear grids detailed in Section \ref{sec:FV} is equivalent to the low-order sparse invariant domain preserving scheme introduced by \citet{Pazner2020} when a first-order reconstruction and the Rusanov (also known as local Lax-Friedrichs, or short LLF) flux are used.
\end{proposition}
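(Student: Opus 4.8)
The plan is to cast both low-order schemes in the common conservative subcell form \eqref{eq:FV} and to match them interface by interface, splitting each Rusanov/LLF flux into a \emph{central} (consistent) part and a \emph{dissipation} part. With a first-order reconstruction the interface states in \eqref{eq:FV_fluxes} are simply the nodal values, so the $\xi^1$-flux becomes
\[
\numfluxb{f}^{\FV}_{(i,i+1)j} = \frac{1}{2}\left(\blocktensor{f}(\state{u}_{ij})+\blocktensor{f}(\state{u}_{(i+1)j})\right)\cdot\vn_{(i,i+1)j} - \frac{\lambda_{(i,i+1)j}\norm{\vn_{(i,i+1)j}}}{2}\left(\state{u}_{(i+1)j}-\state{u}_{ij}\right),
\]
and analogously in $\xi^2$. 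I would then show that each of the two parts reproduces the corresponding part of the sparse IDP scheme of \citet{Pazner2020}.

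The single fact that drives the whole proof — and the step I expect to be the main obstacle — is the identification of the subcell normals \eqref{eq:SubcellMetrics} with the sparse, nearest-neighbor graph coefficients of \citet{Pazner2020}. Since both schemes are built on the same standard DGSEM (Ingredient (I)), I would establish this as a purely algebraic identity for the diagonal-norm SBP operator $\mat{Q}$, showing that the cumulative row sums $\sum_{l=0}^{i}\sum_{m=0}^{N} Q_{lm}(J\vec{a}^1)_{mj}$ appearing in \eqref{eq:SubcellMetrics} coincide with the geometric coefficients of Pazner's sparse graph Laplacian; the discrete metric identities \eqref{eq:metric_ids_subcell}, equivalent to the statement that the $\vn$ at each node sum to $\vec{0}$, play the role of the partition-of-unity property enjoyed by the continuous graph coefficients.

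Granting this identification, the two parts follow. For the \textbf{central part}, I would rearrange the averaged-flux-times-normal expression above into the nodal-flux-times-coefficient form of the consistent operator in \citet{Pazner2020}, reading off the graph coefficients directly from the subcell normals; the discrete metric identities \eqref{eq:metric_ids_subcell} then ensure that the resulting coefficients sum to zero at each node, i.e.\ the consistent operator agrees on both sides and is free-stream preserving. For the \textbf{dissipation part}, collecting the LLF terms at node $ij$ from its neighbors recasts \eqref{eq:FV} as a sparse graph-Laplacian viscosity,
\[
\frac{1}{\omega_i}\sum_{k} \frac{\lambda_{(ij,k)}\norm{\vn_{(ij,k)}}}{2}\left(\state{u}_{k}-\state{u}_{ij}\right),
\]
acting only over the low-order stencil $\NN(ij)$ of \eqref{eq:lowStencil}. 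The scalar weight $\lambda_{(ij,k)}\norm{\vn_{(ij,k)}}$ is precisely Pazner's graph-viscosity coefficient, and since the wavespeed $\lambda_{(ij,k)}$ is evaluated along the same normal direction $\vn_{(ij,k)}/\norm{\vn_{(ij,k)}}$ that enters the bar states \eqref{eq:barstates}, the dissipation coefficients coincide by definition.

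It remains to treat the \textbf{boundary subcells} separately, verifying that the first and last $\xi^1$- and $\xi^2$-fluxes — which carry the extra surface contributions $J\vec{a}^1_{0j}$, $J\vec{a}^2_{i0}$ in \eqref{eq:SubcellMetrics} together with the surface numerical fluxes — are treated identically in both formulations, thereby completing the flux-by-flux equivalence.
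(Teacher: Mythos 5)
Your proposal follows essentially the same route as the paper's proof of this proposition: expand the first-order FV scheme with the LLF flux into a central part and a dissipation part, use the discrete metric identities \eqref{eq:metric_ids_subcell} to eliminate the self-contribution $\blocktensor{f}_{ij}$, exploit $\vn_{(a,b)}=-\vn_{(b,a)}$, and read off the graph-viscosity coefficient (the paper's $\dd_{(a,b)}=\norm{\cc_{(a,b)}}\lambda_{(a,b)}$ with $\cc_{(i,m)j}=-\omega_j\vn_{(i,m)j}/2$, after multiplying through by the lumped mass $\mm_{ij}=J_{ij}\omega_{ij}$), arriving at exactly the form of Pazner's sparse IDP update over the stencil $\NN(ij)$. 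The central/dissipation split, the role of \eqref{eq:metric_ids_subcell}, and the identification of the dissipation weights are all as in the paper.

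The one step where you diverge---and the step you yourself flag as the main obstacle---is the plan to prove that the cumulative row sums $\sum_{l=0}^{i}\sum_{m=0}^{N}Q_{lm}(J\vec{a}^1)_{mj}$ in \eqref{eq:SubcellMetrics} \emph{coincide}, as a purely algebraic identity, with Pazner's sparse graph coefficients. That identity is not well-posed as stated: Pazner's modified metric terms are not given in closed form, but are obtained by perturbing the contravariant basis vectors and solving an \emph{underdetermined} minimization problem (via an SVD) subject to a conservative condition, so a term-by-term match would require knowing which solution the SVD selects. The paper avoids this entirely. Its proof of the present proposition only shows that the FV--LLF scheme takes the exact structural form of Pazner's scheme with coefficients $\cc$ built from the subcell normals; a separate proposition then shows that these $\cc$ satisfy the defining conservative condition $\sum_{k\in\NN(ij)}\cc_{(ij)k}=\vec{0}$, which is just a weighted restatement of \eqref{eq:metric_ids_subcell}, so the subcell normals are an exact analytical solution of the condition that characterizes admissible modified metric terms---no SVD and no term-by-term comparison needed. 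Replacing your ``algebraic coincidence'' step by this weaker and sufficient statement closes the gap; the remainder of your outline, including the boundary subcells carrying $J\vec{a}^1_{0j}$, $J\vec{a}^2_{i0}$ and the surface numerical fluxes, matches the paper's argument.
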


\begin{proof}
Using the LLF flux and a first-order FV reconstruction (Figure \ref{fig:FV_1st}) in \eqref{eq:FV_fluxes}, the compatible FV discretization \eqref{eq:FV} reads,
\begin{align} \label{eq:FVLLF}
J_{ij} \dot{\state{u}}^{\LLF}_{ij} =&
+ \frac{1}{\omega_i}
\left[
  \frac{\vn_{(i-1,i)j}}{2} \cdot \left( \blocktensor{f}_{(i-1)j} + \blocktensor{f}_{ij} \right)
  - \frac{\norm{\vn_{(i-1,i)j}} \lambda_{(i-1,i)j}}{2} \left( \state{u}_{ij} - \state{u}_{(i-1)j} \right)
\right]
\nonumber\\
&
- \frac{1}{\omega_i}
\left[
  \frac{\vn_{(i,i+1)j}}{2} \cdot \left( \blocktensor{f}_{ij} + \blocktensor{f}_{(i+1)j} \right)
  - \frac{\norm{\vn_{(i,i+1)j}} \lambda_{(i,i+1)j}}{2} \left( \state{u}_{(i+1)j} - \state{u}_{ij} \right)
\right]
\nonumber\\
&
+ \frac{1}{\omega_j}
\left[
  \frac{\vn_{i(j-1,j)}}{2} \cdot \left( \blocktensor{f}_{i(j-1)} + \blocktensor{f}_{ij} \right)
  - \frac{\norm{\vn_{i(j-1,j)}} \lambda_{i(j-1,j)}}{2} \left( \state{u}_{ij} - \state{u}_{i(j-1)} \right)
\right]
\nonumber\\
&
- \frac{1}{\omega_j}
\left[
  \frac{\vn_{i(j,j+1)}}{2} \cdot \left( \blocktensor{f}_{ij} + \blocktensor{f}_{i(j+1)} \right)
  - \frac{\norm{\vn_{i(j,j+1)}} \lambda_{i(j,j+1)}}{2} \left( \state{u}_{i(j+1)} - \state{u}_{ij} \right)
\right],
\end{align}
where $\lambda_{(.,.)}$ is again the maximum wave speed computed between two nodes.

We now define the quantities
\begin{equation} \label{eq:PaznerOperators}
    \cc_{(i,m)j}:= -\frac{\omega_j \vn_{(i,m)j}}{2},
    ~~~~~
    \cc_{i(j,m)}:= -\frac{\omega_i \vn_{i(j,m)}}{2},
    ~~~~~
    \cc_{(ij,ij)}:= \vec{0},
    ~~~~~ \text{and} ~~
    \mm_{ij} := J_{ij} \omega_{ij}.
\end{equation}
By virtue of the discrete subcell metric identities \eqref{eq:metric_ids_subcell}, we remove all occurrences of $\blocktensor{f}_{ij}$ in \eqref{eq:FVLLF}.
Moreover, realizing that the normal vectors are uniquely defined inside each element, $\vn_{(a,b)} = - \vn_{(b,a)}$, we obtain
\begin{align} \label{eq:FVLLF2}
\mm_{ij} \dot{\state{u}}^{\LLF}_{ij} =&
+
\left[
  \cc_{(i,i-1)j} \cdot  \blocktensor{f}_{(i-1)j}
  + \norm{\cc_{(i,i-1)j}} \lambda_{(i-1,i)j} \left( \state{u}_{(i-1)j} - \state{u}_{ij} \right)
\right]
\nonumber\\
&
+
\left[
  \cc_{(i,i+1)j} \cdot  \blocktensor{f}_{(i+1)j}
  + \norm{\cc_{(i,i+1)j}} \lambda_{(i,i+1)j} \left( \state{u}_{(i+1)j} - \state{u}_{ij} \right)
\right]
\nonumber\\
&
+
\left[
  \cc_{i(j,j-1)} \cdot  \blocktensor{f}_{i(j-1)}
  + \norm{\cc_{i(j,j-1)}} \lambda_{i(j-1,j)} \left( \state{u}_{i(j-1)} - \state{u}_{ij} \right)
\right]
\nonumber\\
&
+
\left[
  \cc_{i(j,j+1)} \cdot  \blocktensor{f}_{i(j+1)}
  + \norm{\cc_{i(j,j+1)}} \lambda_{i(j,j+1)} \left( \state{u}_{i(j+1)} - \state{u}_{ij} \right)
\right].
\end{align}

Equation \eqref{eq:FVLLF2} can be further simplified by using the definition of the low-order stencil \eqref{eq:lowStencil}, defining the graph viscosity coefficients, $\dd_{(a,b)} := \norm{\cc_{(a,b)}} \lambda_{(a,b)}$, and re-indexing, $ij \rightarrow i$, to obtain
\begin{equation}
    \mm_i \dot{\state{u}}^{\LLF}_{i} = \sum_{j \in \NN(i)} \cc_{ij} \cdot \blocktensor{f}_{j} + \sum_{j \in \NN(i)} \dd_{ij} \left( \state{u}_j - \state{u}_i \right),
\end{equation}
which is exactly the sparse invariant domain preserving scheme of \citet[eq. (35)]{Pazner2020}.
\end{proof}

The factor $1/2$ in operator $\cc$ \eqref{eq:PaznerOperators} corresponds to the entries of the sparsified derivative matrix,
\begin{equation} \label{eq:didp}
     \Didp_{1D} :=
     \left(\begin{array}{ccccccc}
          -\frac{1}{2} & \frac{1}{2} & 0 & 0 & 0 & \cdots & 0 \\
          -\frac{1}{2} & 0 & \frac{1}{2} & 0 & 0 & \cdots & 0 \\
          0 & -\frac{1}{2} & 0 & \frac{1}{2} & 0 & \cdots & 0 \\
          \vdots & \vdots & \vdots & \vdots & \vdots & \ddots & \vdots
     \end{array}\right),
\end{equation}
introduced by \citet{Pazner2020} to construct the low-order IDP scheme.
Therefore, the other terms in $\cc$ must correspond to Pazner's \textit{modified metric terms} for the FV-LLF and low-order IDP schemes to be equivalent.
In \cite{Pazner2020}, the modified metric terms are computed by perturbing the contravariant basis vectors and solving a minimization problem with a singular value decomposition, such that the resulting low-order scheme fulfills a conservative condition.

\begin{proposition} \label{lemma:SubcellMetrics}
The subcell metrics \eqref{eq:SubcellMetrics} introduced by \citet{Hennemann2020} are exact analytical solutions to the conservative condition to the modified metric terms of the low-order IDP method of \citet[Section 3.1]{Pazner2020}.
\end{proposition}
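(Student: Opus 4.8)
The plan is to make precise what Pazner's \emph{conservative condition} demands of the modified metric terms, to re-express that condition in the subcell-normal notation of Section~\ref{sec:FV}, and then to check directly that the closed-form metrics \eqref{eq:SubcellMetrics} satisfy it. The key reduction is that Pazner's condition is exactly the requirement of free-stream preservation for the sparse low-order operator: a spatially constant state (hence a constant flux) must produce a vanishing nodal time derivative. Written through the operators of \eqref{eq:PaznerOperators}, and using that the self-coupling $\cc_{(ij,ij)}$ vanishes, this amounts to the linear constraint $\sum_{k \in \NN(ij)} \cc_{(ij,k)} = \vec{0}$ at every interior node $ij$. This is precisely the relation that was invoked — via the discrete metric identities — to eliminate all $\blocktensor{f}_{ij}$ contributions in passing from \eqref{eq:FVLLF} to \eqref{eq:FVLLF2}.

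First I would translate this constraint back into the subcell normals. Substituting $\cc_{(i,m)j} = -\tfrac{1}{2}\omega_j \vn_{(i,m)j}$ and $\cc_{i(j,m)} = -\tfrac{1}{2}\omega_i \vn_{i(j,m)}$, and using the antisymmetry $\vn_{(a,b)} = -\vn_{(b,a)}$ that holds because the normals are uniquely defined inside the element, the row-sum condition becomes
\begin{equation*}
\frac{\omega_j}{2}\left(\vn_{(i-1,i)j} - \vn_{(i,i+1)j}\right) + \frac{\omega_i}{2}\left(\vn_{i(j-1,j)} - \vn_{i(j,j+1)}\right) = \vec{0}.
\end{equation*}
Dividing by $\tfrac{1}{2}\omega_i\omega_j$ recovers exactly the discrete subcell metric identities \eqref{eq:metric_ids_subcell}. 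Hence Pazner's conservative condition and the subcell metric identities are the same linear system, up to the invertible rescaling encoded in \eqref{eq:PaznerOperators} and the sparsified derivative matrix \eqref{eq:didp}.

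The remaining step is then immediate. The subcell metrics \eqref{eq:SubcellMetrics} were constructed by \citet{Hennemann2020} precisely so as to satisfy \eqref{eq:metric_ids_subcell}, as already recalled in Section~\ref{sec:FV}. One can also see the mechanism explicitly: the telescoping differences $\vn_{(i-1,i)j} - \vn_{(i,i+1)j}$ collapse the cumulative sums in \eqref{eq:SubcellMetrics} to the single term $-\sum_m Q_{im}(J\vec{a}^1)_{mj}$, and analogously in the second direction, so that \eqref{eq:metric_ids_subcell} reduces to the discrete metric identities (GCL) of the mapping via the relation $Q = \text{diag}(\omega)\,D$. Therefore the $\vn$ of \eqref{eq:SubcellMetrics} solve Pazner's conservative condition exactly, which is the claim; I would additionally note that, for conservation rather than mere consistency, the antisymmetry $\cc_{(a,b)} = -\cc_{(b,a)}$ is built into \eqref{eq:PaznerOperators} through $\vn_{(a,b)} = -\vn_{(b,a)}$.

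The main obstacle is faithfully reconstructing Pazner's conservative condition from his Section~3.1, where it is posed as a minimum-norm perturbation of the contravariant basis vectors solved by a singular value decomposition, and showing that the \emph{constraint} of that least-squares problem coincides with \eqref{eq:metric_ids_subcell} — in particular separating the interior free-stream constraint from the boundary consistency that pins the outer interface normals to the physical surface normals (the seed terms $J\vec{a}^1_{0j}$ and $J\vec{a}^2_{i0}$ in \eqref{eq:SubcellMetrics}). Once that identification is secured, the verification is the short telescoping computation above, and the only point worth flagging is that \eqref{eq:SubcellMetrics} need not equal the particular minimum-norm solution returned by the SVD; the proposition asserts only that it is \emph{a} solution of the conservative condition, which is all that is required.
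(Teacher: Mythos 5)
Your proposal is correct and follows essentially the same route as the paper: identify Pazner's conservative condition as the row-sum constraint $\sum_{k \in \NN(ij)} \cc_{(ij,k)} = \vec{0}$, substitute the definitions \eqref{eq:PaznerOperators} together with the antisymmetry of the subcell normals, and recognize the result as a weighted form of the discrete subcell metric identities \eqref{eq:metric_ids_subcell}. The only difference is that you additionally carry out the telescoping verification that \eqref{eq:SubcellMetrics} satisfies \eqref{eq:metric_ids_subcell}, whereas the paper simply cites \citet{Hennemann2020} for that fact; your closing remark that \eqref{eq:SubcellMetrics} need only be \emph{a} solution of the constraint, not the minimum-norm SVD solution, is also consistent with what the proposition claims.
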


\begin{proof}
The conservative condition to the modified metric terms of the low-order IDP method can be rewritten in terms of the operator $\cc$ as \cite[Proposition 5]{Pazner2020}
\begin{equation} \label{eq:FSP_Pazner}
    \sum_{j \in \NN(i)} \cc_{ij} = \vec{0}.
\end{equation}

Inserting the definition of $\cc$ \eqref{eq:PaznerOperators} into \eqref{eq:FSP_Pazner} and re-indexing back, we recover a weighted version of the discrete subcell metric identities,
\begin{equation}
  \sum_{k \in \NN(ij)} \cc_{(ij)k} =
  \frac{\omega_{ij}}{2}
  \left(
    \frac{ \vn_{(i-1,i)j} - \vn_{(i,i+1)j} }{\omega_i}
  + \frac{ \vn_{i(j-1,j)} - \vn_{i(j,j+1)} }{\omega_j}
  \right)
  = \vec{0}.
\end{equation}

\end{proof}

By virtue of Proposition \ref{lemma:SubcellMetrics}, it is not necessary to solve an underdetermined system of equations (e.g., using a singular value decomposition) to obtain the modified metric terms.
It suffices to evaluate the exact analytical subcell metric terms as described in equation \eqref{eq:SubcellMetrics}.

\section{Construction of Subcell Limiting Strategies for DGSEM}\label{sec:options}

The strategies enumerated in Section \ref{sec:strategies} generate a family of potential methods that can be combined in a flexible manner, depending on one's specific needs.
Since in Section \ref{sec:equivalence} we further established that the entropy-stable hybrid FV/ DG methods and the sparse IDP DG schemes belong to this unified framework, it is possible to borrow ideas from one approach and transfer it to the other method.
For instance, the following options are possible and have been investigated by the authors:

\begin{enumerate}
    \item It is possible to construct sparse IDP DG methods and use split-form DG volume integral operators to enhance robustness of the high-order discretization, e.g., by guaranteeing entropy dissipation and/or kinetic energy dissipation. We will demonstrate in the numerical results section that an entropy-stable split-form high-order DG scheme ensures convergence to the unique entropic solution of the KPP problem. In general, it is further interesting to note that a robust split-form DG high-order scheme is desirable as it helps to reduce the amount of necessary blending stabilization: \citet{Rueda-Ramirez2021} showed that the split-form DGSEM in combination with the entropy-stable flux of \citet{Chandrashekar2013} and a subcell FV method is more stable and requires less stabilization (a smaller amount of low-order method) than the hybrid standard DGSEM/FV for simulations when applied to problems with under-resolved turbulence and shocks.

    \item It is possible to use hybrid FV/DG methods for shock capturing, cf.~\cite{Hennemann2020,Rueda-Ramirez2020}, with a \textbf{subcell-wise limiting} strategy, where an element local shock indicator has to be used to determine the local blending coefficients within each element. However, we remark, that guaranteed entropy dissipation of the semi-discretization has only been proven for the element-wise blending of entropy-stable DG and FV methods (in \cite{Hennemann2020} and \cite{Rueda-Ramirez2020} for the Euler and MHD equations, respectively) and that the proof for subcell-wise blending is still missing.

    \item Since the low-order sparse IDP method can be interpreted as a subcell FV scheme with LLF flux, it is possible to exchange the numerical flux function with more accurate numerical fluxes. There are a wide variety of numerical fluxes in the FV literature, some of which can enforce certain desirable properties with reduced dissipation compared to the LLF scheme.
    For instance, the HLLE \cite{Einfeldt1988} and HLLEM \cite{einfeldt1991godunov} schemes preserve positivity of density and pressure for the Euler equations of gas dynamics.
    However, it is important to compute target bounds that are consistent with the low-order method used.
    For instance, the bar states of the IDP approach are specific to the LLF-type graph Laplacian dissipation only. Hence, these bar states and their bounds do of course not directly carry over to other numerical Riemann solvers.

    \item It is possible to apply \textbf{high-resolution FV reconstruction procedures} within the convex blending strategies. For instance, a second-order reconstruction procedure (such as the one illustrated in Figure \ref{fig:FV_2nd}) on primitive variables can guarantee monotonic solutions, which can be used to enforce positivity of the solution.
\end{enumerate}

\section{Numerical experiments}\label{sec:results}

In this section, we apply various subcell limiting strategies for the high-order DGSEM scheme for different hyperbolic PDEs. We provide examples of the so-called KPP problem, the compressible Euler equations of gas dynamics, and the ideal MHD equations. The KPP problem is simulated with the open source Julia code Trixi.jl (\url{github.com/trixi-framework/Trixi.jl}), e.g., \cite{ranocha2022adaptive,schlottkelakemper2021purely} while the rest of the numerical results are generated with the open source Fortran code Fluxo (\url{github.com/project-fluxo/fluxo}). All examples make use of the third-order three-stages SSP-RK method introduced by \citet{shu1988efficient} with a typical explicit timestep determined by $\textrm{CFL}=0.5$, the element geometry, and the maximum wave speed.

\subsection{KPP Problem}

We consider the so-called KPP problem, first introduced by Kurganov, Petrova, and Popov in \cite{kurganov2007adaptive}.
This two-dimensional conservation law features a non-convex flux function, and is given by
\begin{equation} \label{eq:kpp}
    \bigpartialderiv{u}{t} + \bigpartialderiv{\sin(u)}{x} + \bigpartialderiv{\cos(u)}{y} = 0.
\end{equation}
The spatial domain is taken to be $[-2,2]^2$, and the initial condition is given by the piecewise constant function
\begin{equation} \label{eq:kpp-ic}
    u_0(x,y) = \begin{cases}
        7 \pi / 2, &\qquad x^2 + y^2 \leq 1,\\
        \pi/4, &\qquad \text{otherwise.}
    \end{cases}
\end{equation}
This problem is challenging for many high-order numerical methods, which do not correctly resolve the composite wave structure, and may converge to incorrect (nonentropic) solutions.
Even monotone methods that result in non-oscillatory, maximum-principle-satisfying solutions may converge to nonentropic solutions if they fail to satisfy discrete entropy inequalities \cite{Guermond2016,Kuzmin2021}.
Therefore, it is natural to make use of discretizations that satisfy discrete entropy inequalities to guarantee convergence of the method to the unique entropy solution.
Although DG discretizations of scalar conservation laws satisfy a cell entropy inequality for the square entropy \cite{Jiang1994}, this requires exact computation of the volume and surface integrals, which may not be feasible for the non-polynomial flux functions of \eqref{eq:kpp}.

In this section we consider the effect of using an entropy-stable DGSEM discretization with flux differencing to discretize this problem.
Consider the square entropy $U = \frac{1}{2} u^2$.
The entropy variables for this entropy are the same as the conserved variables, i.e., $v = u$.
The unique two-point entropy conservative flux is given by
\begin{equation}
    \numfluxb{f}_{\ec}(u_L, u_R) = \begin{cases}
        \frac{\stateG{\psi}_R - \stateG{\psi}_L}{v_R - v_L} &\qquad u_L \neq u_R,\\
        \state{f}(u_L) &\qquad u_L = u_R,
    \end{cases}
\end{equation}
where $\stateG{\psi}$ is the potential flux given by
\begin{equation}
    \stateG{\psi} = \begin{bmatrix}
        -\cos(u) \\
        \sin(u)
    \end{bmatrix}.
\end{equation}
The split-form DGSEM method using this two-point flux for the volume terms, and the local Lax--Friedrichs flux for surface integrals will satisfy the discrete entropy inequality, and will therefore converge to the unique entropic solution.

\begin{figure}[h!]
    \centering
    \includegraphics[trim=0 1600 1440 0 ,clip,width=0.4\linewidth]{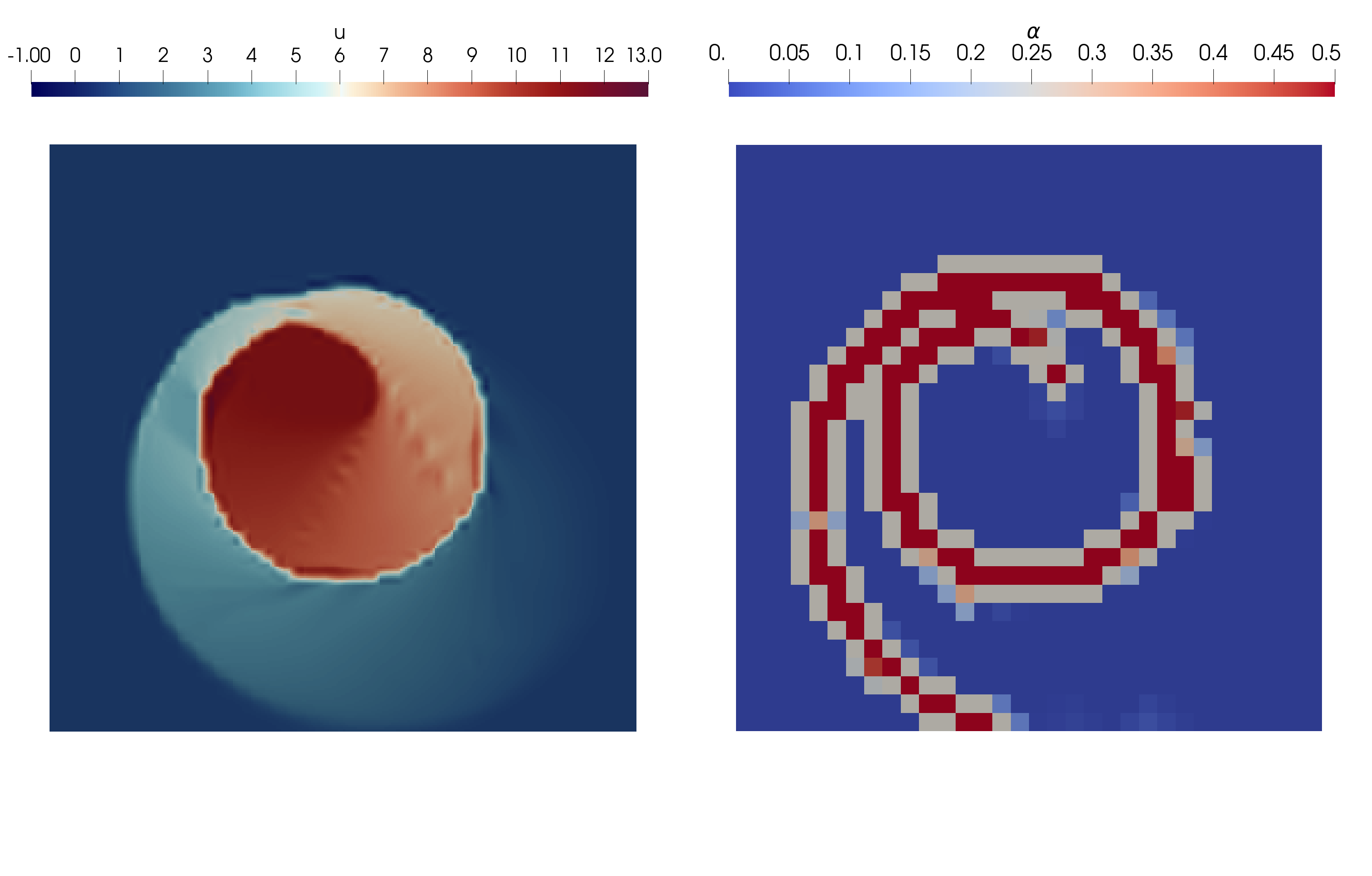}
\begin{subfigure}[b]{\linewidth}
    \centering
	\includegraphics[trim=0 300 1440 300 ,clip,width=0.28\linewidth]{figs/KPP/Standard_DG_grid_0000.png}
	\hspace{10pt}
	\includegraphics[trim=0 300 1440 300 ,clip,width=0.28\linewidth]{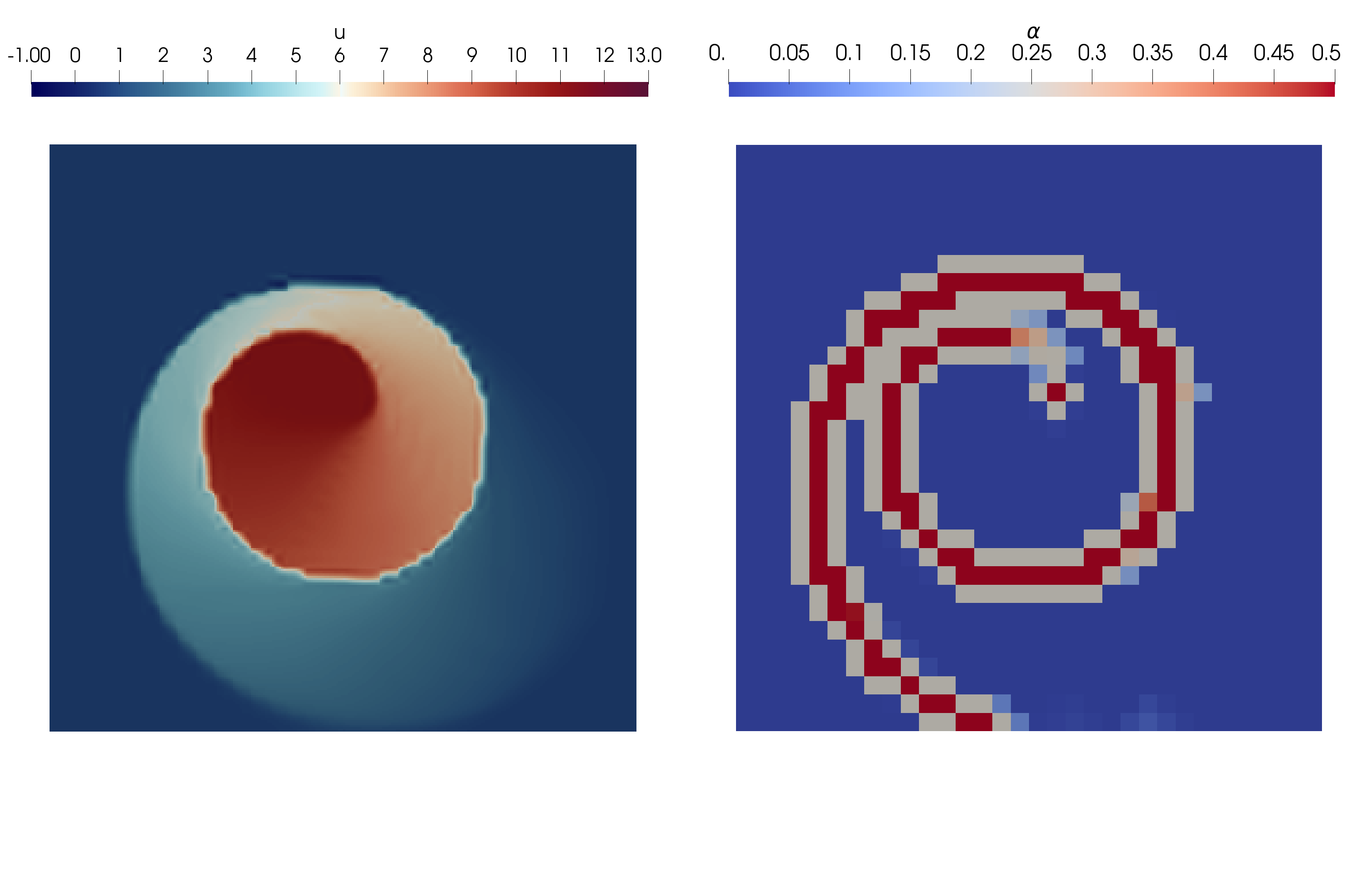}
	\caption{$32 \times 32$ elements mesh}
	\label{fig:kpp_grid0}
\end{subfigure}
\begin{subfigure}[b]{\linewidth}
    \centering
	\includegraphics[trim=0 300 1440 300 ,clip,width=0.28\linewidth]{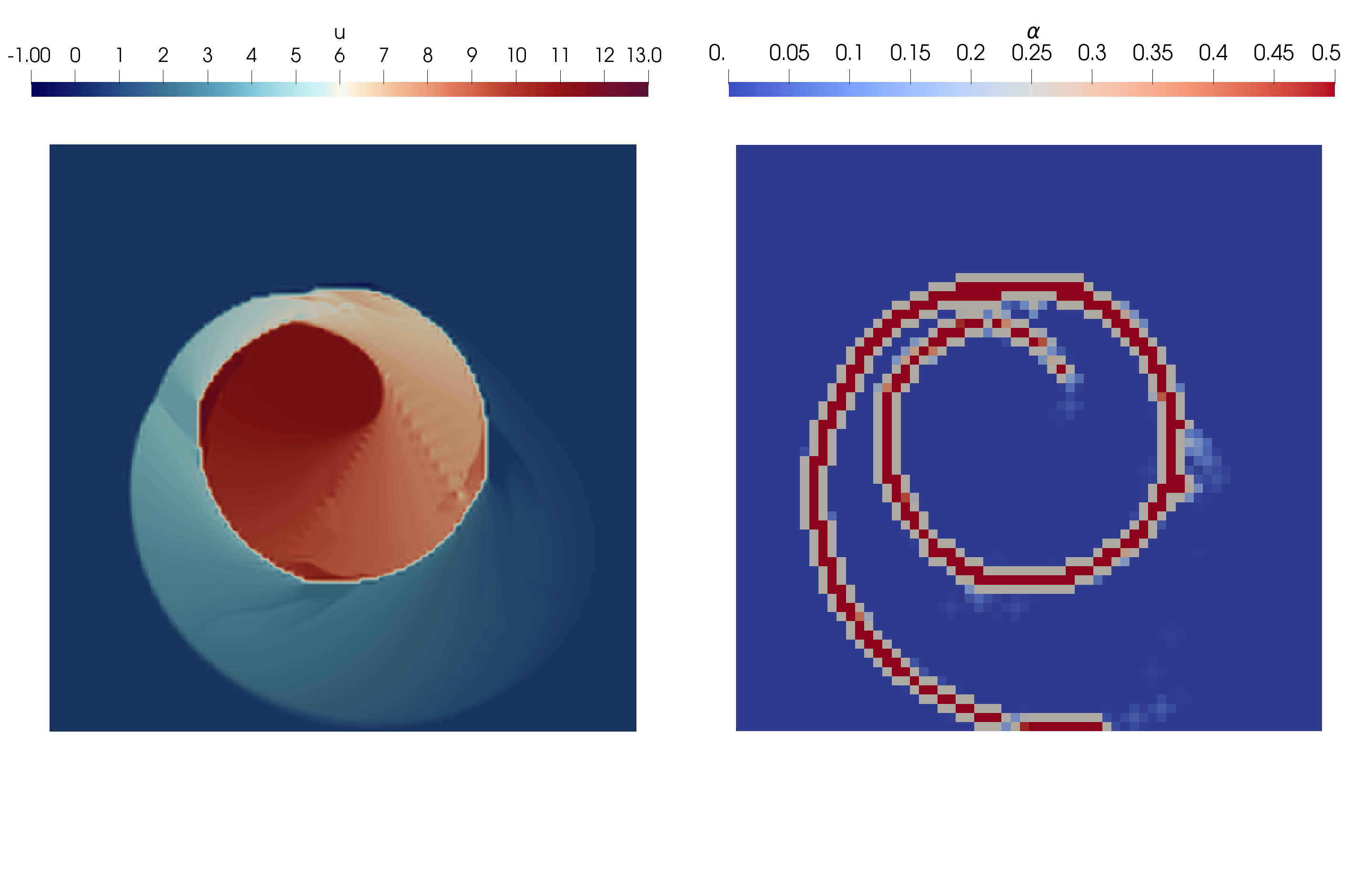}
	\hspace{10pt}
	\includegraphics[trim=0 300 1440 300 ,clip,width=0.28\linewidth]{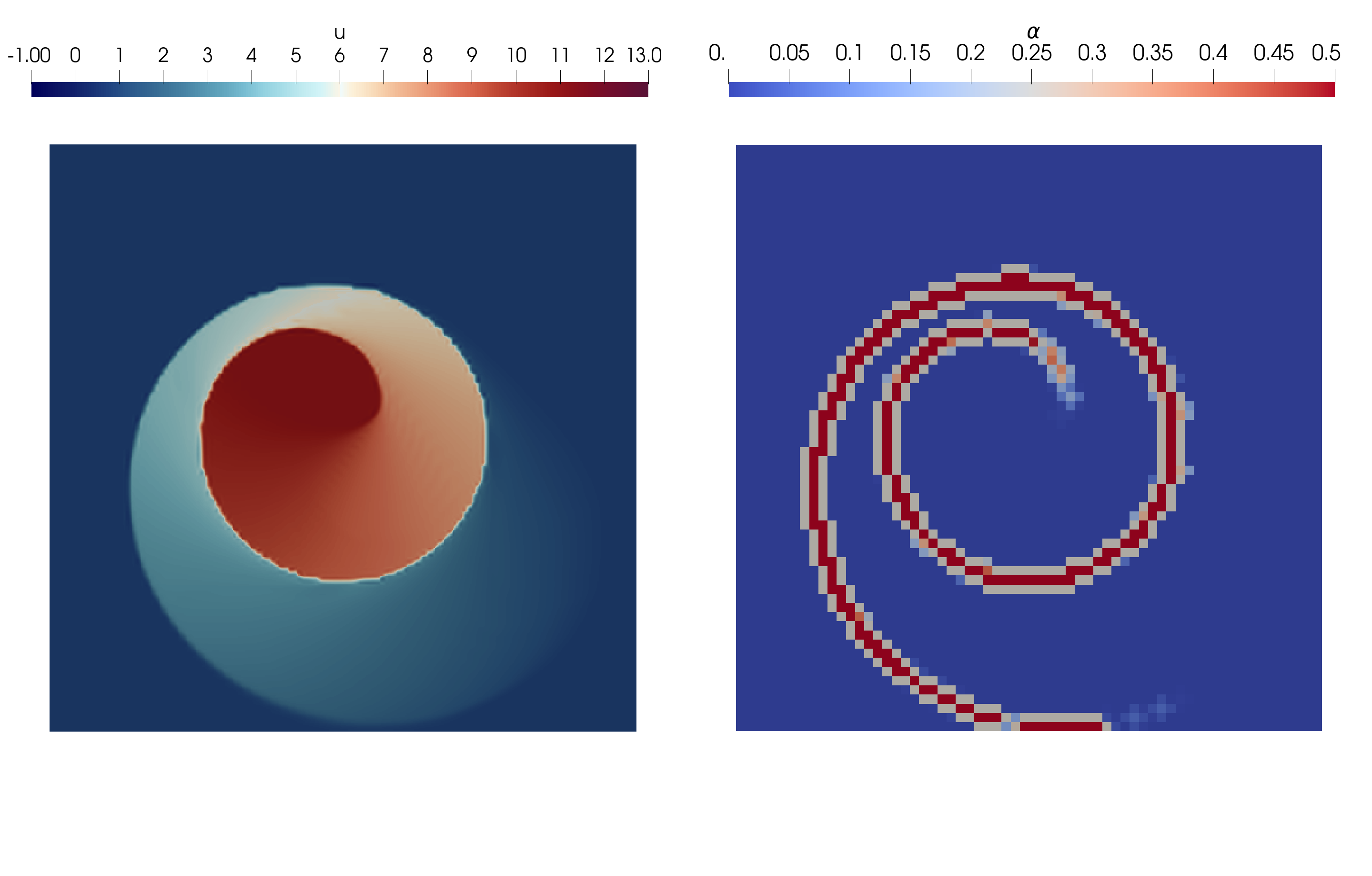}
	\caption{$64 \times 64$ elements mesh}
	\label{fig:kpp_grid1}
\end{subfigure}
\begin{subfigure}[b]{\linewidth}
    \centering
	\includegraphics[trim=0 300 1440 300 ,clip,width=0.28\linewidth]{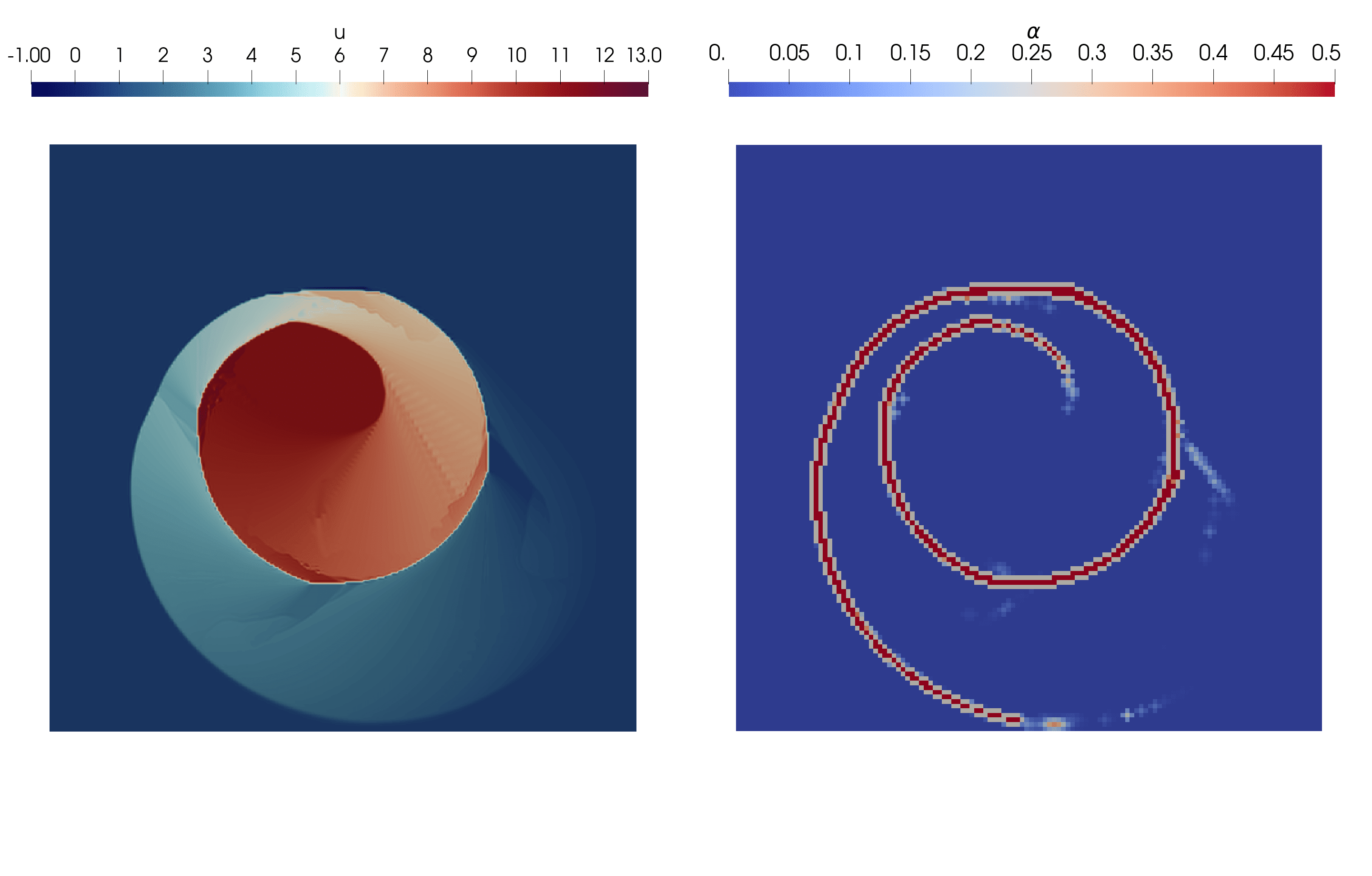}
	\hspace{10pt}
	\includegraphics[trim=0 300 1440 300 ,clip,width=0.28\linewidth]{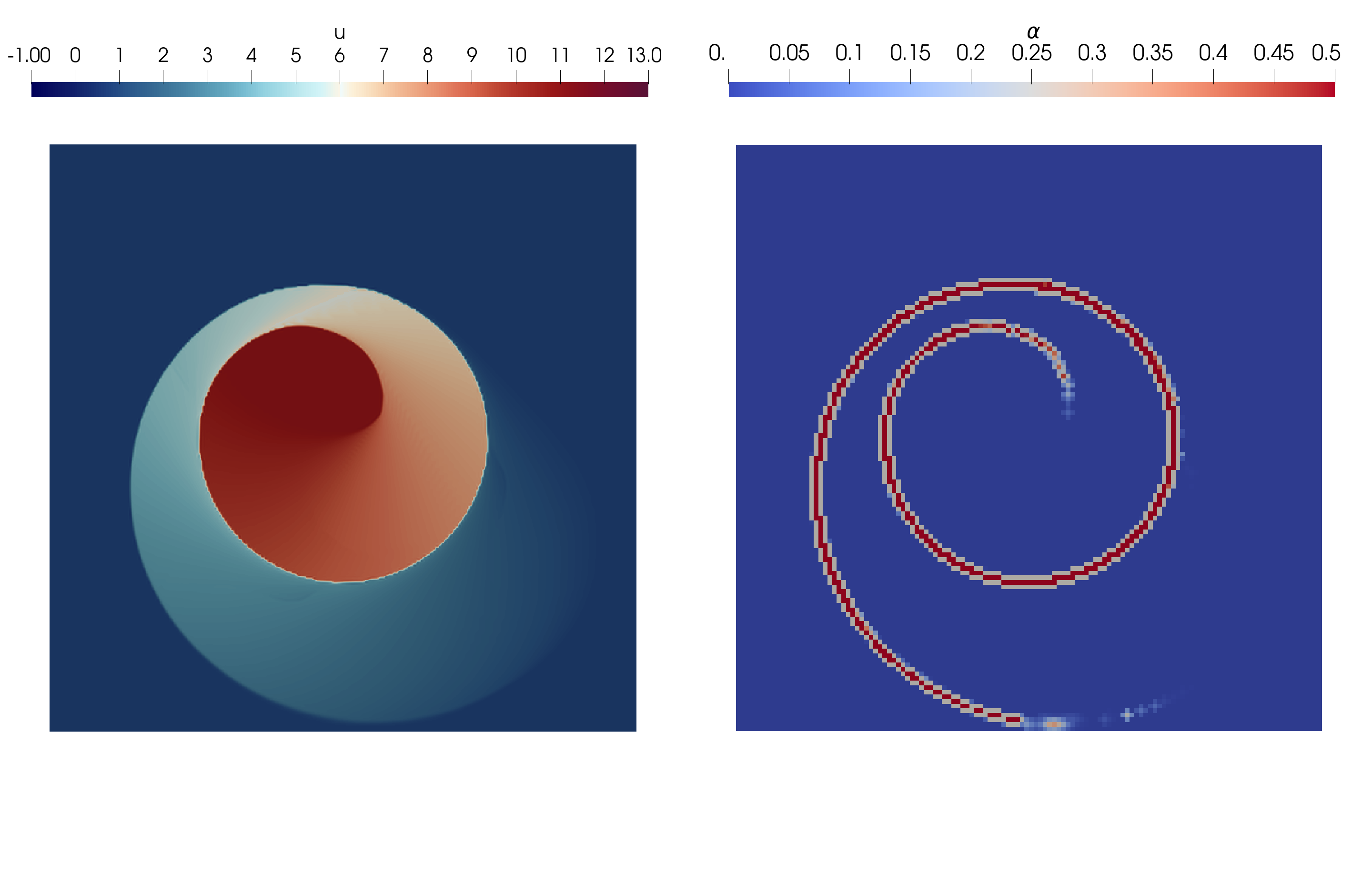}
	\caption{$128 \times 128$ elements mesh}
	\label{fig:kpp_grid2}
\end{subfigure}
\begin{subfigure}[b]{\linewidth}
    \centering
	\includegraphics[trim=0 300 1440 300 ,clip,width=0.28\linewidth]{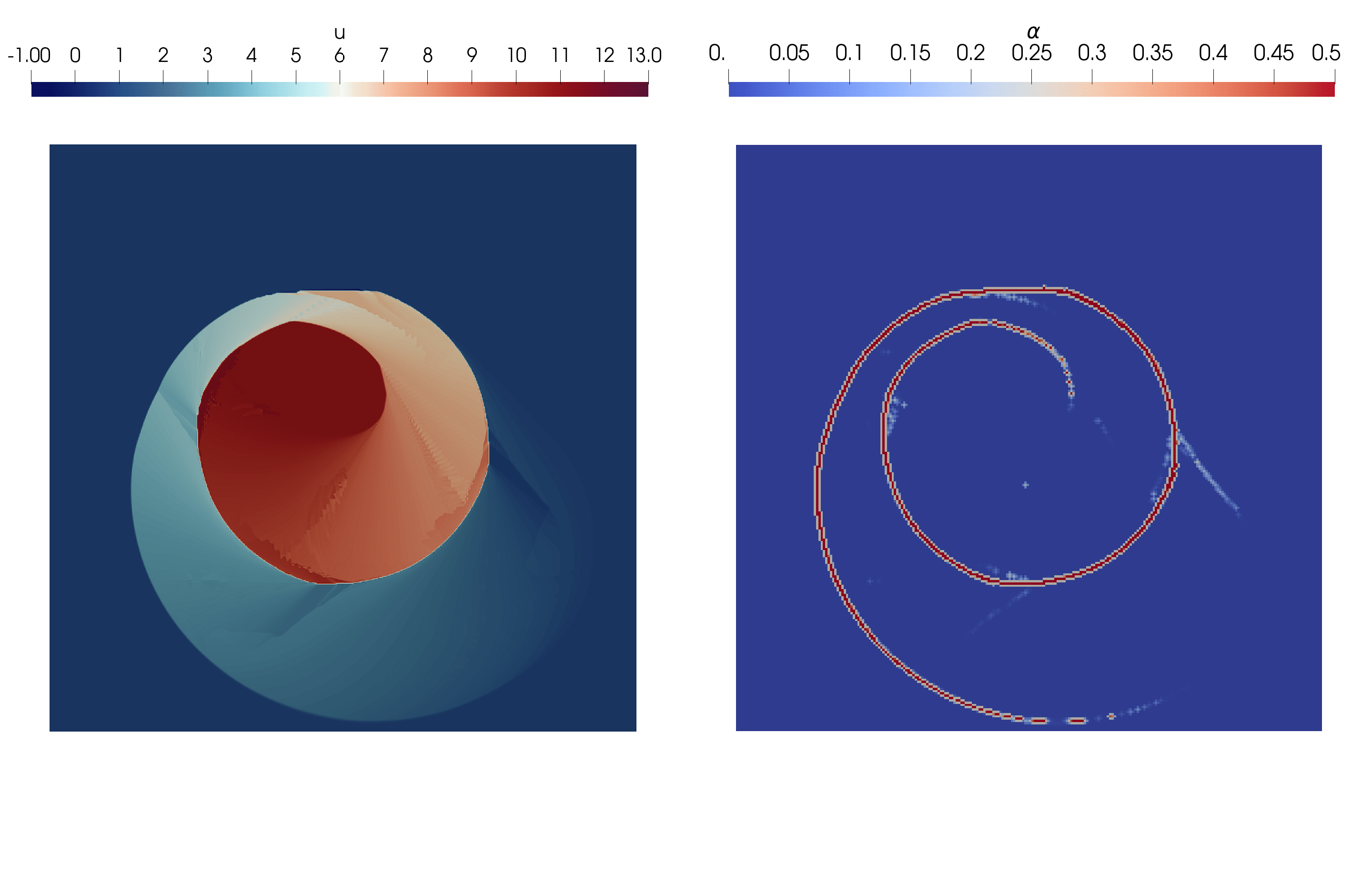}
	\hspace{10pt}
	\includegraphics[trim=0 300 1440 300 ,clip,width=0.28\linewidth]{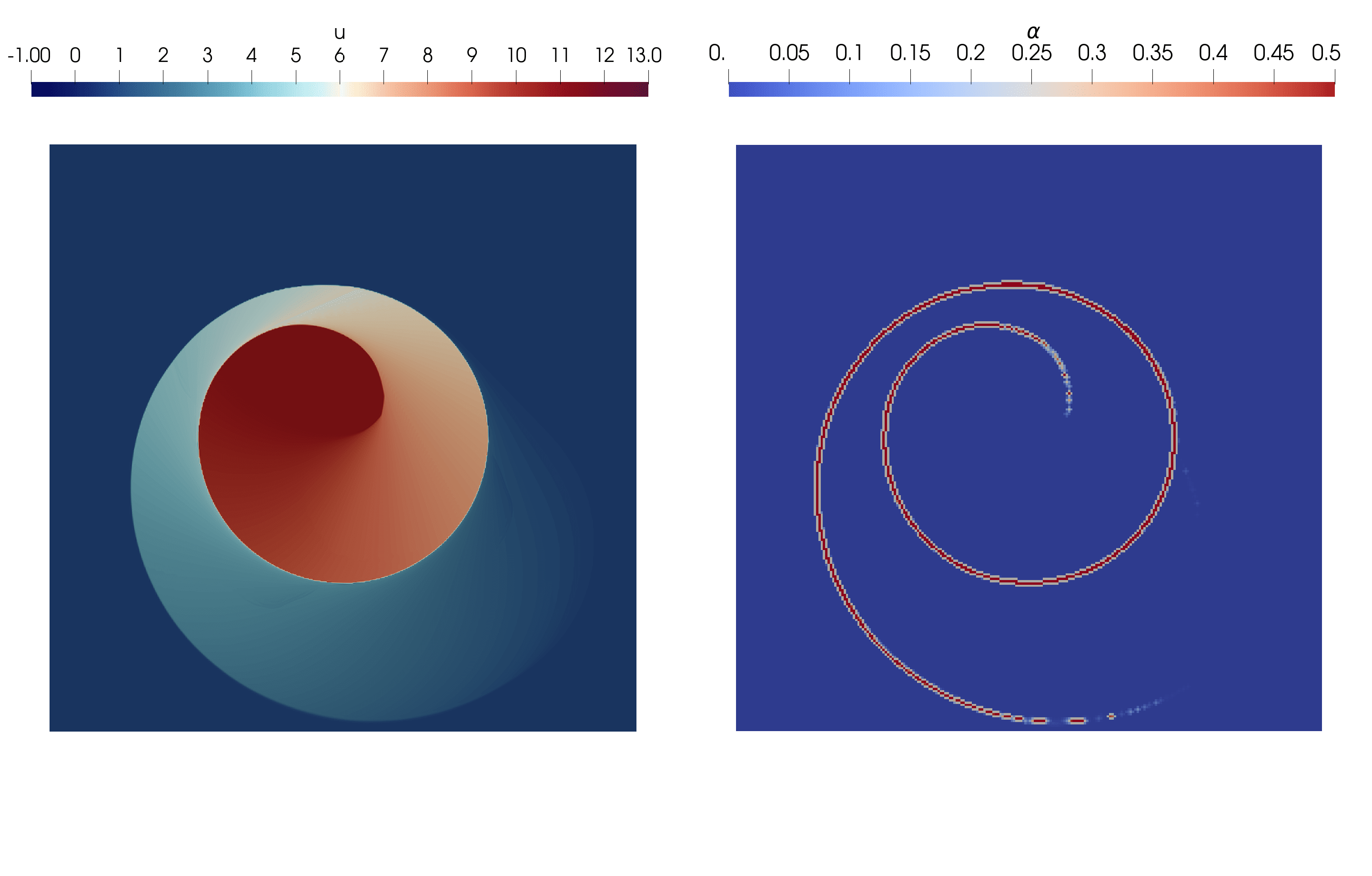}
	\caption{$256 \times 256$ elements mesh}
	\label{fig:kpp_grid3}
\end{subfigure}
    \caption{Comparison of solution to the KPP problem using the standard DGSEM method (left) and the entropy-stable DGSEM (right) for a sequence of different grids. The polynomial degree $N=3$ and the end time $t=1$. The standard DGSEM methods converges to a nonentropic solution, whereas the entropy-stable DGSEM converges to the correct entropy solution.}
    \label{fig:kpp}
\end{figure}

We discretize this problem using the DGSEM method, with the element-wise entropy-stable shock capturing scheme of \cite{Hennemann2020} based on the indicator Eq.\eqref{eq:alpha} and \eqref{eq:alpha_max} with the maximum alpha threshold $\alpha_{max} = 0.5$, allowing at most 50\% FV mixed in. The equations are integrated until $t=1$ using the initial condition given by \eqref{eq:kpp-ic}. We generate a sequence of uniform Cartesian meshes, from $32\times 32$ elements up to $256\times 256$. The polynomial degree $N=3$. The solutions obtained using the standard DGSEM method and the entropy-stable DGSEM method are shown in figure \ref{fig:kpp}. Although the element-wise shock capturing scheme is sufficient to give clean essentially nonoscillatory solutions, the standard DGSEM method, which does not satisfy a discrete entropy inequality, converges to an incorrect, nonentropic solution. The entropy-stable DGSEM method converges to the entropy solution, and correctly resolves the wave structures with sharp interfaces at the discontinuities.

Finally, figure \ref{fig:kpp_alpha} shows the distribution of the element-wise blending coefficients $\alpha$ that nicely paint along the discontinuous flow features. It is also interesting to note that erroneous wave structures from the nonentropic solution of the standard scheme trigger the shock capturing, which is not the case in the entropy-stable DGSEM result.

\begin{figure}[h!]
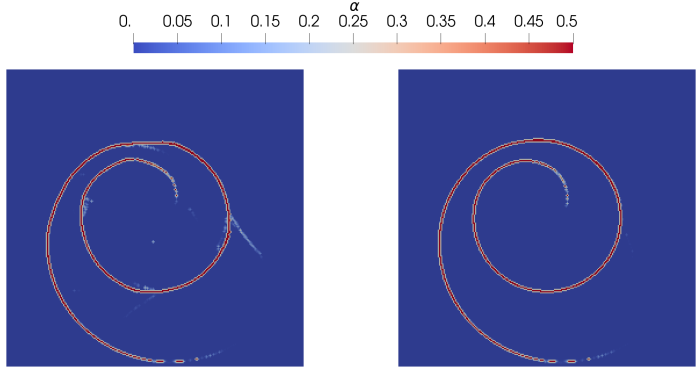

    \centering
    \includegraphics[trim=1440 1600 0 0 ,clip,width=0.4\linewidth]{figs/KPP/Standard_DG_grid_0000.png}

	\includegraphics[trim=1440 300 0 300 ,clip,width=0.28\linewidth]{figs/KPP/Standard_DG_grid_0003.png}
	\hspace{10pt}
	\includegraphics[trim=1440 300 0 300 ,clip,width=0.28\linewidth]{figs/KPP/ES_DG_grid_0003.png}

    \caption{Comparison of the blending coefficient distribution for the KPP problem using the standard DGSEM method (left) and entropy-stable DGSEM (right) using a $256 \times 256$ elements grid with polynomial degree $N=3$ at time $t=1$. The shock capturing sensor of \cite{Hennemann2020} based on the equations \eqref{eq:alpha} and \eqref{eq:alpha_max} with the maximum alpha threshold $\alpha_{max} = 0.5$ is used for the indicator quantity $u$.}
    \label{fig:kpp_alpha}
\end{figure}

\subsection{The Euler Equations of Gas Dynamics} \label{sec:NumEuler}

The Euler equations of gas dynamics can be written as a system of conservation laws, where the conserved variables are the mass, momentum and total energy per unit volume, $\state{u} = [\rho, \rho \vec{v}, \rho E]^T$.
The flux function reads
\begin{equation}
\blocktensor{f} (\mathbf{u})=
\begin{bmatrix}
\rho \vec{v} \\
\rho \vec{v} \otimes \vec{v} + \mat{I} p \\
\vec{v} (\rho E + p)
\end{bmatrix},
\end{equation}
where $\mat{I}$ is the identity matrix, the pressure is computed with the calorically perfect gas assumption,
\begin{equation} \label{eq:press}
p=(\gamma-1) \rho e,
\end{equation}
$\gamma$ is the heat capacity ratio, and $e = E - \norm{\vec{v}}^2/2$ is the internal energy.

\subsubsection{Kelvin-Helmholtz Instability}

We consider the inviscid two-dimensional Kelvin-Helmholtz instability (KHI) setup, e.g., presented by \citet{Rueda-Ramirez2021}.
This setup is very challenging for nodal high-order DG methods, as it contains severely under-resolved vortical structures with an effective Reynolds number Re $=\infty$ and a Mach number of up to Ma $\approx 0.6$.
As a result, both the standard and the split-form DGSEM methods require the use of a positivity-preserving limiter to ensure robustness.

The aim of this test is to apply the positivity technique of \citet{Rueda-Ramirez2021} in a subcell-wise manner.
More specifically, we impose lower bounds for density and pressure that depend on the FV solution,
\begin{equation} \label{eq:rhop_restriction}
\rho_{ij} \ge \beta \rho^{\FV}_{ij},
~~~~
p_{ij} \ge \beta p^{\FV}_{ij},
\end{equation}
with $\beta = 0.1$ (which is more stringent requirement than just positivity).
We compute local blending coefficients that ensure the fulfillment of condition \eqref{eq:rhop_restriction} in the entire simulation domain.
This requires the use of a one-sided Zalesak-type limiter for the density limiting (i.e., only $\alpha^-$ is computed in \eqref{eq:Zalesak}) and the solution of interface non-linear equations for the pressure limiting (as in \eqref{eq:interface_nonlin_update}).

The initial condition is given by
\begin{align} \label{eq:KHI_IniCond}
\rho_0 (x,y) &= \frac{1}{2}
+ \frac{3}{4} B,
&
p_0 (x,y) &= 1,
\nonumber \\
v_{1,0} (x,y) &= \frac{1}{2} \left( B-1 \right),
&
v_{2,0} (x,y)) &= \frac{1}{10} \sin(2 \pi x),
\end{align}
with
$
B=
\tanh \left( 15 y + 7.5 \right) - \tanh(15y-7.5).
$

The simulation domain, $[-1,1]^2$, is equipped with periodic boundary conditions.
We tessellate the domain using $64 \times 64$ quadrilateral elements, represent the solution with polynomials of degree $N=7$, and run the simulation until the final time $t=10$.
We discretize the Euler equations using the split-form DGSEM, the entropy-conserving and kinetic energy preserving flux of \citet{Chandrashekar2013} for the volume numerical fluxes, $\state{f}^*$, and the traditional Rusanov scheme for the surface numerical fluxes, $\numfluxb{f}$.

\begin{figure}[htb]
\centering

\begin{subfigure}[b]{0.48\linewidth}
    \centering
 	\includegraphics[trim=0 0 0 0 ,clip,width=\linewidth]{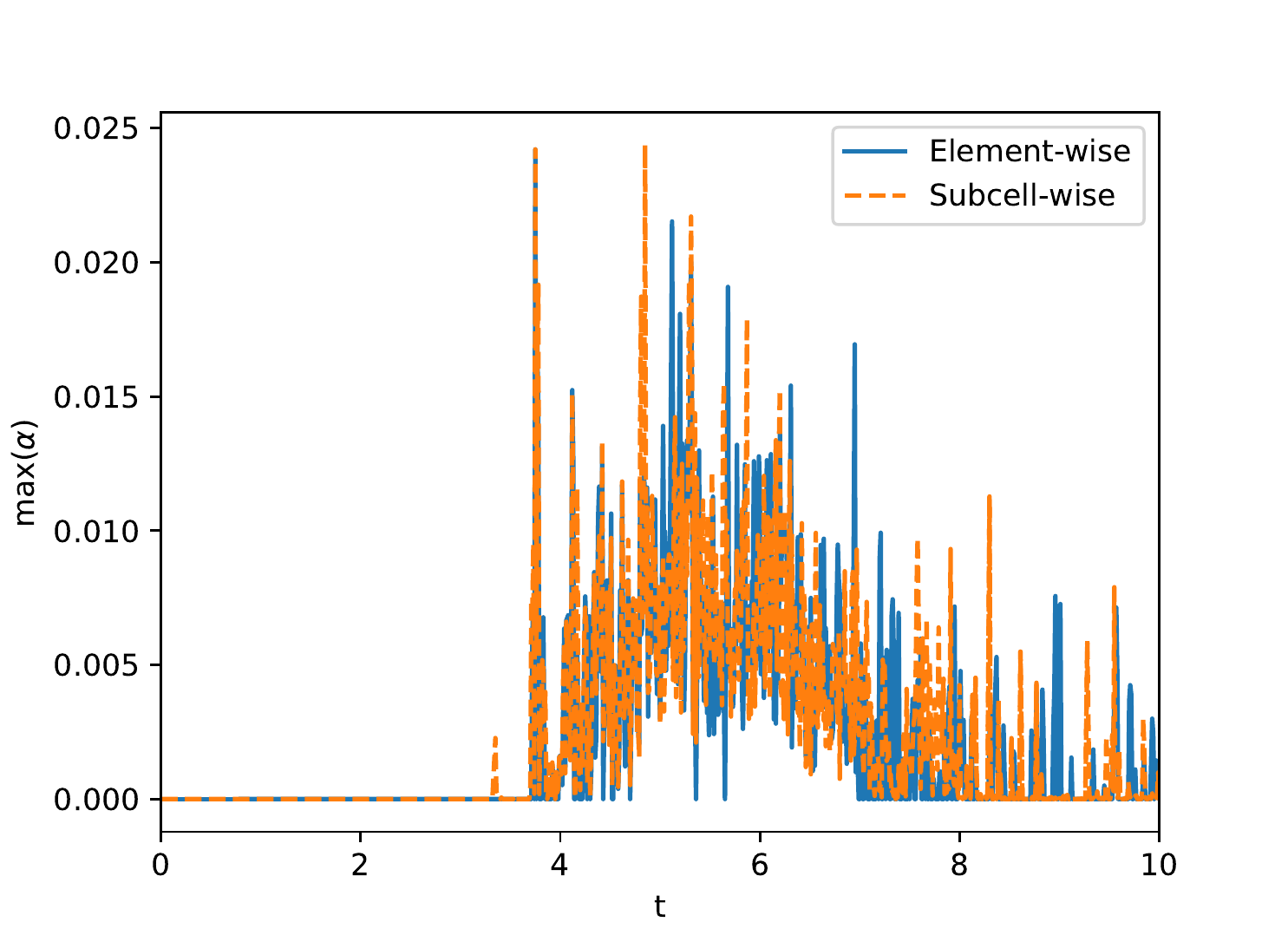}
	\caption{Maximum blending coefficient}
	\label{fig:khi_alphamax}
\end{subfigure}
\begin{subfigure}[b]{0.48\linewidth}
    \centering
 	\includegraphics[trim=0 0 0 0 ,clip,width=\linewidth]{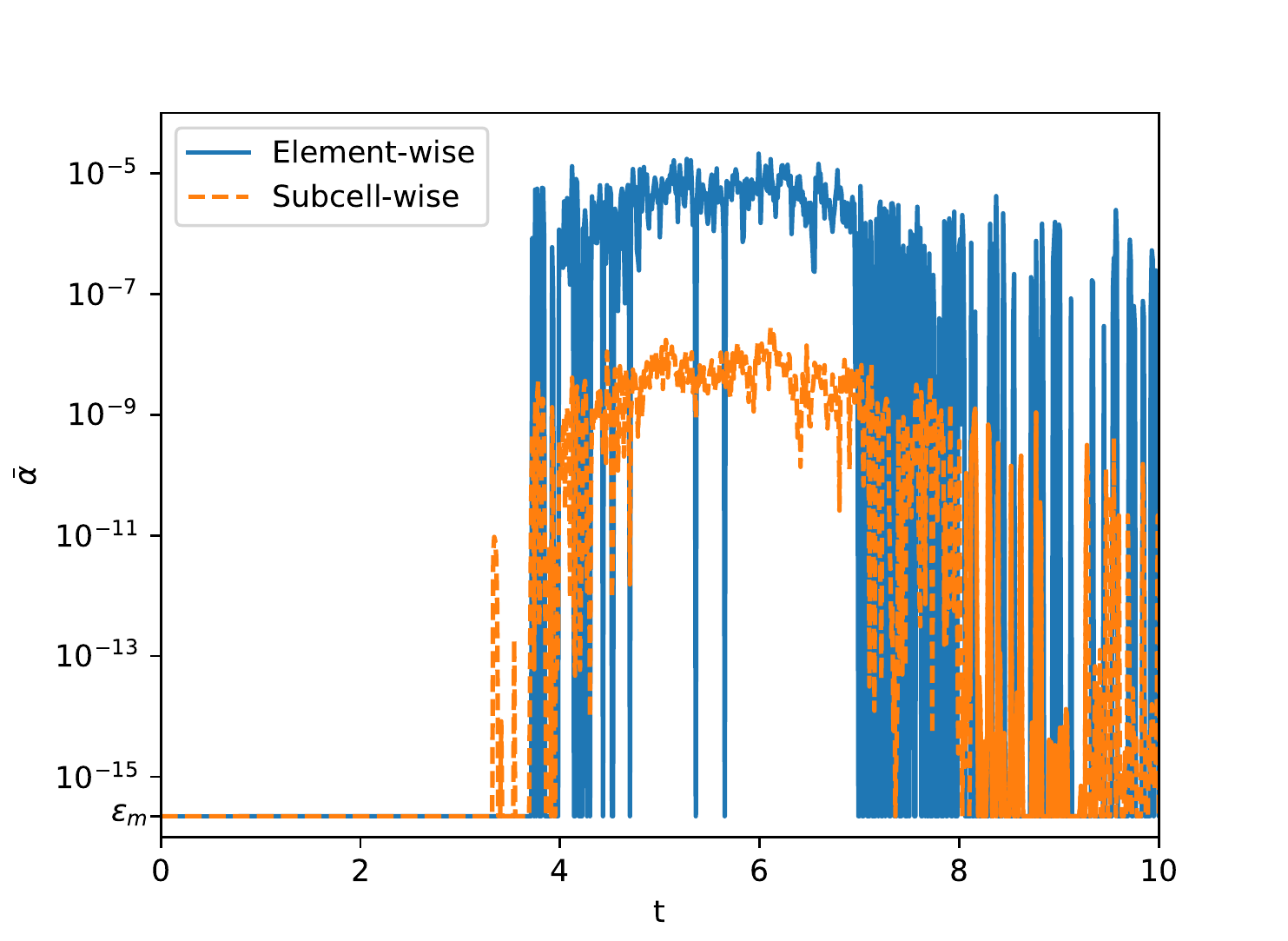}
	\caption{Amount of low-order method in the domain}
	\label{fig:khi_alphabar}
\end{subfigure}
\caption{
Evolution of the blending coefficient over time for the Kelvin-Helmholtz instability simulations.
$\varepsilon_m \approx 2.22 \times 10^{-16}$ corresponds to the double-precision machine epsilon.
}
\label{fig:khi_alpha}
\end{figure}

Figure~\ref{fig:khi_alpha} shows the evolution of the blending coefficient over time for the Kelvin-Helmholtz instability simulation.
To quantify how dissipative the resulting scheme is (that is, how much of the low-order method was required to stabilize the high order method), we plot a generalization of the metrics introduced by \citet{Rueda-Ramirez2021},
\begin{equation} \label{eq:maxAlpha}
\max (\alpha) (t) = \max_{\tau \in [t-\Delta \tau, t]} \left( \max_{e=1}^K \max_{i,j=0}^N \alpha^e_{ij}(\tau) \right),
~~~~
\bar \alpha (t) = \frac{1}{n_s} \sum_{s=1}^{n_s} \left( \frac{1}{V} \sum_{e=1}^K \sum_{i,j=0}^N J_{ij} \omega_{ij} (\alpha^e_{ij})^s \right),
\end{equation}
where $e \in [1,K]$ denotes the element index, $K$ is the number of elements of the domain, $i,j \in [0,N]$ are the node indexes, $N$ is the polynomial degree, $(\alpha^e_{ij})^s$ is the blending coefficient of node $ij$ of element $e$ at the RK stage $s$, $n_s$ is the number of RK stages taken from $t-\Delta \tau$ to $t$, and $V$ is the area of the domain. The value of $\bar \alpha$ represents a volume-weighted average of the blending coefficient to get an impression of the amount of FV mixed to the DG approximation. A value $\bar \alpha = 1$ means that the entire domain uses a first-order FV method, whereas $\bar \alpha = 0$ means that it uses only the high-order DG method.

Figure~\ref{fig:khi_alphamax} shows that the magnitude of the maximum blending coefficient of both blending strategies is similar for this example.
Moreover, from Figure~\ref{fig:khi_alphabar} it is clear that the subcell-wise blending strategy requires a much lower \textit{amount} of low-order method to stabilize the simulation than the element-wise strategy (about three orders of magnitude lower), leading to a less dissipative scheme.

We plot the density contours for the Kelvin-Helmholtz simulations at times $t=3.7$, $t=6.7$ and $t=10$ in Figure~\ref{fig:khi_dens}.
The evolution of the density in the domain is very similar for the element-wise and subcell-wise limiting techniques at the beginning of the simulation, which is expected due to the extremely low values of $\alpha$ in both setups.
Nevertheless, we remark that already at time $t=6.7$, small differences in the density distribution of the two limiting techniques can be appreciated visually, which make the two simulations evolve to substantially different solutions at $t=10$.

\begin{figure}[h!]
\centering
\includegraphics[trim=0 1400 1450 0 ,clip,width=0.48\linewidth]{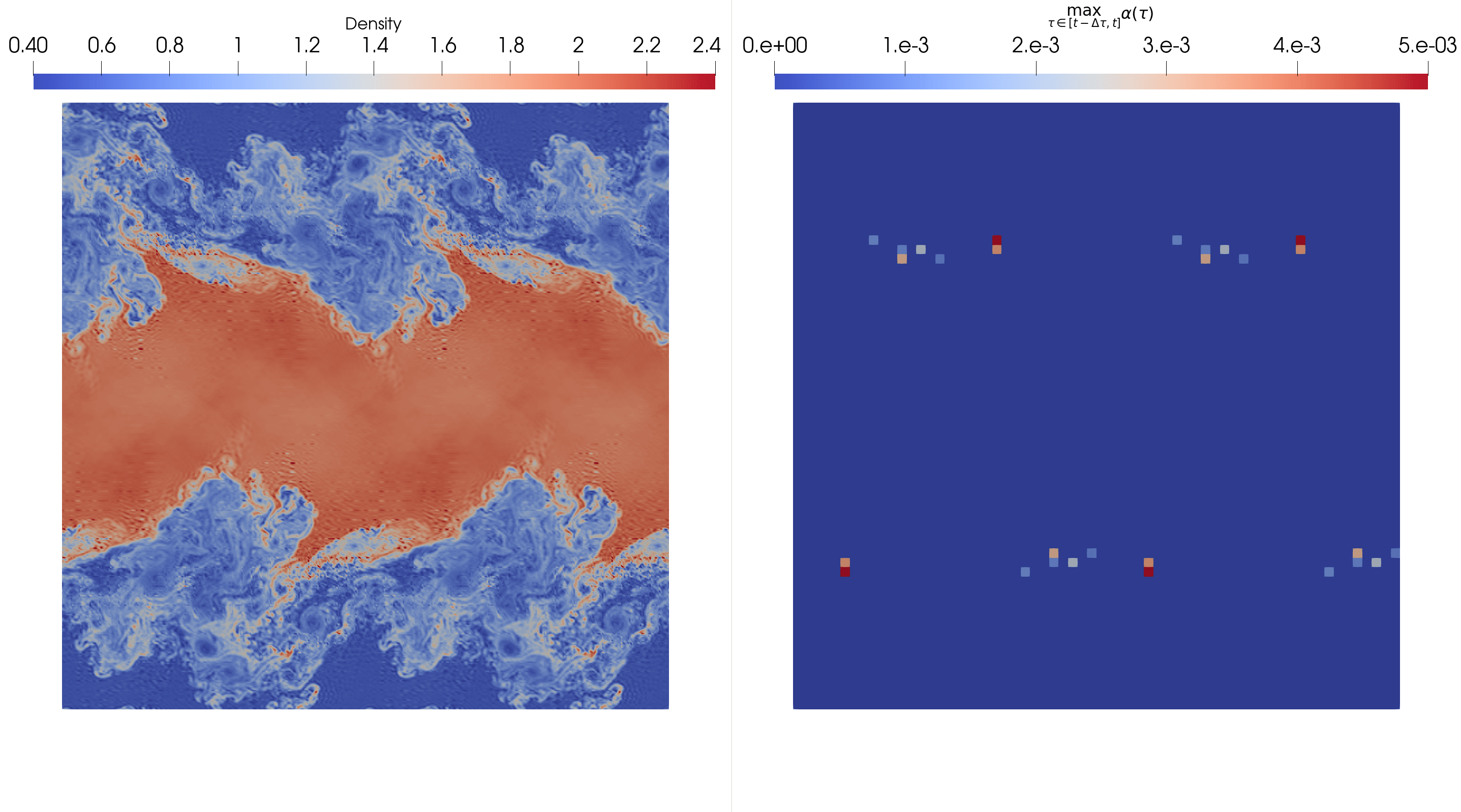}
\\
\begin{subfigure}[b]{0.33\linewidth}
    \centering
	\includegraphics[trim=110 200 1550 200 ,clip,width=\linewidth]{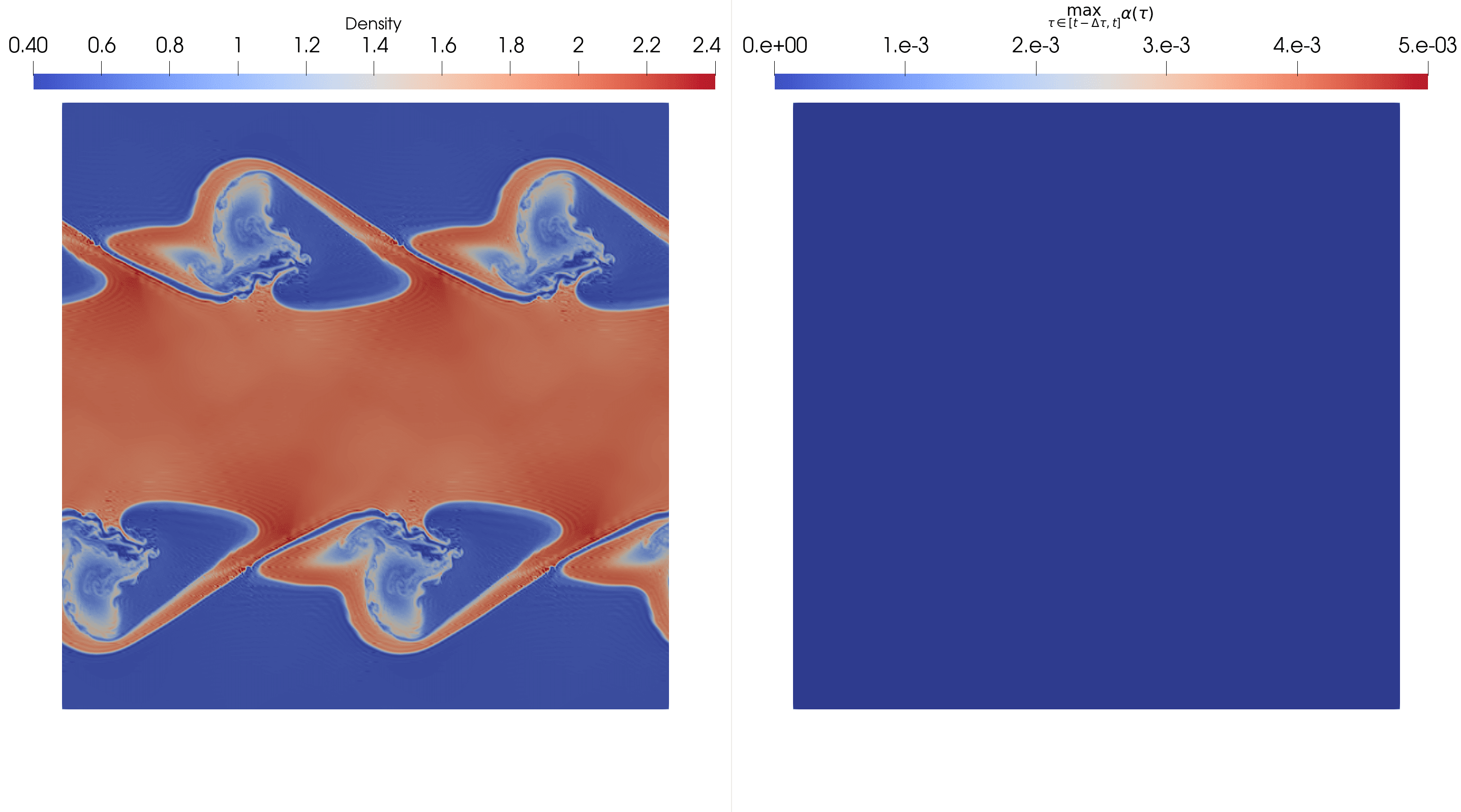}
	\hspace{50pt}
	\caption{Element-wise blending, $t=3.7$}
	\label{fig:khi_dens_elem_1}
\end{subfigure}
\begin{subfigure}[b]{0.33\linewidth}
    \centering
	\includegraphics[trim=110 200 1550 200 ,clip,width=\linewidth]{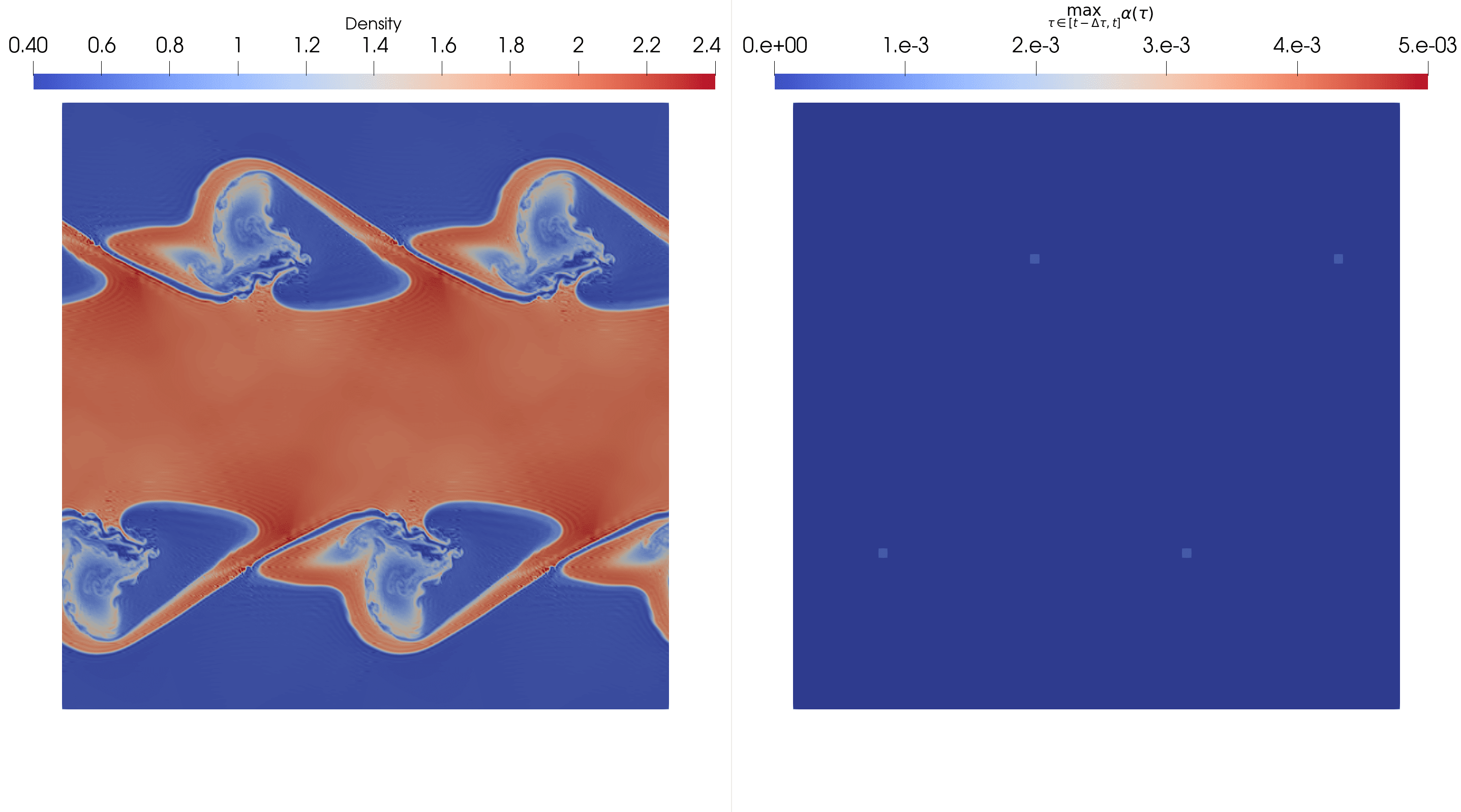}
	\hspace{50pt}
	\caption{Subcell-wise blending, $t=3.7$}
	\label{fig:khi_dens_subc_1}
\end{subfigure}
\begin{subfigure}[b]{0.33\linewidth}
    \centering
	\includegraphics[trim=110 200 1550 200 ,clip,width=\linewidth]{figs/KHI/KHI_elemwise_6_7.png}
	\hspace{50pt}
	\caption{Element-wise blending, $t=6.7$}
	\label{fig:khi_dens_elem_2}
\end{subfigure}
\begin{subfigure}[b]{0.33\linewidth}
    \centering
	\includegraphics[trim=110 200 1550 200 ,clip,width=\linewidth]{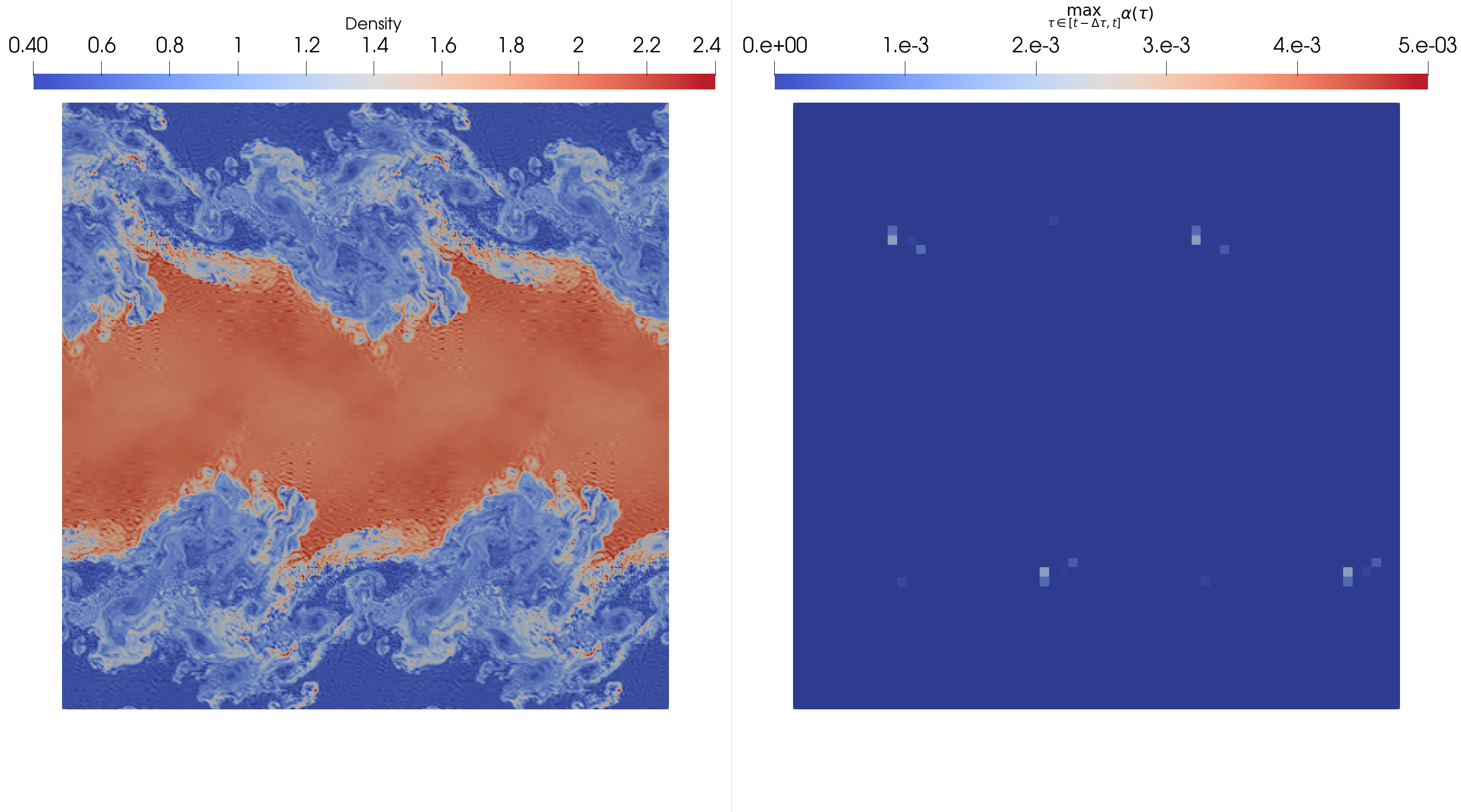}
	\hspace{50pt}
	\caption{Subcell-wise blending, $t=6.7$}
	\label{fig:khi_dens_subc_2}
\end{subfigure}
\begin{subfigure}[b]{0.33\linewidth}
    \centering
	\includegraphics[trim=110 200 1550 200 ,clip,width=\linewidth]{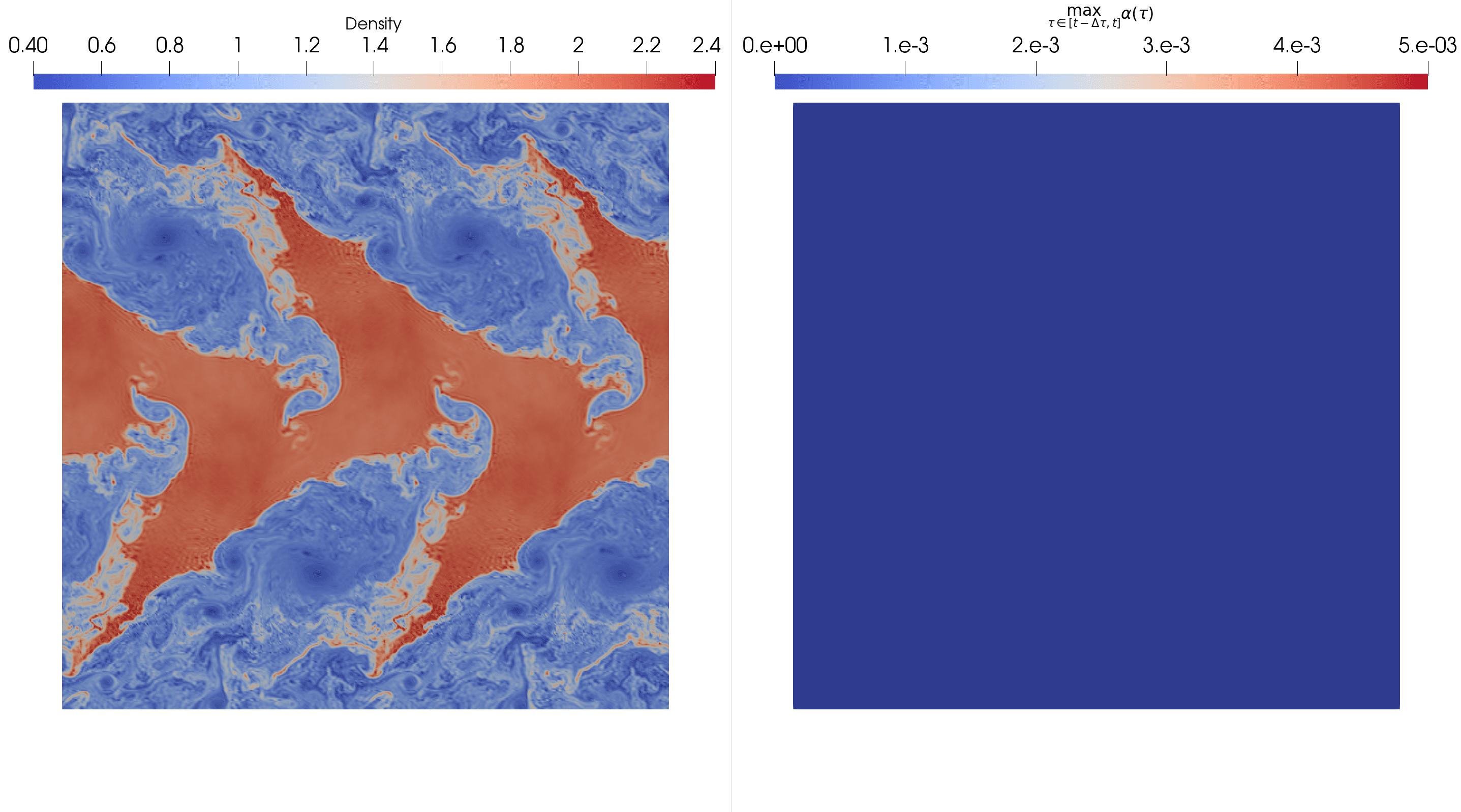}
	\hspace{50pt}
	\caption{Element-wise blending, $t=10$}
	\label{fig:khi_dens_elem_3}
\end{subfigure}
\begin{subfigure}[b]{0.33\linewidth}
    \centering
	\includegraphics[trim=110 200 1550 200 ,clip,width=\linewidth]{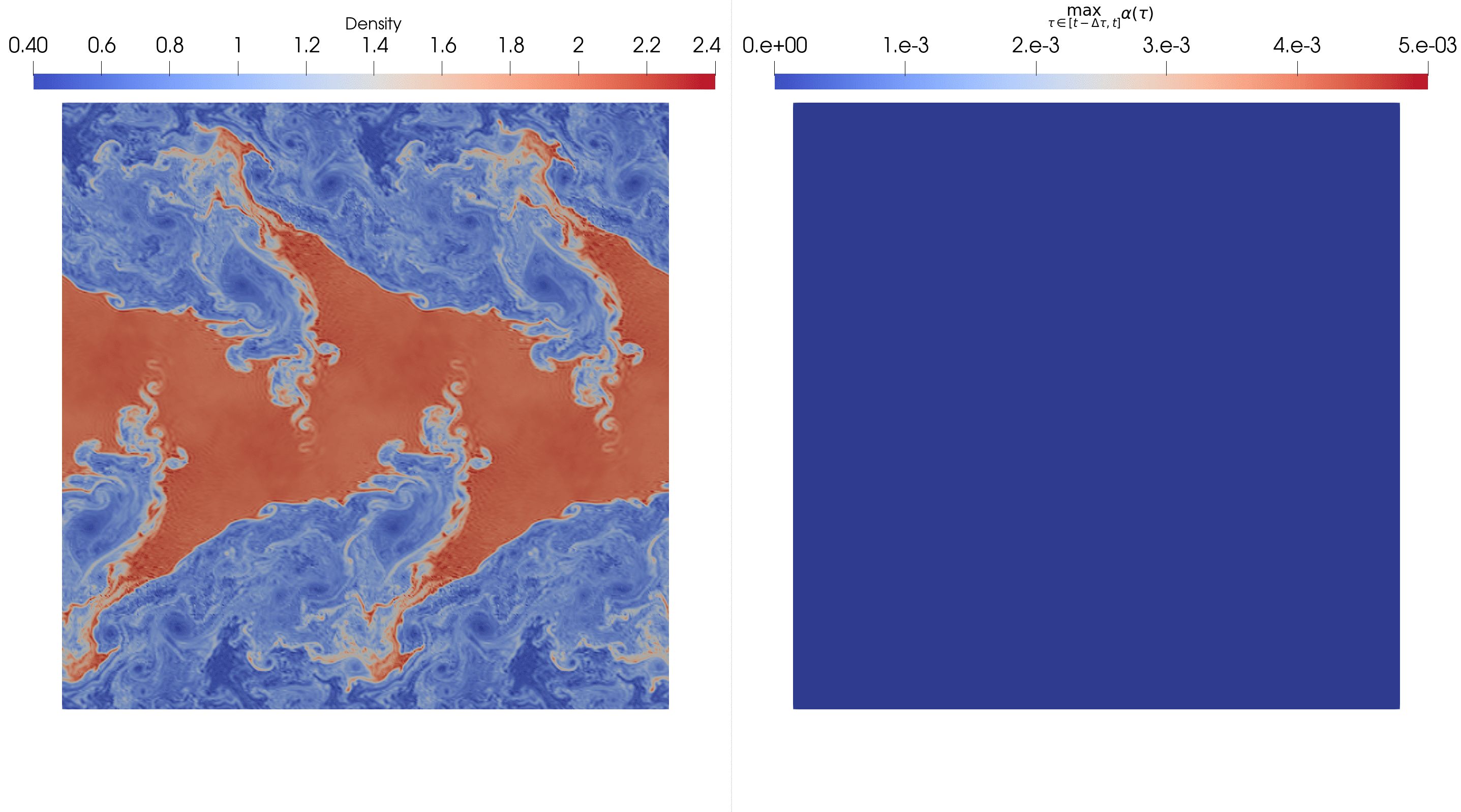}
	\hspace{50pt}
	\caption{Subcell-wise blending, $t=10$}
	\label{fig:khi_dens_subc_3}
\end{subfigure}

\caption{Density contours for the Kelvin-Helmholtz instability simulations with element-wise and subcell-wise blending strategies at three different times. DGSEM results with polynomial degree $N=7$ and $64\times 64$ elements.}
\label{fig:khi_dens}
\end{figure}

Figure~\ref{fig:khi_alpha_contour} shows the distribution of the maximum cumulative blending coefficient for the element-wise and subcell-wise techniques at time $t=6.7$.
Since the low-order method is only activated when the density or pressure computed by the high-order scheme fall below the specified thresholds \eqref{eq:rhop_restriction}, we have to compute the maximum cumulative blending coefficient over a time interval to be able to visualize where dissipation is being added.
It is clear that the subcell-wise technique applies the low-order method in a more localized manner than the element-wise technique, and thus retains the desirable high-order convergence properties as much as possible.

\begin{figure}[h!]
\centering
\includegraphics[trim=1450 1610 0 0 ,clip,width=0.48\linewidth]{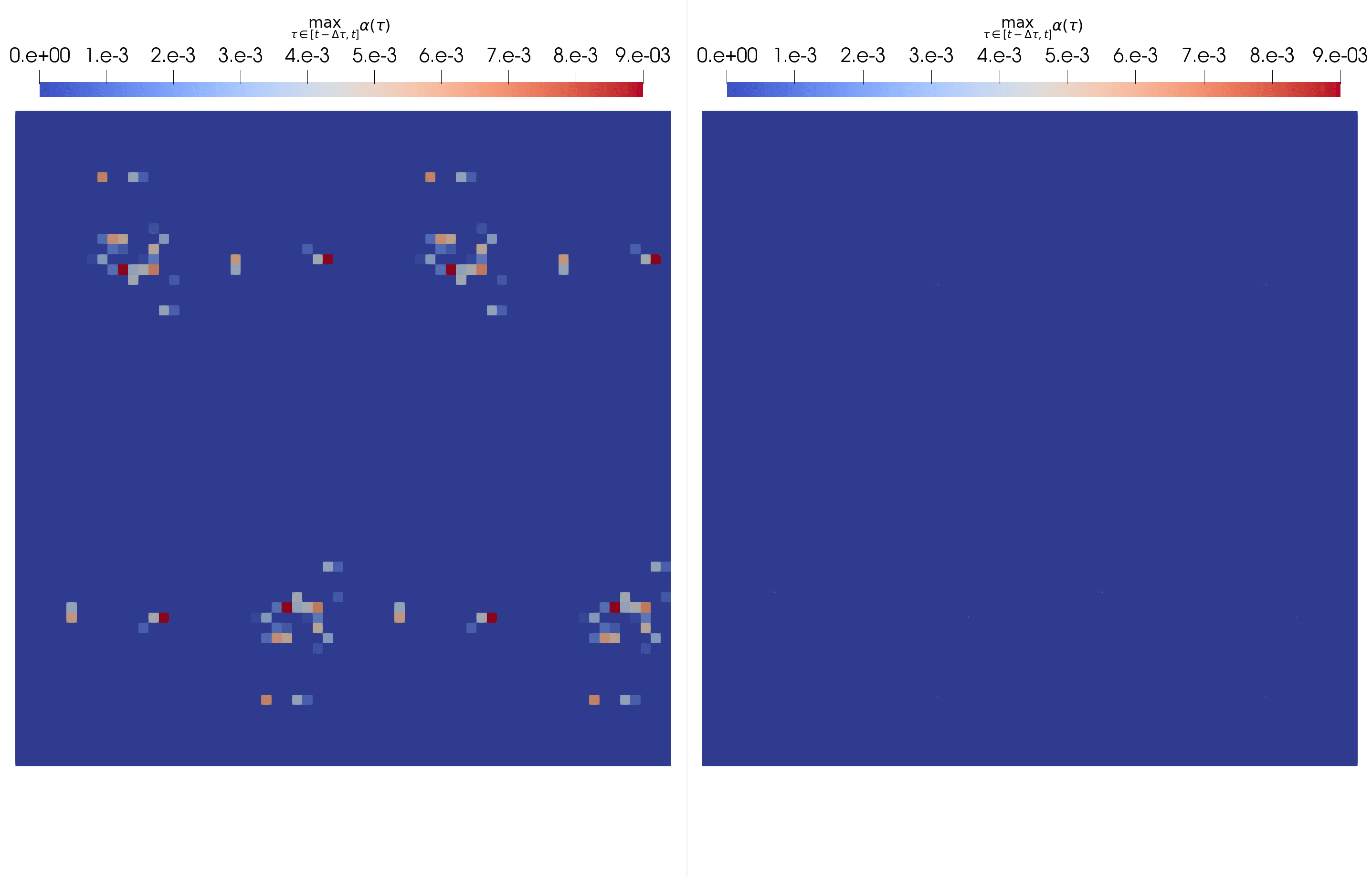}
\\
\begin{subfigure}[b]{0.35\linewidth}
    \centering
	\includegraphics[trim=0 200 1450 210 ,clip,scale=0.1]{figs/KHI/KHI_alpha_cumulative_elem_subcellwise_6_7_tau_0_1}
	\hspace{50pt}
	\caption{Element-wise blending}
	\label{fig:khi_alpha_elem_2}
\end{subfigure}
\begin{subfigure}[b]{0.6\linewidth}
    \centering
	\includegraphics[trim=0 200 50 210 ,clip,scale=0.1]{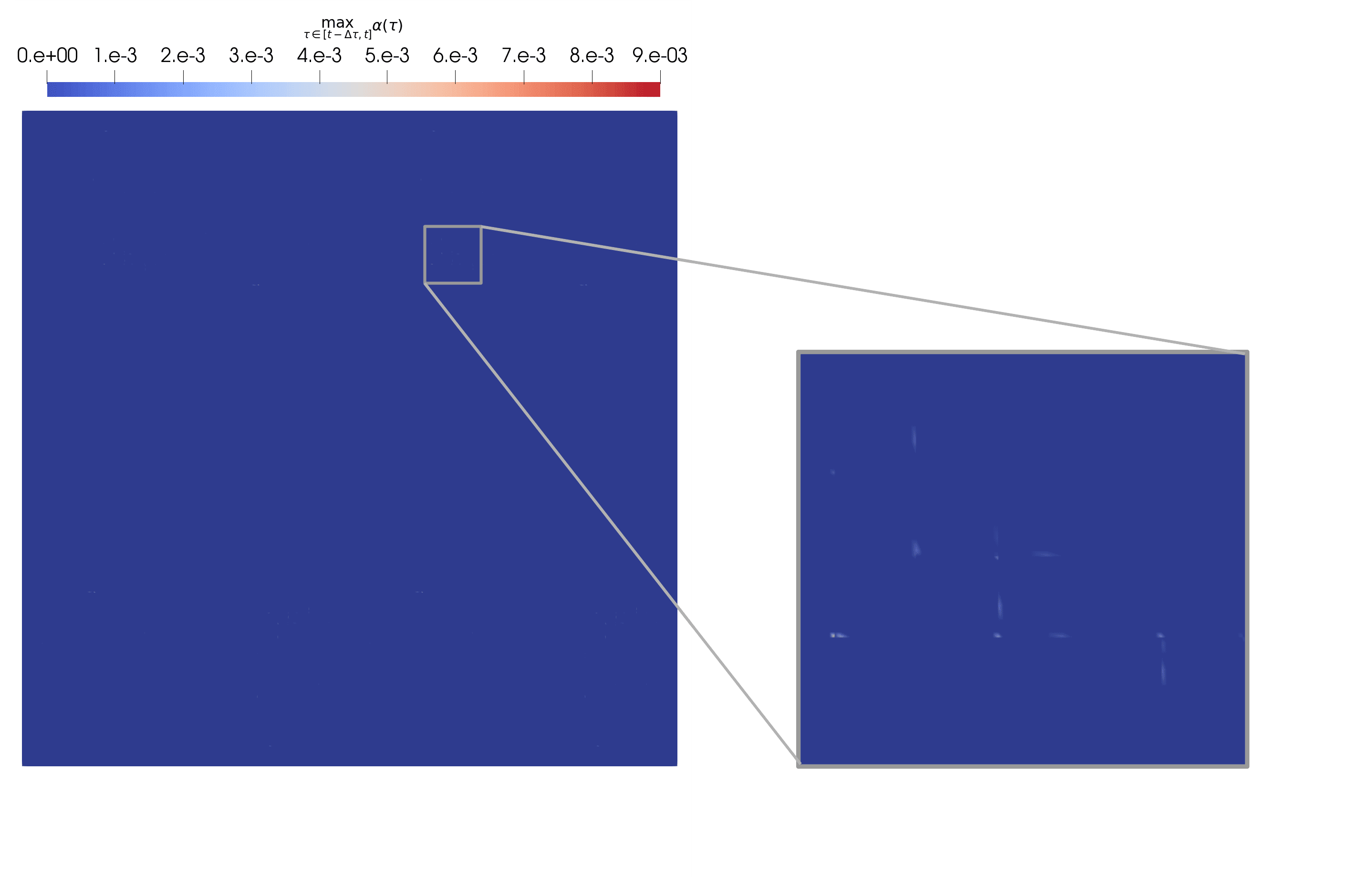}
	\hspace{50pt}
	\caption{Subcell-wise blending and detail}
	\label{fig:khi_alpha_subc_2}
\end{subfigure}

\caption{Contours of the maximum blending coefficient for the Kelvin-Helmholtz instability simulations with element-wise and subcell-wise blending strategies at time $t=6.7$.
The blending coefficient is taken as the maximum over all the SSP-RK stages of a sampling interval $\Delta \tau = 0.1$. DGSEM results with polynomial degree $N=7$ and $64\times 64$ elements.
}
\label{fig:khi_alpha_contour}
\end{figure}

\subsubsection{Inviscid Bow Shock Upstream of a Blunt Body} \label{sec:bowshock}
We now consider the 2D supersonic flow over a blunt body, which produces a detached bow shock.
The aim of this test is to simulate a shock problem using invariant domain preserving techniques with subcell-wise convex blending on a curvilinear grid.

The problem setup was proposed as an advanced test case for the Fifth International Workshop on High-Order CFD Methods \cite{hiocfd5}.
The left boundary of the domain is a circular arc with origin $(3.85,0)$ and radius $5.9$.
The blunt body consists of a flat front of length $1$ and two quarter circles of radius $0.5$.
The meshes were generated with gmsh \cite{geuzaine2009gmsh} using third-order hexahedral elements.
The heat capacity ratio is set to $\gamma=1.4$ and the initial condition is the constant state
\begin{equation}
\rho(x,y) = 1.4, \qquad
p(x,y) = 1, \qquad
v_1(x,y) = 4, \qquad
v_2(x,y) = 0.
\end{equation}

The boundary on the blunt body is a reflecting wall and the other boundaries are characteristics-based inflow/outflow boundaries, where the external state is selected depending on the flow conditions normal to the boundary.
We use the split-form DGSEM with the entropy-stable flux of \citet{Chandrashekar2013} and polynomial degree $N=5$, the Rusanov solver for the surface numerical fluxes, and the first-order FV method.

For this example, we compute the blending coefficient to impose a TVD-like solution on the density from the bar states \eqref{eq:barstates},
\begin{equation} \label{eq:IDPconditionRho}
    \min_{k \in \NN (ij)} \bar{\rho}_{(ij,k)}
    \le \rho_{ij} \le
    \max_{k \in \NN (ij)} \bar{\rho}_{(ij,k)}
\end{equation}
and a discrete minimum principle on a modified specific entropy from the bar states \eqref{eq:barstates},
\begin{equation} \label{eq:IDPconditionEnt}
    \min_{k \in \NN (ij)} \phi(\bar{\state{u}}_{(ij,k)})
    \le \phi({\state{u}}_{ij}),
\end{equation}
where the use of the modified specific entropy,
$\phi = e \rho^{1- \gamma}$,
guarantees the fulfillment of a discrete entropy inequality and is a particularly efficient choice, as $\phi$ is computationally cheaper to evaluate than the \textit{standard} specific entropy, $s=\ln (p \rho^{-\gamma})$, and is better suited for the Newton's method \cite{guermond2019invariant,maier2021efficient}.
Moreover, the fulfillment of conditions \eqref{eq:IDPconditionRho} and \eqref{eq:IDPconditionEnt} guarantees positivity of density and pressure \cite{guermond2019invariant}.

Since we use the LLF solver as the surface numerical flux for the DG and first-order FV method and due to the equivalence between the FV-LLF and the low-order graph Laplacian method shown in Section~\ref{sec:equivalence}, the bounds \eqref{eq:IDPconditionRho} and \eqref{eq:IDPconditionEnt} can always be met, up to machine precision accuracy.

The numerical results for time $t=10$ are plotted in Figure~\ref{fig:bowshock} for two different meshes with 864 and 1536 elements, respectively.
In both mesh resolutions, the shock is resolved sharply, even when it is located inside an element.
It is interesting to note that the FV method is activated primarily upwind of the shock to damp out possible oscillations of the high-order scheme.
About $\bar{\alpha} = 11.03 \%$ of the domain volume is discretized with low order at the end of the simulation for the coarse grid, and about $\bar{\alpha} = 9.50 \%$ for the fine grid.

\begin{figure}[htb]
\centering
\includegraphics[trim=0 1400 1440 0 ,clip,width=0.45\linewidth]{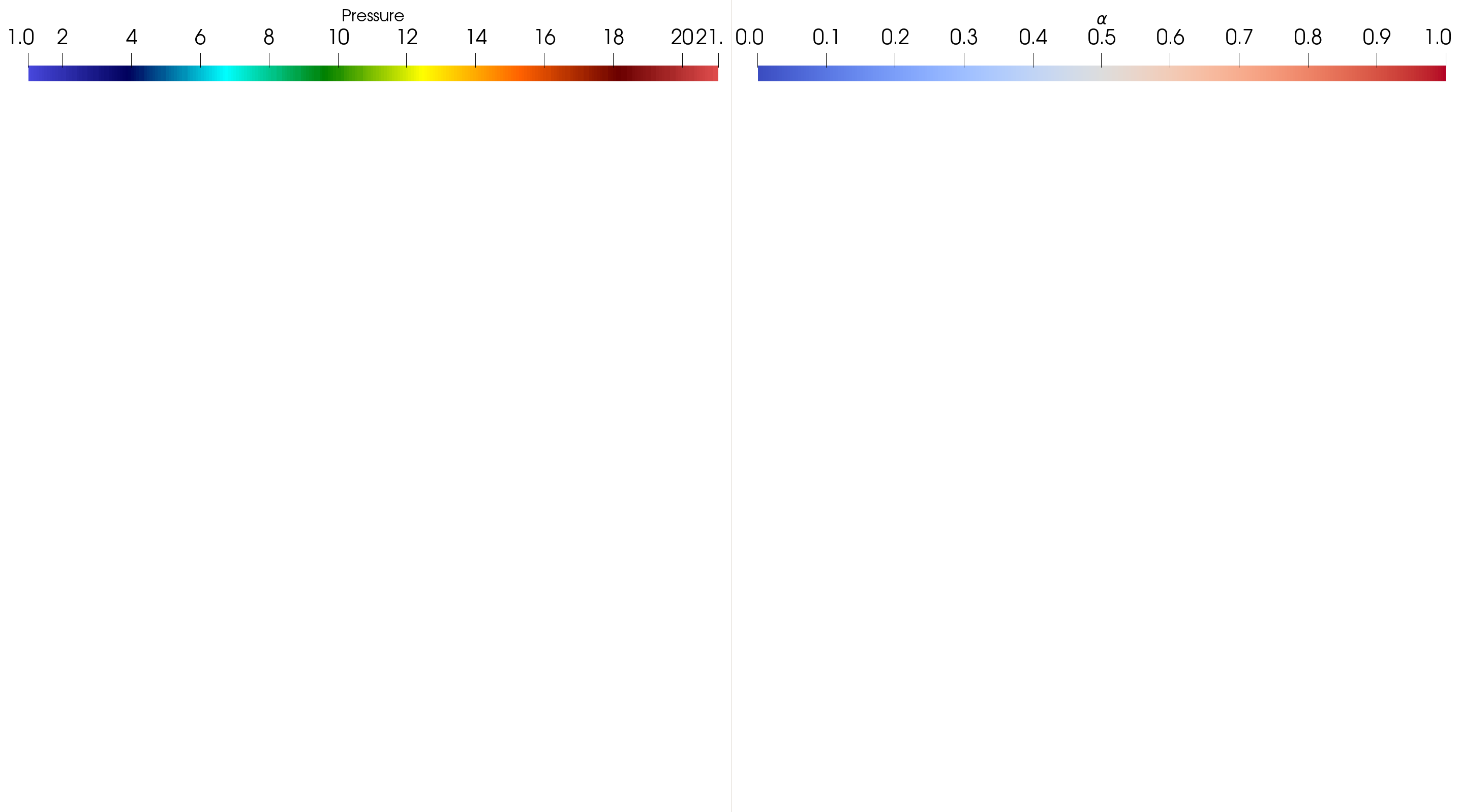}
\includegraphics[trim=1440 1400 0 0 ,clip,width=0.45\linewidth]{figs/BowShock/Legend.png}
\begin{subfigure}[b]{0.45\linewidth}
    \centering
	\includegraphics[trim=550 40 1950 40 ,clip,height=0.9\linewidth]{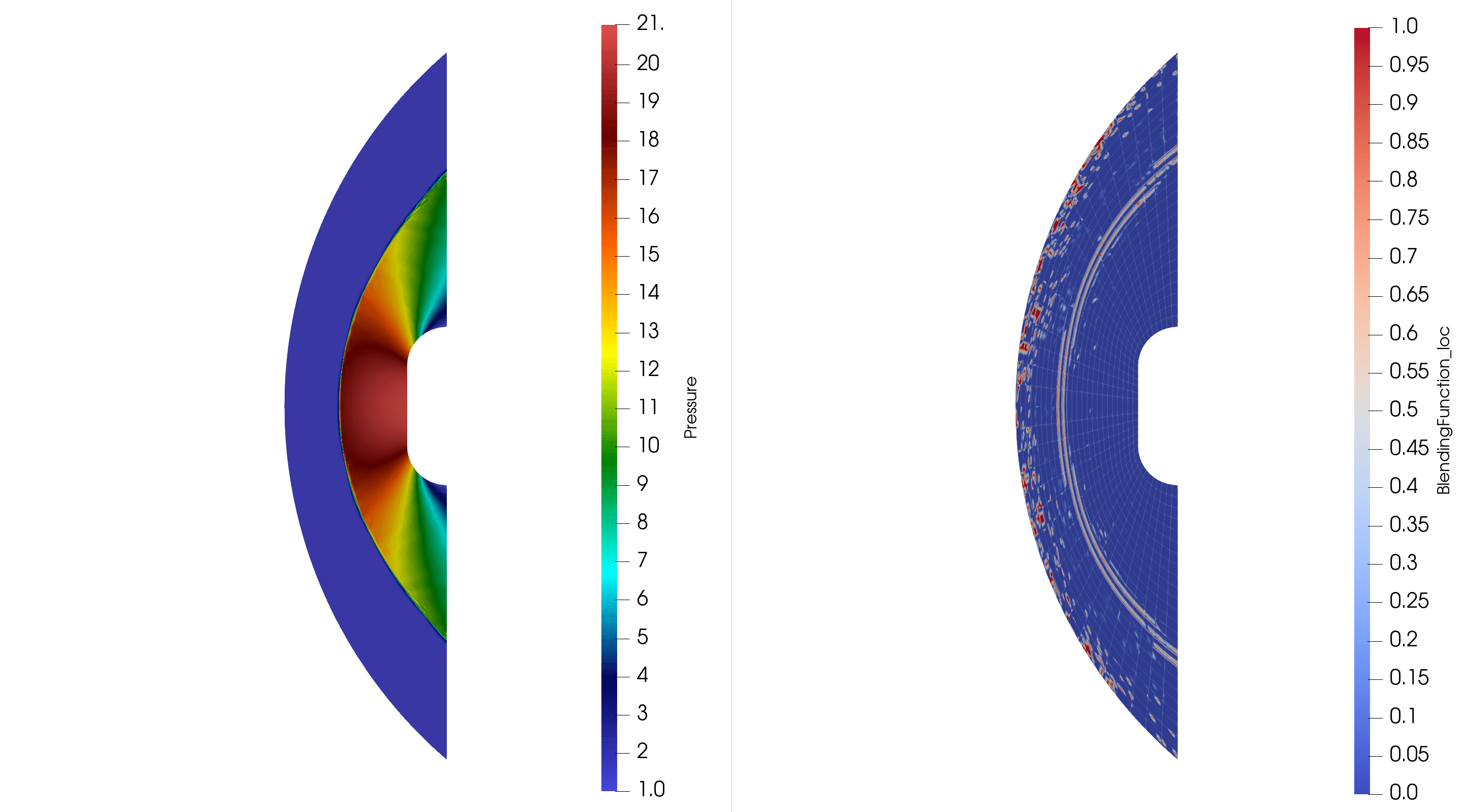}
	\includegraphics[trim=1950 40 550 40 ,clip,height=0.9\linewidth]{figs/BowShock/Bowshock_grid3.png}
	\caption{Mesh with 864 elements (31104 DOFs)}
	\label{fig:bow:1}
\end{subfigure}
\begin{subfigure}[b]{0.45\linewidth}
    \centering
	\includegraphics[trim=550 40 1950 40 ,clip,height=0.9\linewidth]{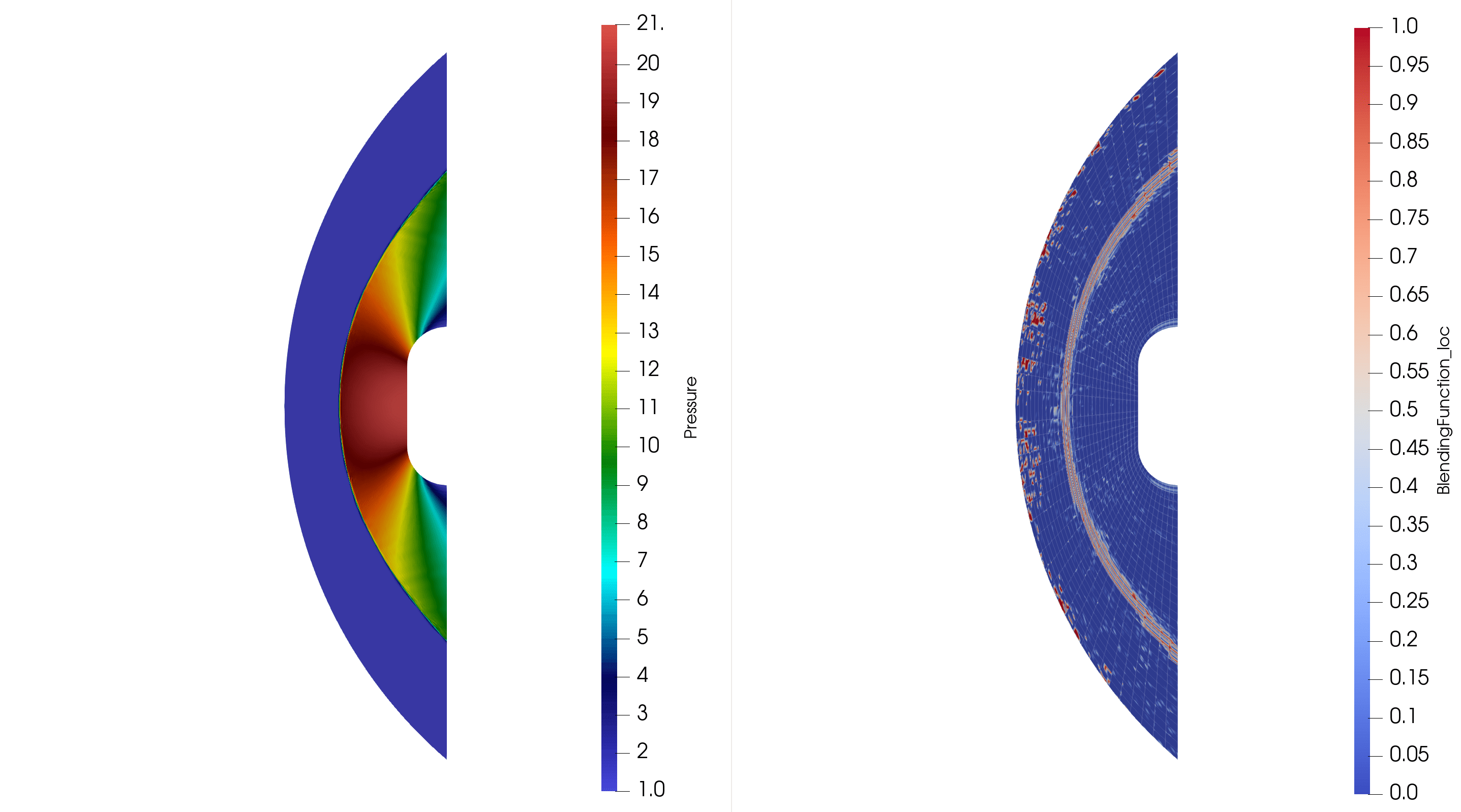}
	\includegraphics[trim=1950 40 550 40 ,clip,height=0.9\linewidth]{figs/BowShock/Bowshock_grid4.png}
	\caption{Mesh with 1536 elements (55296 DOFs)}
	\label{fig:bow_2}
\end{subfigure}
\caption{Simulation results for the bow shock simulation at time $t = 10$.
We plot the pressure, $p$, and the instant local blending coefficient, $\alpha$, for two different mesh resolutions. DGSEM results with polynomial degree $N=5$.}
\label{fig:bowshock}
\end{figure}

\subsubsection{High Mach Astrophysical Jet}

To test the robustness of the subcell limiting strategies for the DGSEM, we simulate an astrophysical jet with a Mach number $\textrm{Ma} \approx 2000$.
This example was originally proposed by \citet{ha2005numerical}, and is considered an extreme benchmark for robust high-order problems \cite{zhang2010positivity,liuoscillation}.

The computation domain is the unit square, $[-0.5,0.5]^2$, tessellated into $256 \times 256$ quadrilateral elements of degree $N=3$.
The top and bottom boundaries are periodic, and the left and right boundaries are characteristics-based inflow/outflow boundaries.
The domain is filled with a monatomic gas ($\gamma = 5/3$) at rest with
\begin{equation}
\rho(x,y) = 0.5, \qquad
p(x,y) = 0.4127, \qquad
v_1(x,y) = 0, \qquad
v_2(x,y) = 0,
\end{equation}
and on the left boundary there is a hypersonic inflow with
\begin{equation}
\rho(-0.5,y) = 5, \qquad
p(-0.5,y) = 0.4127, \qquad
v_1(-0.5,y) = 800, \qquad
v_2(-0.5,y) = 0
\end{equation}
for $y \in [-0.05, 0.05]$, which corresponds to a Mach number of $\textrm{Ma}=2156.91$ with respect to the speed of sound in the jet gas, and $\textrm{Ma}=682.08$ with respect to the speed of sound in the ambient gas.

As in the previous examples, we discretize the Euler equations using the split-form DGSEM, and the entropy-conserving and kinetic energy preserving flux of \citet{Chandrashekar2013} for the volume numerical fluxes, $\state{f}^*$.
Similarly to the case of the bow shock in Section~\ref{sec:bowshock}, we impose TVD-like bounds on the density \eqref{eq:IDPconditionRho} and a local minimum principle on the modified specific entropy \eqref{eq:IDPconditionEnt} based on the bar states \eqref{eq:barstates}.

In this example, we test different low-order methods to assess their impact on the solution when performing element-wise and subcell-wise limiting.
As the baseline low-order scheme, we use a standard FV method with first-order reconstruction (Figure~\ref{fig:FV_1st}) and the LLF method as the surface numerical flux, $\numfluxb{f}$.
Because of the equivalence shown in Section~\ref{sec:equivalence}, this scheme is able to fulfill the bounds for both the element-wise and subcell-wise blending strategies.

\begin{figure}[h!]
\centering
\includegraphics[trim=0 1400 1450 0 ,clip,width=0.48\linewidth]{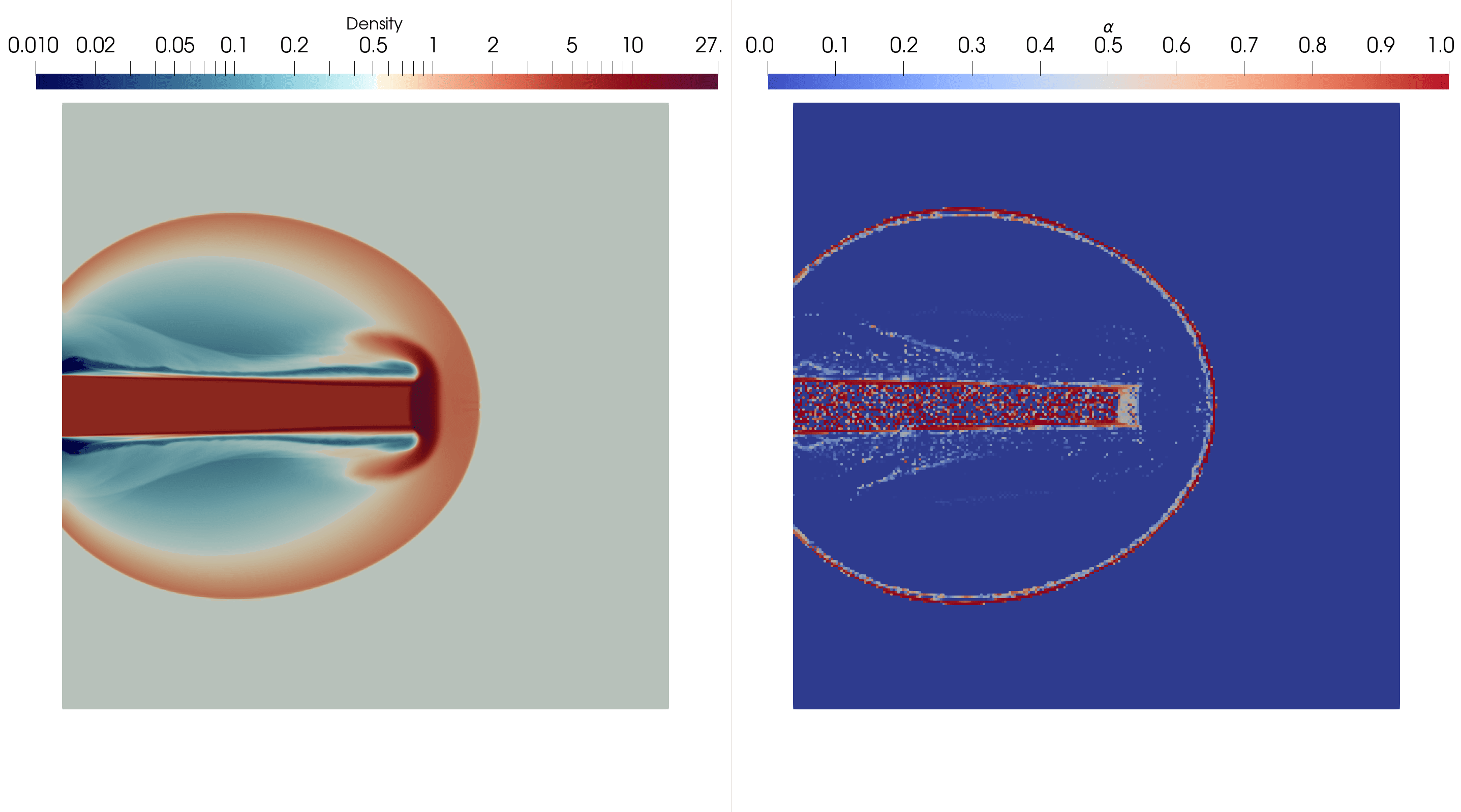}
\includegraphics[trim=1450 1400 0 0 ,clip,width=0.48\linewidth]{figs/Jet/Jet_ElemWise_1st_LLF.png}
\begin{subfigure}[b]{\linewidth}
    \centering
	\includegraphics[trim=110 200 1550 200 ,clip,width=0.35\linewidth]{figs/Jet/Jet_ElemWise_1st_LLF.png}
	\hspace{50pt}
	\includegraphics[trim=1550 200 110 200 ,clip,width=0.35\linewidth]{figs/Jet/Jet_ElemWise_1st_LLF.png}
	\caption{ESDGSEM + first-order FV (LLF)}
	\label{fig:jet_elem_1}
\end{subfigure}
\begin{subfigure}[b]{\linewidth}
    \centering
	\includegraphics[trim=110 200 1550 200 ,clip,width=0.35\linewidth]{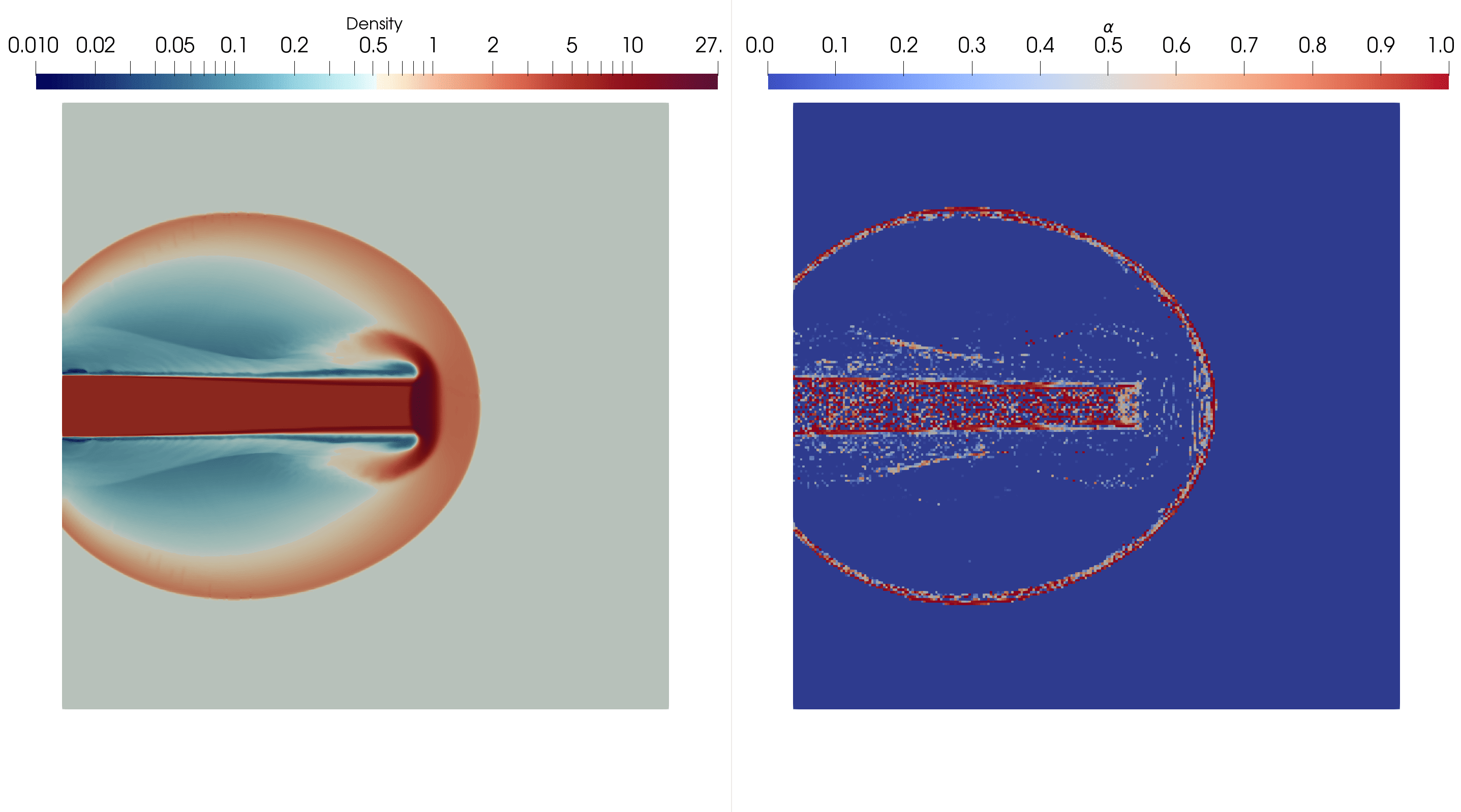}
	\hspace{50pt}
	\includegraphics[trim=1550 200 110 200 ,clip,width=0.35\linewidth]{figs/Jet/Jet_ElemWise_1st_HLLE.png}
	\caption{ESDGSEM + first-order FV (HLLE)}
	\label{fig:jet_elem_2}
\end{subfigure}
\begin{subfigure}[b]{\linewidth}
    \centering
	\includegraphics[trim=110 200 1550 200 ,clip,width=0.35\linewidth]{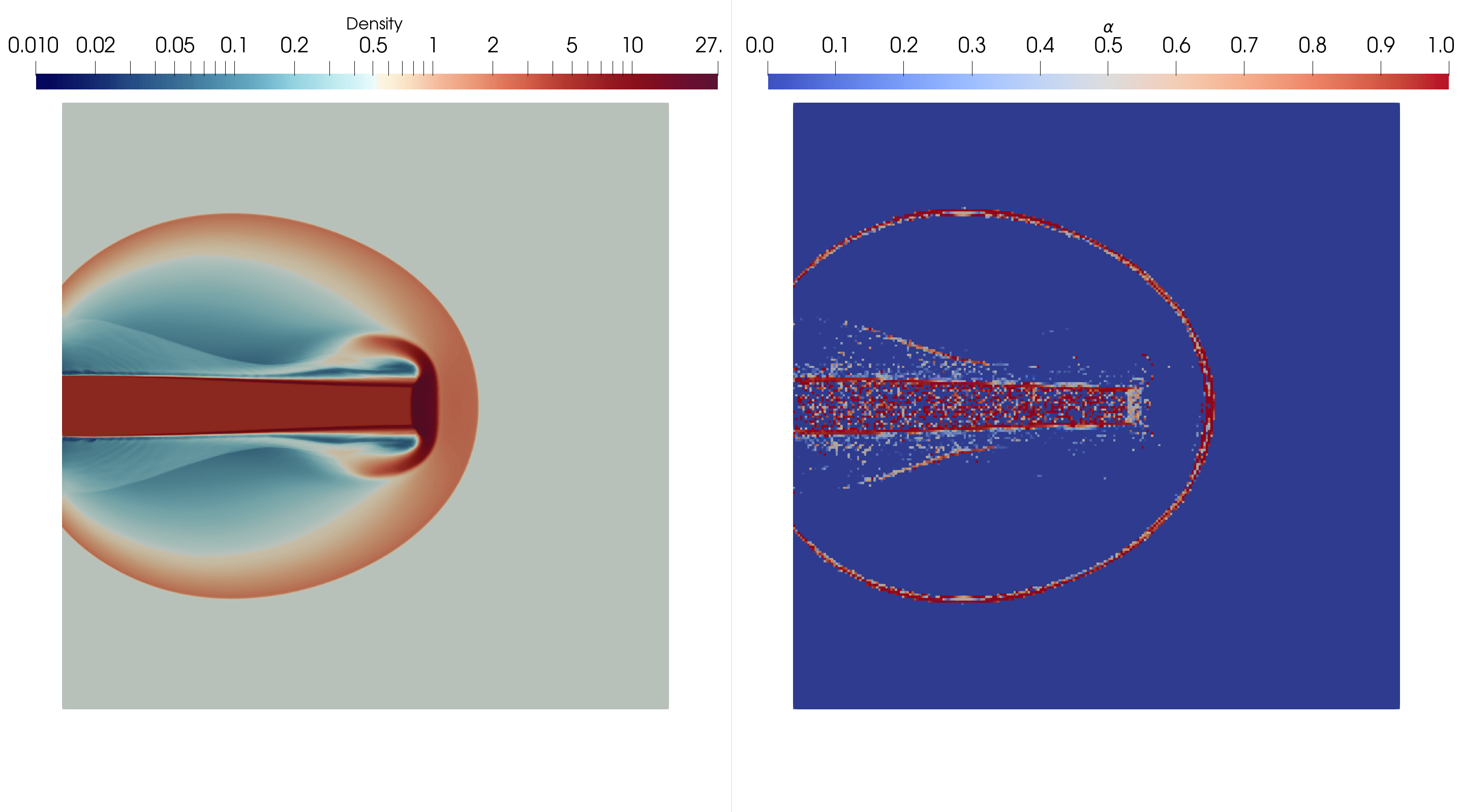}
	\hspace{50pt}
	\includegraphics[trim=1550 200 110 200 ,clip,width=0.35\linewidth]{figs/Jet/Jet_ElemWise_2nd_HLLE.png}
	\caption{ESDGSEM + second-order FV (HLLE)}
	\label{fig:jet_elem_3}
\end{subfigure}

\caption{Simulation results for the astrophysical jet simulation with element-wise convex blending at time $t = 10^{-3}$.
We plot the density, $\rho$, and the instant blending coefficient, $\alpha$, for three different schemes considered. DGSEM results with polynomial degree $N=3$ and $256\times 256$ elements.}
\label{fig:jet_elem}
\end{figure}

\begin{figure}[h!]
\centering
\includegraphics[trim=0 1638 1450 0 ,clip,width=0.48\linewidth]{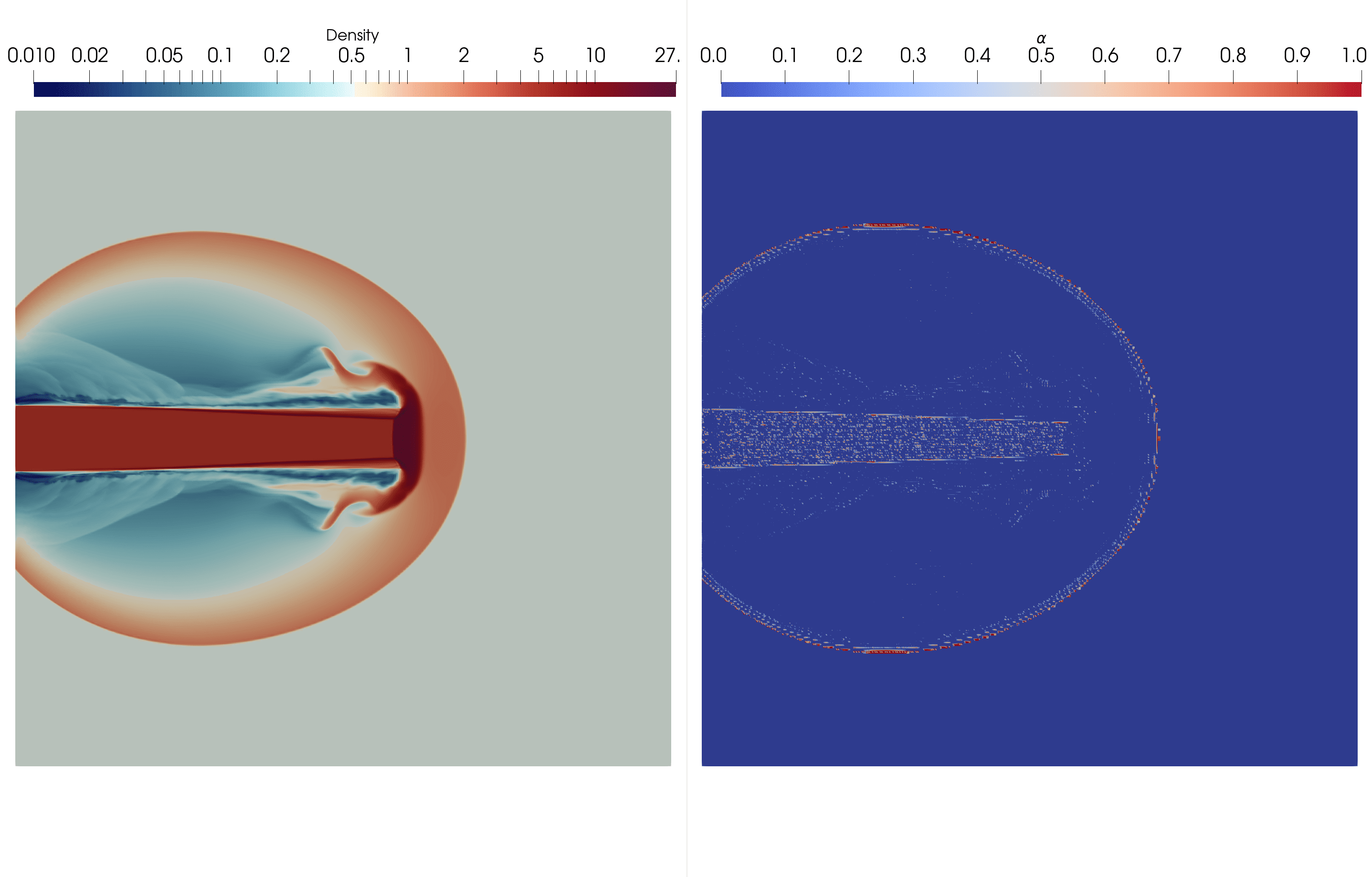}
\includegraphics[trim=1450 1638 0 0 ,clip,width=0.48\linewidth]{figs/Jet/Jet_SubcellWise_1st_LLF.png}
\begin{subfigure}[b]{\linewidth}
    \centering
	\includegraphics[trim=0 220 1450 220 ,clip,width=0.35\linewidth]{figs/Jet/Jet_SubcellWise_1st_LLF.png}
	\hspace{50pt}
	\includegraphics[trim=1450 220 0 220 ,clip,width=0.35\linewidth]{figs/Jet/Jet_SubcellWise_1st_LLF.png}
	\caption{ESDGSEM + first-order FV (LLF)}
	\label{fig:jet_subcell_1}
\end{subfigure}
\begin{subfigure}[b]{\linewidth}
    \centering
	\includegraphics[trim=0 220 1450 220 ,clip,width=0.35\linewidth]{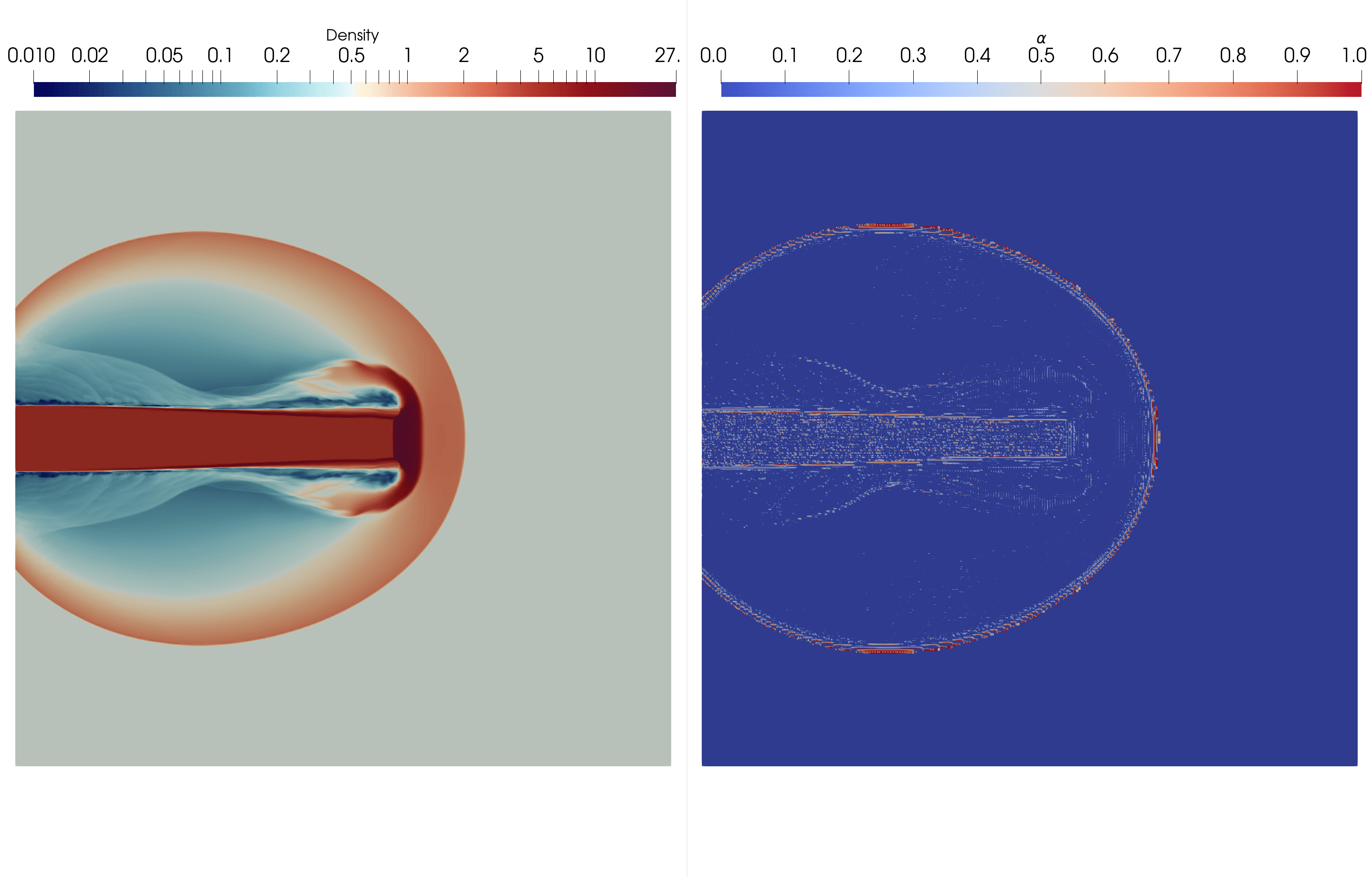}
	\hspace{50pt}
	\includegraphics[trim=1450 220 0 220 ,clip,width=0.35\linewidth]{figs/Jet/Jet_SubcellWise_1st_HLLE.png}
	\caption{ESDGSEM + first-order FV (HLLE)}
	\label{fig:jet_subcell_2}
\end{subfigure}
\begin{subfigure}[b]{\linewidth}
    \centering
	\includegraphics[trim=0 220 1450 220 ,clip,width=0.35\linewidth]{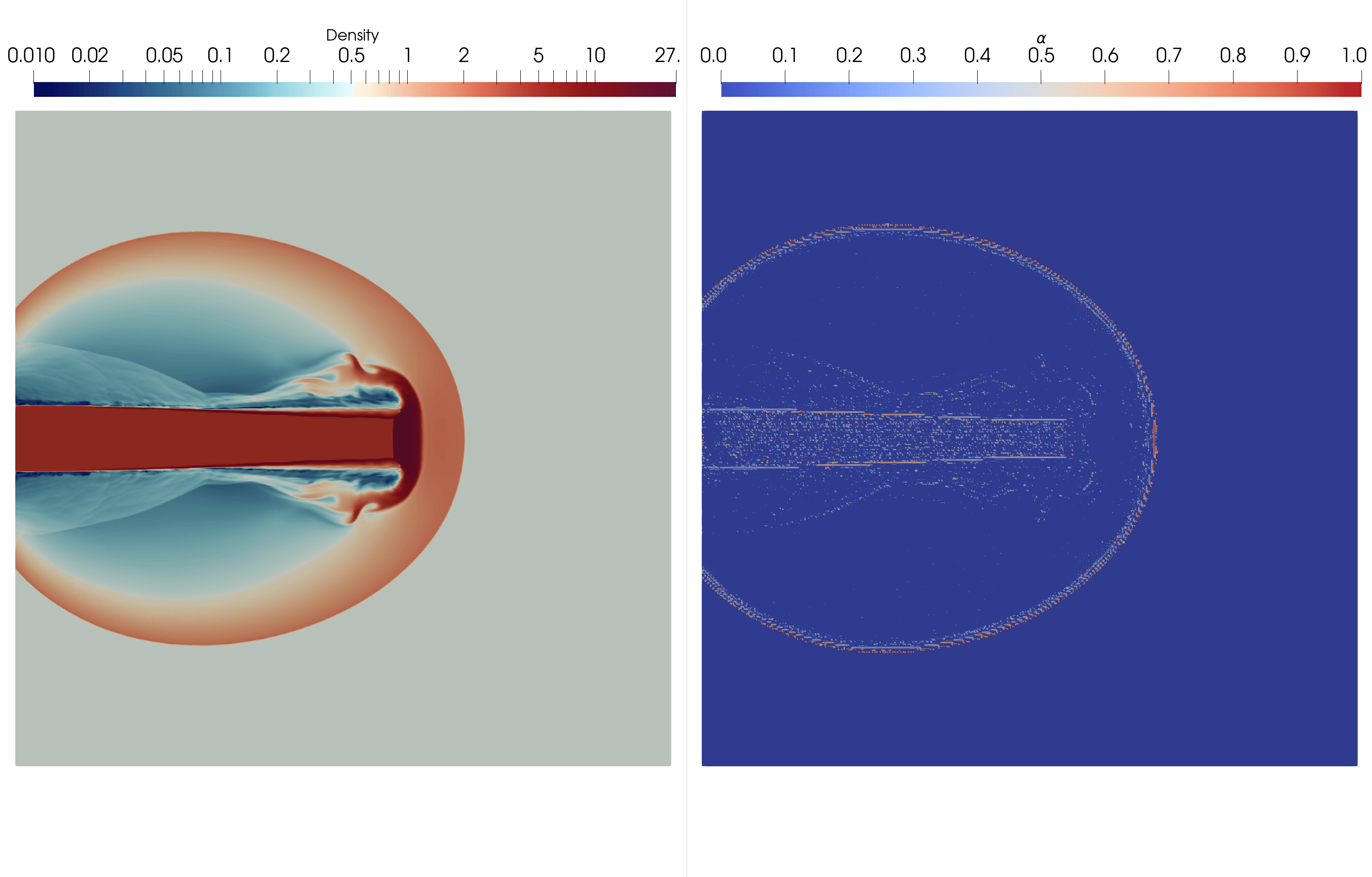}
	\hspace{50pt}
	\includegraphics[trim=1450 220 0 220 ,clip,width=0.35\linewidth]{figs/Jet/Jet_SubcellWise_2nd_HLLE.png}
	\caption{ESDGSEM + second-order FV (HLLE)}
	\label{fig:jet_subcell_3}
\end{subfigure}

\caption{Simulation results for the astrophysical jet simulation with subcell-wise convex blending at time $t = 10^{-3}$.
We plot the density, $\rho$, and the instant blending coefficient, $\alpha$, for three different schemes considered. DGSEM results with polynomial degree $N=3$ and $256\times 256$ elements.}
\label{fig:jet_subcell}
\end{figure}

Besides the baseline scheme, we test a FV scheme with second-order reconstruction on primitive variables ($\rho$, $\vec{v}$ and $p$) and the positivity-preserving HLLE Riemann solver \cite{Einfeldt1988} for the surface numerical fluxes.
We compute the reconstructed quantities in reference space using a \textit{minmod} limiter.
Since there are collocated nodes at the interface between every two neighbor elements, the \textit{minmod} reconstruction procedure is not well-defined for the subcells at the element boundaries.
Therefore, to guarantee a positivity-preserving FV reconstruction, we only use the \textit{minmod} limiter for the inner subcells, and fall back on a first-order reconstruction at the element boundaries, as shown in Figure~\ref{fig:FV_2nd}.

The low-order methods that use a second-order reconstruction and/or the HLLE solver are not guaranteed to satisfy the bounds computed from the bar states, i.e.\ \eqref{eq:IDPconditionEnt} and \eqref{eq:IDPconditionRho}; bounds of this form are closely related to the first-order LLF discretizations.
Nevertheless, we keep \eqref{eq:IDPconditionRho} and \eqref{eq:IDPconditionEnt} in the absence of better bounds and employ the algorithms that determine the blending coefficient to obtain the numerical solution that is closest to the restrictions.

Figure~\ref{fig:jet_elem} shows the density and blending coefficient distributions of the three different schemes considered when using the element-wise convex combination strategy.
It is evident that the shock resolution and the amount of small scales improves when higher-order FV reconstruction procedures and high-fidelity Riemann solvers are used.
Figure~\ref{fig:jet_subcell} shows the density and blending coefficient distributions of the three different schemes considered when using the subcell-wise convex combination strategy.
It is clear that the subcell-wise blending captures the shocks more sharply and allows the development of smaller scales than the element-wise blending.
Moreover, the resolution of small scale features improves again when higher-order FV reconstruction procedures and high-fidelity Riemann solvers are used.

\subsection{Ideal Magnetohydrodynamics}

We now consider the ideal MHD equations with a divergence cleaning mechanism that is based on a generalized Lagrange multiplier (GLM), also known as the GLM-MHD equations \cite{derigs2018ideal,Dedner2002}.
The GLM-MHD equations can be written as a conservative or non-conservative system.
Although the non-conservative form is necessary to show entropy stability \cite{derigs2018ideal}, we use the conservative form in this work since it is more convenient to cast in the framework of the present collection of subcell limiting strategies.

The conserved quantities of the GLM-MHD system are $\state{u} = [\rho, \rho \vec{v}, \rho E]^T$, $\rho$ is the density, $\vec{v} = (v_1, v_2, v_3)^T$ is the velocity, $E$ is the specific total energy, $\vec{B} = (B_1, B_2, B_3)^T$ is the magnetic field, and $\psi$ is the so-called \textit{divergence-correcting field}, a GLM that is added to the original MHD system to minimize the magnetic field divergence.

The flux contains the Euler, ideal MHD and GLM contributions,
\begin{equation}
\blocktensor{f}(\mathbf{u}) = \blocktensor{f}^{\supEuler} +\blocktensor{f}^{\supMHD}
+\blocktensor{f}^{\supGLM}=
\begin{pmatrix}
\rho \vec{v} \\[0.15cm]
\rho (\vec{v}\, \vec{v}^{\,T}) + p\threeMatrix{I} \\[0.15cm]
\vec{v}\left(\frac{1}{2}\rho \left\|\vec{v}\right\|^2 + \frac{\gamma p}{\gamma -1}\right)  \\[0.15cm]
\threeMatrix{0}\\ \vec{0}\\[0.15cm]
\end{pmatrix} +
\begin{pmatrix}
\vec{0} \\[0.15cm]
\frac{1}{2 \mu_0} \|\vec{B}\|^2 \threeMatrix{I} - \frac{1}{\mu_0} \vec{B} \vec{B}^T \\[0.15cm]
\frac{1}{\mu_0} \left( \vec{v}\,\|\vec{B}\|^2 - \vec{B}\left(\vec{v}\cdot\vec{B}\right) \right) \\[0.15cm]
\vec{v}\,\vec{B}^T - \vec{B}\,\vec{v}^{\,T} \\ \vec{0}\\[0.15cm]
\end{pmatrix} +
\begin{pmatrix}
\vec{0} \\[0.15cm]
\threeMatrix{0} \\[0.15cm]
\frac{c_h}{\mu_0} \psi \vec{B} \\[0.15cm]
c_h \psi \threeMatrix{I} \\ c_h \vec{B}\\[0.15cm]
\end{pmatrix},
\label{eq:advective_fluxes}
\end{equation}
where $p$ is the gas pressure, $\threeMatrix{I}$ is the $3\times 3$ identity matrix, $\mu_0$ is the permeability of the medium, and $c_h$ is the \textit{hyperbolic divergence cleaning speed},
The variable $c_h$ is a time-dependent parameter in the simulation, which is computed at every time step as the maximum value that retains CFL-stability.

We close the system with the (GLM) calorically perfect gas assumption,
\begin{equation}
p = (\gamma-1)\left(\rho  E - \frac{1}{2}\rho\left\|\vec{v}\right\|^2 - \frac{1}{2 \mu_0}\|\vec{B}\|^2 - \frac{1}{2 \mu_0}\psi^2\right),
\label{eqofstate}
\end{equation}
where $\gamma$ denotes the heat capacity ratio.

In these equations, the divergence-free condition, $\Nabla \cdot \vec{B} = 0$, is not enforced exactly.
However, the solution evolves towards a divergence-free state \cite{Munz2000,Dedner2002,derigs2018ideal}.

\subsubsection{Orszag-Tang Vortex}

The Orszag-Tang Vortex is an inviscid 2D MHD case that was originally proposed by \citet{orszag1979small}, which is widely used to test the robustness of MHD codes \cite{Chandrashekar2016,derigs2018ideal,Stone2008,kuzmin2020limiting}.
The simulation starts from a smooth initial condition, which evolves into complex shock patterns with several shock-shock interactions and transitions to MHD turbulence.

We tessellate the simulation domain, $\Omega = [0,1]^2$,  with a Cartesian grid and periodic boundary conditions.
The initial conditions are
\begin{align*}
\rho_0(x,y) &= \frac{25}{36 \pi},
&  p_0(x,y) &= \frac{5}{12 \pi}, \\
 v_{1,0}(x,y) &= - \sin (2 \pi y),
&v_{2,0}(x,y) &=   \sin (2 \pi x), \\
 B_{1,0}(x,y) &= -\frac{1}{\sqrt{4 \pi}} \sin (2 \pi y),
&B_{2,0}(x,y) &= -\frac{1}{\sqrt{4 \pi}} \sin (4 \pi x).
\end{align*}

In this example, we use the subcell-wise limiting technique.
As in \cite{kuzmin2020limiting}, we compute the blending coefficient at each RK stage to impose a TVD-like solution on the density,
\begin{equation} \label{eq:IDPconditionOT}
    \min_{k \in \NN (ij)} {\rho}^{\FV}_{k}
    \le \rho_{ij} \le
    \max_{k \in \NN (ij)} {\rho}^{\FV}_{k},
\end{equation}
where $\rho_k^{\FV}$ is the solution at the next RK stage that is obtained with the low-order method for node $k$.

We solve this problem until $t=1$ with $256 \times 256$ elements, the polynomial degree $N=3$, and we use the split-form DGSEM, the volume numerical flux of \citet{Chandrashekar2016}, and the LLF as the surface numerical flux.
Moreover, we use subcell FV schemes with first- and second-order reconstructions on primitive variables ($\rho$, $\vec{v}$, $p$, $\vec{B}$ and $\psi$) as the compatible low-order method.

We note that, since the selected low-order methods are guaranteed to be monotonic for the primitive variables and the density bounds are obtained from the low-order solutions in the FV stencil, \eqref{eq:IDPconditionOT} can always be satisfied by the hybrid DG/FV scheme.

\begin{figure}
\centering
\includegraphics[trim=0 1400 1450 0 ,clip,width=0.48\linewidth]{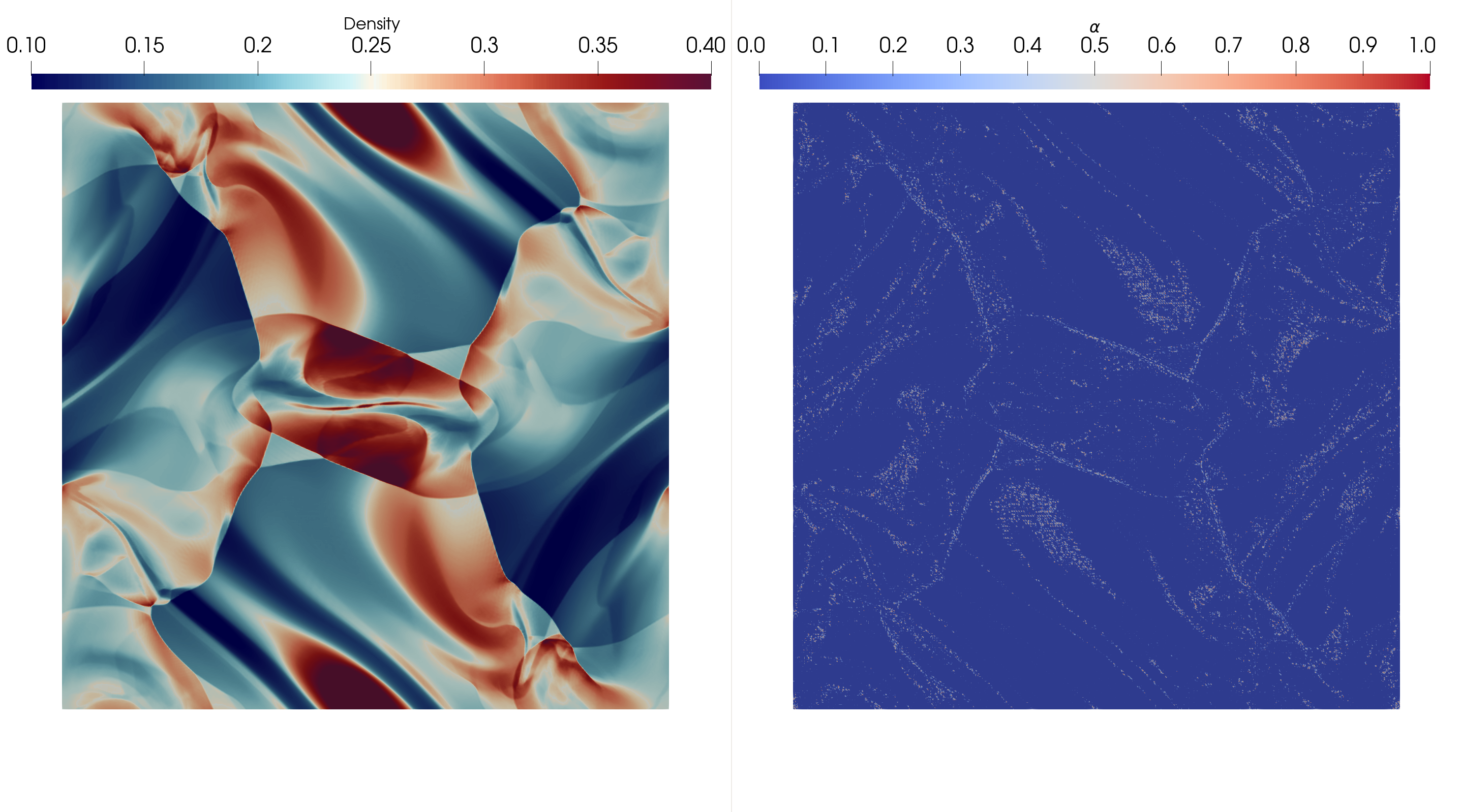}
\includegraphics[trim=1450 1400 0 0 ,clip,width=0.48\linewidth]{figs/OT/OT_1st_LLF_densL.png}
\begin{subfigure}[b]{\linewidth}
    \centering
	\includegraphics[trim=110 200 1550 200 ,clip,width=0.35\linewidth]{figs/OT/OT_1st_LLF_densL.png}
	\hspace{50pt}
	\includegraphics[trim=1550 200 110 200 ,clip,width=0.35\linewidth]{figs/OT/OT_1st_LLF_densL.png}
	\caption{ESDGSEM + first-order FV (LLF)}
	\label{fig:ot_subcell_05_1}
\end{subfigure}
\begin{subfigure}[b]{\linewidth}
    \centering
	\includegraphics[trim=110 200 1550 200 ,clip,width=0.35\linewidth]{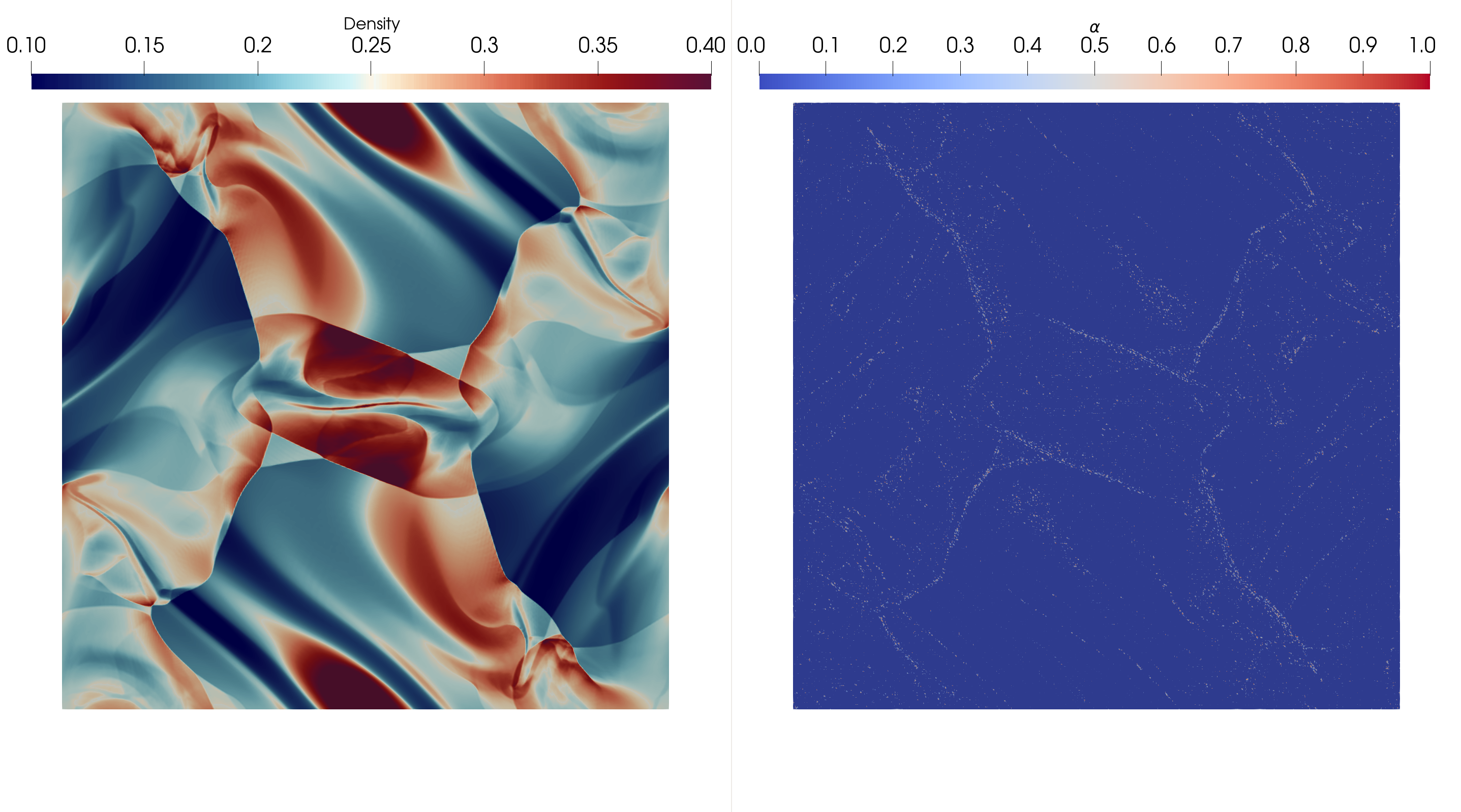}
	\hspace{50pt}
	\includegraphics[trim=1550 200 110 200 ,clip,width=0.35\linewidth]{figs/OT/OT_2nd_LLF_densL.png}
	\caption{ESDGSEM + second-order FV (LLF)}
	\label{fig:ot_subcell_05_2}
\end{subfigure}

\caption{Simulation results for the Orszag-Tang vortex simulation with subcell-wise convex blending at time $t = 0.5$.
We plot the density, $\rho$, and the instant blending coefficient, $\alpha$, for two different schemes considered. DGSEM results with polynomial degree $N=3$ and $256\times 256$ elements.}
\label{fig:ot_subcell_05}
\end{figure}

The main results are shown in Figure~\ref{fig:ot_subcell_05} for time $t=0.5$ and in Figure~\ref{fig:ot_subcell_10} for time $t=1.0$.
Since the second-order FV method is less dissipative than its first-order counterpart, the bounds that it imposes are less strict, which leads to less limiting (as can be observed clearly in Figure~\ref{fig:ot_subcell_05}).
As a result, the scheme that uses second-order limiting is less dissipative and allows the appearance of more turbulent structures, as can be observed in Figure~\ref{fig:ot_subcell_10}.

\begin{figure}
\centering
\includegraphics[trim=0 1400 1450 0 ,clip,width=0.48\linewidth]{figs/OT/OT_1st_LLF_densL.png}
\includegraphics[trim=1450 1400 0 0 ,clip,width=0.48\linewidth]{figs/OT/OT_1st_LLF_densL.png}
\begin{subfigure}[b]{\linewidth}
    \centering
	\includegraphics[trim=110 200 1550 200 ,clip,width=0.35\linewidth]{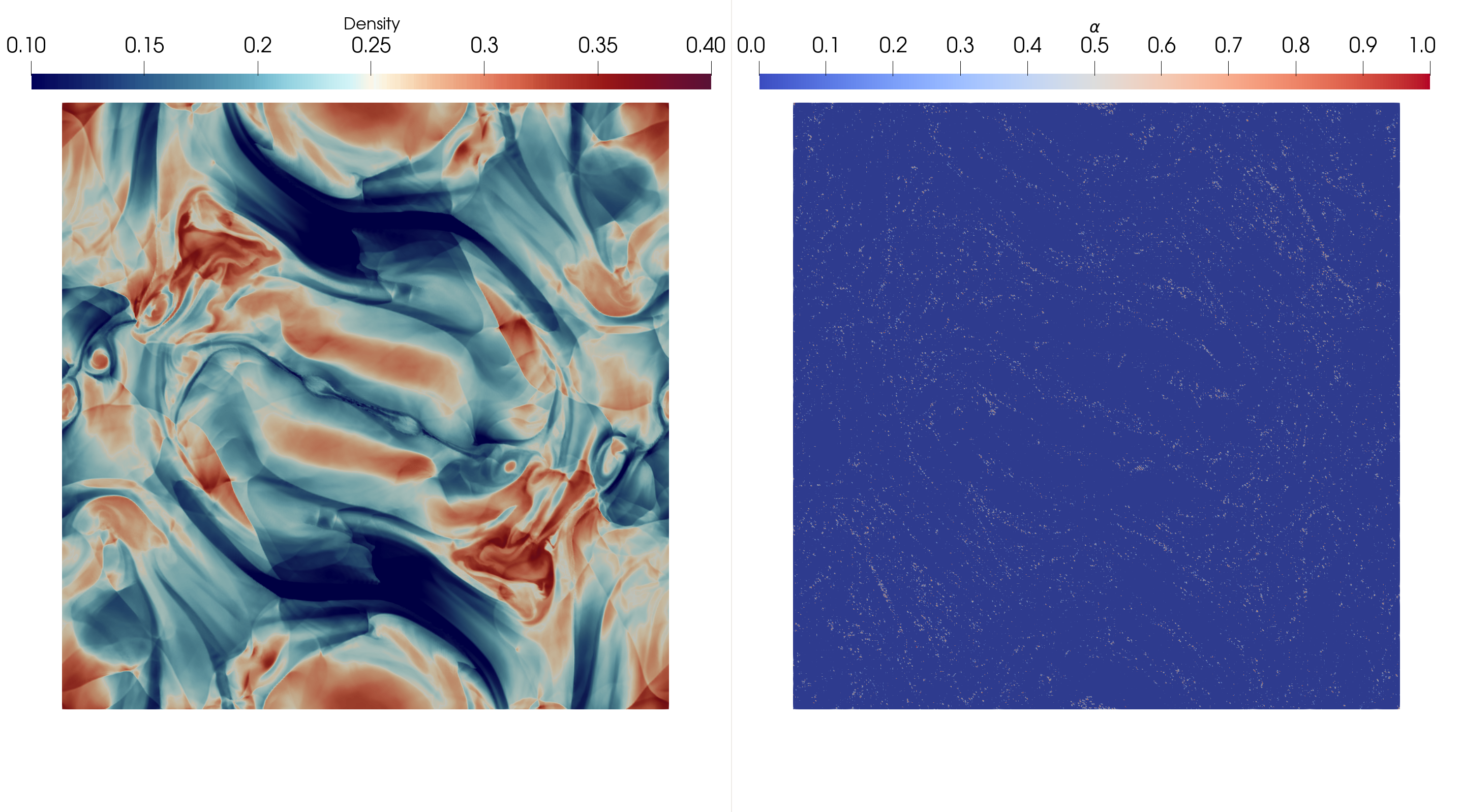}
	\hspace{50pt}
	\includegraphics[trim=1550 200 110 200 ,clip,width=0.35\linewidth]{figs/OT/OT_1st_LLF_densL_final.png}
	\caption{ESDGSEM + first-order FV (LLF)}
	\label{fig:ot_subcell_10_1}
\end{subfigure}
\begin{subfigure}[b]{\linewidth}
    \centering
	\includegraphics[trim=110 200 1550 200 ,clip,width=0.35\linewidth]{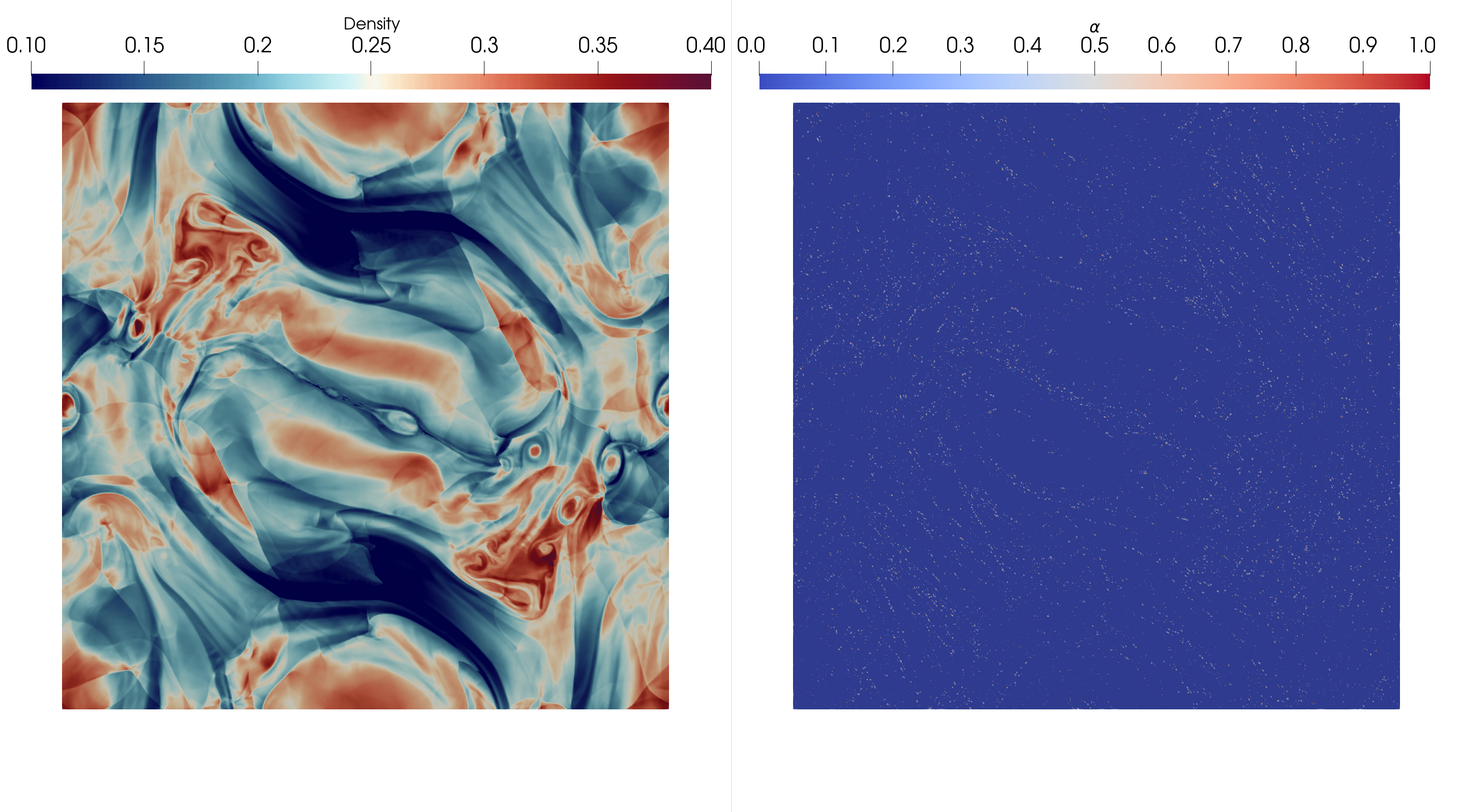}
	\hspace{50pt}
	\includegraphics[trim=1550 200 110 200 ,clip,width=0.35\linewidth]{figs/OT/OT_2nd_LLF_densL_final.png}
	\caption{ESDGSEM + second-order FV (LLF)}
	\label{fig:ot_subcell_10_2}
\end{subfigure}

\caption{Simulation results for the Orszag-Tang vortex simulation with subcell-wise convex blending at time $t = 1.0$.
We plot the density, $\rho$, and the instant blending coefficient, $\alpha$, for two different schemes considered. DGSEM results with polynomial degree $N=3$ and $256\times 256$ elements.}
\label{fig:ot_subcell_10}
\end{figure}

\section{Conclusions}\label{sec:conclusion}

In this paper, we discuss strategies for subcell limiting of high-order DG methods based on convex blending on curvilinear meshes.
We first enumerate the four components necessary to formulate the method.
The main ingredients are high-order DGSEM and a compatible subcell FV scheme that evolves the same degrees of freedom as the high-order scheme.
The high-order scheme admits split-formulations that can be, for instance, kinetic energy preserving or entropy consistent.
The low-order scheme allows for the application of a large number of standard techniques from the FV literature, e.g., allowing flexibility in the choice of approximate Riemann solver and the type of slope reconstruction.
One further needs to decide whether an entire element is blended, or if the nodal values within an element are individually blended.
While element-wise blending is simpler and allows for a global entropy estimate of the hybrid discretization, it is also typically more dissipative than the subcell blending.
Lastly, suitable indicators to compute the blending coefficients are necessary.
Options range from simple and cheap heuristic options based on modal energy estimates as troubled cell indicators up to FCT-like a posteriori approaches that guarantee positivity of density and pressure and preservation of convex invariants throughout the duration of the simulation.

By choosing specific options for these ingredients, we have demonstrated that existing limiting approaches from the literature may be recovered using the unified framework.
In addition to recovering existing versions, the unified point of view allows to further mix and match the four ingredients to obtain shock-capturing and positivity-preserving methods tailored to the needs of the user.
Some of the resulting combinations allow for strong theoretical statements, e.g., provable entropy dissipation, provable positivity of density and pressure, as well as other properties of invariant domain preservation.
Other combinations depend more on heuristic choices, but empirical results suggest that they may substantially improve real-world performance in terms of amount of dissipation added to the discretization and/or computational complexity.

To investigate the versatility of the convex blending strategies in the context of the unified framework, we consider several challenging test cases from the literature and demonstrate the performance of the resulting high-order DGSEM discretizations. For instance, we demonstrate that for the KPP problem (featuring a non-convex flux function), it is beneficial to use an entropic high-order DGSEM in order to guarantee convergence to the unique entropy solution. We also demonstrated that it is possible to robustly simulate a challenging Mach 2000 jet flow inspired by an astrophysical setup and maintain positivity of the solution without excessive dissipation. Lastly, we also show that the strategies can be applied for more complex hyperbolic systems, such as the ideal MHD system with divergence cleaning.

\section*{Acknowledgments}

This work has received funding from the European Research Council through the ERC Starting Grant “An Exascale aware and Uncrashable Space-Time-Adaptive Discontinuous Spectral Element Solver for Non-Linear Conservation Laws” (Extreme), ERC grant agreement no. 714487 (Gregor J. Gassner and Andrés Rueda-Ramírez). This work was
performed on the Cologne High Efficiency Operating Platform for Sciences (CHEOPS) at the Regionales Rechenzentrum K\"oln (RRZK) and on the group cluster ODIN.
We thank RRZK for the hosting and maintenance of the clusters.
This work was performed under the auspices of the U.S.\ Department of Energy by Lawrence Livermore National Laboratory under Contract DE-AC52-07NA27344 (LLNL-JRNL-831293).

\bibliographystyle{model1-num-names}
\bibliography{Biblio}

\end{document}